\documentclass[a4paper]{amsart} 
\usepackage{amsmath, amsthm, amssymb}
\usepackage[margin=2.5cm]{geometry}
\usepackage[english]{babel}
\usepackage{amsfonts}
\usepackage{mathtools}
\usepackage{nccmath}
\usepackage{stackengine}
\usepackage{setspace}
\usepackage[T1]{fontenc}
\usepackage{bm}
\usepackage{relsize}
\usepackage{pgfplots}
\usepackage{accents}
\usepackage{enumitem}
\usepackage{scalerel}
\usepackage{cancel}
\usepackage{systeme}
\usepackage{stackrel}
\usepackage{xfrac}
\usepackage{extarrows}
\usepackage{graphicx}
\graphicspath{{pictures/}}
\usepackage{blkarray}
\usepackage{bigstrut}
\usepackage{braket}
\usepackage{setspace}
\usepackage{cases}
\usepackage{ctable} 

\def \lptext {u}
\newcommand{\Ae}{a\text{.}e\text{.}\ }
\usepackage[mathscr]{euscript}
\usepackage{tikz}
\usetikzlibrary{patterns}
\usetikzlibrary{arrows.meta}
\usepackage{romanbar}
\theoremstyle{plain}
\newtheorem{thm}{Theorem}[section]

\newtheorem{lemma}[thm]{Lemma}

\newtheorem{claim}[thm]{Claim}
\theoremstyle{definition}
\newtheorem{deff}[thm]{Definition}

\newtheorem*{note}{Note to the reader}
\theoremstyle{remark}
\newtheorem{rmk}[thm]{Remark}

\theoremstyle{plain}
\newtheorem{thm*}{Theorem}
\newtheorem{lemma*}{Lemma}

\DeclareMathOperator{\Div}{\mathrm{div}}
\DeclareMathOperator{\N}{\mathbb{N}}

\DeclareMathOperator{\R}{\mathbb{R}}

\DeclareMathOperator{\ee}{\mathrm{e}}

\newcommand{\HH}{H^1_0}

\newcommand{\dex}{\,\mathrm{d}x}

\newcommand{\Def}{\vcentcolon=}

\newcommand{\ssubset}{\subset\joinrel\subset}

\newcommand{\mathtitle}[1]{\texorpdfstring{#1}{\lptext{}}}
\newcommand{\RNum}[1]{\uppercase\expandafter{\romannumeral #1\relax}}

\pgfplotsset{compat=1.18}
\keywords{singular equations; regularizing effects; a priori estimates; positive solutions; Schauder's fixed point theorem}
\title{Regularizing effects for an elliptic system of singular equations}
\author{Gabriele Giannone}
\address{Dipartimento di Matematica e Informatica “U. Dini”, Università di Firenze, Viale Morgagni 67/A, 50134 Firenze, Italy, e-mail: gabriele.giannone@unifi.it}
\subjclass[2020]{35J75; 35B45; 35J57; 35B09}
\usepackage[autostyle,italian=guillemets]{csquotes}
\usepackage[style=alphabetic,backend=biber]{biblatex}
\addbibresource{biblio.bib}
\usepackage[a-1b]{pdfx}
{\left\lbrace\begin{array}{@{}l@{}}}%
{\end{array}\right.}
\usepackage[a-1b]{pdfx}
\begin{document}
\begin{abstract}
A system of two singular semi--linear elliptic equations, patterned after the Schr\"odinger-Maxwell system, is considered. If the reaction term of the first equation contains a datum $f\in L^m$, existence of positive solutions with finite energy is established for suitable ranges of $m$. In particular, the results from the theory of elliptic single equations are improved. At the same time, thanks to an approach based on approximation schemes and a priori estimates on the approximated sequences of solutions, it is shown that the integrability assumptions on the datum produce higher integrability of the solutions.
\end{abstract}
\maketitle
\section{Introduction}
Let $\Omega$ be a bounded domain in $\R^d$, $d\ge3$.
In this paper, we are concerned with the doubly singular semi--linear system
\begin{equation}\label{second_system}
\begin{cases}
\displaystyle -\Div(A(x) D  u)+v^{1-\theta} u^{r-1}=\frac{f(x)}{u^\gamma} &\mbox{in}\ \Omega\\
\displaystyle-\Div(A(x) D  v)=\frac{u^r}{v^{\theta}} &\mbox{in}\ \Omega\\
v>0,\, u\ge0 &\mbox{in}\ \Omega\\
u>0 &\mbox{in}\ \{f>0\}\\
u=v=0 &\mbox{on}\ \partial\Omega.
\end{cases}
\end{equation}
Here, $f$ is a non-negative and non-trivial function belonging to a suitable Lebesgue space, and $A(\cdot)\in L^\infty(\Omega;\R^{d\times d})$ is a symmetric and uniformly elliptic matrix, in the sense that
\begin{equation}\label{ellipticity}
\alpha|\xi|^2\le A(x)\xi\cdot\xi\le\beta|\xi|^2
\end{equation}
for \Ae $x\in\Omega$ and every $\xi\in\R^d$, with $0<\alpha\le\beta$. Furthermore, we make the following assumption on the parameters:
\begin{equation}\label{main assumption on parameters}
r\ge2,\quad \gamma\in(0,1),\quad {\theta}\in[0,1).
\end{equation}
We examine how the structure of the system and the integrability of the datum $f$ give rise to regularizing effects on the existence of solutions $(u,v)\in\HH(\Omega)\times\HH(\Omega)$ having higher integrability than that prescribed by the theory of elliptic (single) equations.
\subsection{State of the art} System \eqref{second_system} is patterned after the following Schr\"odinger-Maxwell system
\begin{equation}
\begin{cases}\label{Benci_Fortunato_system}
-\frac{1}{2}\Delta u+\phi u=\omega u\\
-\Delta \phi= 4\pi u^2
\end{cases}
\end{equation}
that appeared first in the seminal work \cite{Benci_Fortunato_1998}, where V. Benci and D. Fortunato studied the eigenvalue problem for the Schr\"odinger operator when coupled with an electromagnetic field.\\
In our case, the problem is \emph{doubly singular}, namely, it is a Dirichlet problem where both equations are singular. The literature about singular problems is wide and well established. Without the intention to be exhaustive, we recall some of the most known works in the field, referring the reader to the surveys \cite{guarnotta2022,Guarnotta2023,oliva2024singularellipticpdesextensive} for more information.

First, we focus on problems with just one equation. In \cite{Lazer1991OnAS, BoccardoOrsina2010} the authors studied existence of solutions (both with finite and infinite energy) to equations modelled by
$$\begin{cases}
\displaystyle-\Delta u=\frac{f}{u^\gamma} &\mbox{in}\ \Omega\\
u=0  &\mbox{on}\ \partial\Omega.
\end{cases}$$
In particular, \cite{Lazer1991OnAS} proved the existence of a unique solution $0<u\in C^{2,\alpha}(\Omega)\cap C(\overline{\Omega})$, provided that $\partial\Omega\in C^{2,\alpha}$ and $0<f\in C^{\alpha}(\overline{\Omega})$; moreover, it was shown that $u$ belongs to $H^1_0(\Omega)$ iff $\gamma<3$. On the other hand, \cite{BoccardoOrsina2010} dealt with existence, uniqueness and regularity results of distributional solutions, when the non-negative datum $f$ is either an $m$-integrable function or a Radon measure concentrated on a Borel set of zero capacity. The authors proved that the unique solution $u$ belongs to $H^1_0(\Omega)$ both when $\gamma<1$ and $f\in L^{(\frac{2^\star}{1-\gamma})'}(\Omega)$, and when $\gamma=1$ and $f\in L^1(\Omega)$.

Paper \cite{OlivaPetitta2018} considered Dirichlet problems of the form
$$\begin{cases}
\displaystyle-\Div(A(x) Du)=h(u)\mu &\mbox{in}\ \Omega,\\
u=0  &\mbox{on}\ \partial\Omega,
\end{cases}$$
assuming that $\Omega$ is smooth, $A$ has Lipschitz continuous coefficients, and $\mu$ is a Radon measure (not necessarily concentrated on a set of zero harmonic capacity). Here, $h(s)$ is a non-negative function defined on $[0,+\infty)$, controlled from above by $s^{-\gamma}$ near zero. Both for $\gamma\le1$ and for $\gamma>1$, the authors proved that the problem has a solution in a suitable distributional sense, which is unique when $h$ is decreasing. In particular, when $h$ is controlled from above by $s^{-\theta}$ near infinity, with $\theta>0$, and $\mu$ is a non-negative measurable function $f$, the authors showed that the problem as a solution in $\HH(\Omega)$ in the cases: $\gamma\le 1$, $\theta\ge1$, and $f\in L^1(\Omega)$; $\gamma\le 1$, $\theta>0$, $f\in L^1(\Omega)\cap L^p(\Omega\smallsetminus \overline{\Omega_\varepsilon})$ and $f(x)\le \frac{C}{\delta(x)}$ \Ae in $\Omega_\varepsilon$, where $p>\frac{d}{2}$, $\delta(x)=\mathrm{dist}(x,\partial\Omega)$, and $\Omega_\varepsilon=\{x\in\Omega : \delta(x)<\varepsilon\}$; $f\in L^m(\Omega)$, with $m>1$, $1<\gamma<2-\frac{1}{m}$, and $\theta\ge1$. Moreover, when $\gamma>1$, $h(u)=u^{-\gamma}$, and $f$ belongs to $L^m(\Omega)$, $m>1$, the authors proved that the solution $u$ belongs to $\HH(\Omega)$ iff $\gamma<3-\frac{2}{m}$, thus extending the previously quoted result in \cite{Lazer1991OnAS}.

In \cite{Oliva2019}, the author proved existence of solutions to problems whose model is
$$\begin{cases}
\displaystyle-\Delta_p u+u^q=h(u)f &\mbox{in}\ \Omega,\\
u\ge0  &\mbox{in}\ \Omega,\\
u=0  &\mbox{on}\ \partial\Omega,
\end{cases}$$
where $1\le p<d$, $f$ belongs to suitable Lebesgue spaces, and $h(s)$ is a non-negative function defined on $[0,+\infty)$, controlled from above by $s^{-\gamma}$ near zero, and by $s^{-\theta}$ near infinity. The author studied the regularizing effect of the absorption term, establishing existence of finite energy solutions (i.e., $u\in W^{1,p}_0(\Omega)$ if $p>1$, and $u\in BV(\Omega)$ if $p=1$) in the cases: $\gamma\le 1$, $\theta\ge1$, and $f\in L^1(\Omega)$; $\gamma\le 1$, $\theta<1$, $f\in L^m(\Omega)$, with $m>1$, and $q\ge\frac{1-m\theta}{m-1}$.

In the last twenty years, many papers have been concerned with various kinds of problems bearing a resemblance to \eqref{Benci_Fortunato_system}. Given $B>0$, $r>1$, $0\le f\in L^s(\Omega)$, and $A$ and $M$ measurable uniformly elliptic matrices, the papers \cite{Boccardo2016SpiritofBenciandFortunato,BoccardoOrsina2016} studied several regularizing effects of elliptic semi--linear systems of the type
$$\begin{cases}
-\Div(A(x) D  u)+B v u^{r-1} =f &\mbox{in}\ \Omega\\
-\Div(M(x) D  v)=u^r &\mbox{in}\ \Omega\\
u,v\ge0 &\mbox{in}\ \Omega\\
u=v=0 &\mbox{on}\ \partial\Omega
\end{cases}$$
which, when $M=A$, correspond to \eqref{second_system} for $\gamma=\theta=0$. Under the same assumptions, in \cite{Durastanti_2019} it was proved the existence of finite-energy solutions to the quasi--linear system
$$\begin{cases}
-\Delta_p u + B v^{\tau+1}|u|^{r-2}u=f &\mbox{in}\ \Omega\\
-\Delta_p v =|u|^r v^{\tau} &\mbox{in}\ \Omega\\
v\ge0 &\mbox{in}\ \Omega\\
u=v=0 &\mbox{on}\ \partial\Omega
\end{cases}$$
where $1<p<d$ and $0\le\tau<p-1$, which for $p=2$ is case of \eqref{second_system} for $\gamma=0$ and $1-p<\theta\le0$. For $q\in\{2,p\}$, the authors considered both data $f$ in the dual space (of the first equation), that is, $s\ge (q^\star)'$, and data outside the dual space, assuming some lower bound on $s$ depending from $r$. Among the considered cases, we recall those more related to the present paper: $s>\frac{d}{q}$, with solutions in $L^\infty(\Omega)$; $s\ge (q^\star)'$, with $u\in L^{\frac{ds(q-1)}{d-ps}}$; in \cite{Boccardo2016SpiritofBenciandFortunato}, $2\le s\le \frac{d}{2r'}$, with $u\in L^s(\Omega)$; in \cite{BoccardoOrsina2016}, $r'\le s<(2^\star)'$, with $u\in L^{s(r-1)}(\Omega)$; in \cite{Durastanti_2019}, ${\tau}=0$ and $s\ge(r+1)'$, with $u\in L^\alpha(\Omega)$, $\alpha=\frac{s(qr+q-1)}{s(q-1)+1}$.

In \cite{DeCaveOlivaStrani2016}, the authors studied the existence of solutions to the following non-variational singular system
$$\begin{cases}
-\Delta_p u = a(x)v^{\rho}u^{r-1} &\mbox{in}\ \Omega\\
-\Delta_p v = b(x)u^r v^{\rho-1} &\mbox{in}\ \Omega\\
u,v>0 &\mbox{in}\ \Omega\\
u=v=0 &\mbox{on}\ \partial\Omega
\end{cases}$$
which, disregarding the term $fu^{-\gamma}$, is case of \eqref{second_system} for $a=b=1$ and $\theta-1=-\rho$. In this non-variational setting, under the assumptions $0\le a\in L^\infty(\Omega)$, $0<\beta\le b\in L^m(\Omega)$, $m>\frac{d}{p}$, $0<\rho<1$, $0<r<p-\rho$, and $\rho+1<p<d$ with $p\neq1+\sqrt{r\rho}$, the authors established the existence of a solution in $(W^{1,p}_0(\Omega)\cap L^\infty(\Omega))^2$. Note that this set of assumptions allows both equations to be (mildly) singular. Let us also mention that, when $a$ and $b$ are constant, and replacing the $p$-laplacian is replaced by a second-order linear uniformly elliptic operator, the problem was treated in \cite{BoccardoOrsina2011} with variational techniques.

Singular nonlinearities for Schr\"odinger-Maxwell systems were first considered in \cite{BOCCARDO2022126490}, concerned with the elliptic problem
\begin{equation*}
\begin{cases}
\displaystyle -\Div(A(x) D  u)+v u^{r-1}=\frac{1}{u^\gamma} &\mbox{in}\ \Omega\\
-\Div(M(x) D  v)=u^r &\mbox{in}\ \Omega\\
u,v>0 &\mbox{in}\ \Omega\\
u=v=0 &\mbox{on}\ \partial\Omega
\end{cases}
\end{equation*}
where both mild and strong singularities were considered. Moreover, in the mild case $\gamma\in(0,1]$ the condition $r>1-\gamma$ was imposed, allowing the nonlinear term $u^{r-1}$ to be singular as well. When $M=A$ and $\gamma\in(0,1)$, this corresponds to \eqref{second_system} for $f=1$, ${\theta}=0$.

In \cite{LiuMosconi2020689} the reduction procedure of Benci--Fortunato was exploited to study the stationary Schr\"odinger-Poisson system
\begin{equation*}
\begin{cases}
-\Delta u + V u+\phi u= |u|^{p-1}u\\
-\Delta \phi =u^2.
\end{cases}
\end{equation*}
It is worth to notice that, when $p=3$, the first equation above describes the dynamic of standing waves of a Bose-Einstein condensate of electrically charged particles, under the assumption that the wave function of the condensate $\psi\colon\R^3\times[0,+\infty)\to\mathbb{C}$ writes as $\psi(x,t)=\ee^{-i\omega t}u(x)$, with $u\colon\R^3\to\R$. The authors proved existence of non-trivial solutions when the potential $V$ is coercive and sign-changing, and they allowed the nonlinear term to be $3$-sublinear, filling the gap with the existing literature, where the $3$-superlinear case only was considered.

Finally, in \cite{BoccardoOrsina2024}, the authors established some regularizing effects for the singular elliptic system
\begin{equation*}
\begin{cases}
\displaystyle -\Div(A(x) D  u)+v^{1-\theta} u^{r-1}=f &\mbox{in}\ \Omega\\
\displaystyle-\Div(M(x) D  v)=\frac{u^r}{v^{\theta}} &\mbox{in}\ \Omega\\
u,v>0 &\mbox{in}\ \Omega\\
u=v=0 &\mbox{on}\ \partial\Omega
\end{cases}
\end{equation*}
where $0<\theta<1$, which for $M=A$ corresponds to \eqref{second_system} for $\gamma=0$.
\subsection{Organization of the paper} The paper is organized as follows: in Section \ref{main results}, we summarize our main results; in Section \ref{notations and preliminaries} we set some notation and we recall some basic results in elliptic theory; in Section \ref{Approximation scheme} existence of an approximating sequence of solutions is established studying an approximation of \eqref{second_system}; Sections \ref{regular data}, \ref{data dual space}, \ref{data outside dual space}, \ref{A higher integrability result} are then devoted to the proof of the main results.
\section{Main results}\label{main results}
In what follows, unless explicitly stated, let $\Omega\subset\R^d$ be an open and bounded domain, let \eqref{ellipticity} and \eqref{main assumption on parameters} be in force and let $f$ belong to $L^m(\Omega)$ for some $m\ge1$. Moreover, given $1\le p\le\infty$, we denote by $p'$ and $p^\star$ the H\"older and the Sobolev conjugates of $p$, respectively, namely
$$p'\Def\begin{cases}
\displaystyle\frac{p}{p-1} &\mbox{if}\ 1<p<\infty\\
\infty &\mbox{if}\ p=1\\
1 &\mbox{if}\ p=\infty
\end{cases}\qquad p^\star\Def\begin{cases}
\displaystyle\frac{dp}{d-p} &\mbox{if}\ p<d\\
\text{any}\ q\in[1,\infty) &\mbox{if}\ p=d\\
\infty &\mbox{if}\ p>d.
\end{cases}$$

First, we address the case $m\ge\frac{d}{2}$. Let us precisely define what kind of solutions we are interested in.
\begin{deff}[Weak solutions]\label{def weak solution}
We say that $(u,v)\in\HH(\Omega)\times\HH(\Omega)$ is a \textit{weak solution} to system \eqref{second_system} if $u,v>0$ \Ae in $\Omega$,
\begin{gather}\label{integrability of reactions weak solutions}
\frac{f}{u^\gamma}\varphi,\,\frac{u^r}{v^{\theta}}\varphi\in L^1(\Omega)\quad\forall\varphi\in\HH(\Omega),
\end{gather}
and
$$\begin{cases}
\displaystyle\int\limits_\Omega A D  u\cdot D  \varphi+ v^{1-\theta} u^{r-1}\varphi\dex=\int\limits_\Omega \frac{f}{u^\gamma}\varphi\dex&\ \ \forall\varphi\in \HH(\Omega)\\[10pt]
\displaystyle\int\limits_\Omega A D  v\cdot D  \psi\dex=\int\limits_\Omega \frac{u^r}{v^{\theta}}\psi\dex&\ \ \forall\psi\in \HH(\Omega).
\end{cases}$$
\end{deff}
The main result of Section \ref{regular data} is the following
\begin{thm}\label{existence m>d/2}
If $m\ge\frac{d}{2}$, then system \eqref{second_system} has a weak solution $(u,v)$, such that $u$ belongs to $L^\infty(\Omega)$ if $m>\frac{d}{2}$, and to every $L^p(\Omega)$, $1\le p<\infty$, if $m=\frac{d}{2}$. Moreover, $v$ belongs to $L^\infty(\Omega)$.
\end{thm}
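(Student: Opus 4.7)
The plan is to start from the approximating sequence $(u_n,v_n)\in\HH(\Omega)\times\HH(\Omega)$ of solutions to the regularized version of \eqref{second_system} constructed in Section \ref{Approximation scheme}, and to derive uniform estimates strong enough to pass to the limit in the weak formulation of Definition \ref{def weak solution}.

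The first step is a Stampacchia-type integrability bound on $u_n$. Testing the first approximate equation with $G_k(u_n)=(u_n-k)_+$, the absorption term $v_n^{1-\theta}u_n^{r-1}G_k(u_n)$ is nonnegative and may be discarded; on $\{u_n>k\}$ with $k\ge 1$ one has $u_n^{-\gamma}\le 1$, so ellipticity yields
$$\alpha\int_\Omega|\nabla G_k(u_n)|^2\dex\le\int_{\{u_n>k\}}f\,G_k(u_n)\dex.$$
Combining Hölder (using $f\in L^m$) with the Sobolev inequality on the left-hand side reduces the estimate to the geometric decay of the level sets required by Stampacchia's lemma: this gives $\|u_n\|_{L^\infty}\le C$ when $m>\frac{d}{2}$, and a uniform $L^p$ bound for every $p<\infty$ when $m=\frac{d}{2}$.

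The second step transfers the regularity to $v_n$. In either regime, $u_n^r$ is uniformly bounded in $L^p$ for some $p>\frac{d}{2}$, so the datum $u_n^r/v_n^\theta$ of the second approximate equation, tested with $G_k(v_n)$, is controlled on $\{v_n>k\}$ by $u_n^r/k^\theta\in L^p$; a second Stampacchia argument then produces a uniform $L^\infty$ bound on $v_n$. Uniform $\HH(\Omega)$ estimates on both components follow by testing each equation with $u_n$ and $v_n$ themselves, exploiting the absorption term, the $L^\infty$ bound on $v_n$, and the inclusion $f\in L^m\subset L^{(2^\star)'}$, which holds since $m\ge\frac{d}{2}\ge(2^\star)'$ for $d\ge 3$. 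Up to subsequences, $u_n\rightharpoonup u$ and $v_n\rightharpoonup v$ weakly in $\HH(\Omega)$, strongly in every $L^q$ with $q<2^\star$, and a.e.\ in $\Omega$.

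The main obstacle will be passing to the limit in the singular right-hand sides, which a priori requires preventing $u_n$ and $v_n$ from degenerating to zero. The essential ingredient, to be provided by the approximation section, is a local non-degeneracy statement: for every compact $K\subset\Omega$ there exists $c_K>0$, independent of $n$, with $v_n\ge c_K$ on $K$, and analogously $u_n\ge c_K$ on every compact $K\subset\{f>0\}$. These bounds are obtained by comparing $u_n,v_n$ through the weak comparison principle with solutions of auxiliary Dirichlet problems driven by nonnegative bounded sources, and they are inherited in the limit. Once available, dominated convergence applied to $f/u_n^\gamma\to f/u^\gamma$ (supported in $\{f>0\}$) and to $u_n^r/v_n^\theta\to u^r/v^\theta$, combined with dominated convergence in $L^\infty$ for the absorption term $v_n^{1-\theta}u_n^{r-1}\to v^{1-\theta}u^{r-1}$, yields the weak formulation for every $\varphi,\psi\in\HH(\Omega)$. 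The integrability conditions \eqref{integrability of reactions weak solutions} and the strict positivity required by Definition \ref{def weak solution} then follow from the uniform bounds and the preserved local lower bounds.
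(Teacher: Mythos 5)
The first two steps of your proposal (uniform $L^{p_m}$ bound on $u_n$ via Stampacchia, transfer to a uniform $L^\infty$ bound on $v_n$, energy bounds in $\HH(\Omega)$) match the paper's proof. The gap is in the third step.

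You invoke, as something ``to be provided by the approximation section,'' a uniform local non-degeneracy bound: $v_n\ge c_K$ on every compact $K\subset\Omega$ and $u_n\ge c_K$ on every compact $K\subset\{f>0\}$, with $c_K$ independent of $n$. No such statement is established in Section~\ref{Approximation scheme}. Lemmas~\ref{approximated solution eq 1} and~\ref{approximation lemma} do show $u_n,v_n>0$ a.e., but through the weak Harnack inequality applied with the $n$-dependent $L^\infty$ bound $\|u_n\|_{L^\infty}\le Cn^{1+\gamma}$, so the resulting lower bounds depend on $n$. Proving a uniform local lower bound is not a routine appeal to the comparison principle here: the first equation's absorption coefficient $v_n^{1-\theta}u_n^{r-1}$ itself changes with $n$, and the monotonicity trick used for single singular equations (showing $u_n$ increasing and bounding below by $u_1$) does not transfer directly to the coupled system because $v_n$ also varies. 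Without this input, the final passage to the limit as you describe it does not go through. And even granting it, dominated convergence for $\int_\Omega f_n(u_n+1/n)^{-\gamma}\varphi$ needs a majorant integrable over all of $\{f>0\}$, not merely over compacts $K\ssubset\{f>0\}$, since $\{f>0\}$ may touch $\partial\Omega$.

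The paper circumvents both issues by a different mechanism: it first proves positivity of the \emph{limit} functions only, via Fatou's lemma (to get $u>0$ a.e.\ in $\{f>0\}$ and $g\varphi\in L^1$) followed by the weak Harnack inequality applied to $u$ (legitimate because $v^{1-\theta}u^{r-2}\in L^s$ for some $s>\tfrac d2$ thanks to the boundedness of $v$ and the integrability of $u$), and similarly for $v$. This yields the super-solution inequalities. The matching sub-solution inequalities are then obtained by choosing the test function $\tfrac{u_n}{u+\varepsilon}\varphi$ (and $\tfrac{v_n}{v+\varepsilon}\psi$), exploiting weak lower semicontinuity of the Dirichlet energy, dominated convergence in $n$ against the fixed majorant $fu^{-\gamma}\varphi\in L^1$, and finally Fatou in $\varepsilon\to0$. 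None of this requires any lower bound on $u_n$ or $v_n$. You should either supply a proof of your claimed uniform non-degeneracy (which seems hard) or switch to the Fatou/weak-Harnack argument on the limit and the $\tfrac{u_n}{u+\varepsilon}$ test function device.
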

Here, the boundedness of solutions is obtained via Theorem \ref{Stampacchia_theorem} below, which we can apply thanks to the integrability assumption on $f$ (see Section \ref{overview of the proofs} for more details). When $m>\frac{d}{2}$, Theorem \ref{existence m>d/2} recovers the boundedness results in \cite{DeCaveOlivaStrani2016} for non-variational singular elliptic systems, in \cite{BoccardoOrsina2010} for singular elliptic equations, and in \cite{Boccardo2016SpiritofBenciandFortunato,Durastanti_2019,BOCCARDO2022126490,BoccardoOrsina2024} for systems of Schr\"odinger-Maxwell type where (at least) one of the two equations is non-singular.

Then, assuming $m<\frac{d}{2}$, we study some regularizing effects of the system, as done in \cite{Boccardo2016SpiritofBenciandFortunato,BoccardoOrsina2016,BoccardoOrsina2018,Durastanti_2019} for non-singular problems. With this aim, in what follows, if $u>0$ \Ae in $\{f>0\}$, we define
\begin{equation}\label{definition of function g}
g(x,u)=\begin{cases}
\displaystyle\frac{f(x)}{u^{\gamma}} &\mbox{in}\ \{f>0\}\\
0 &\mbox{otherwise}.
\end{cases}
\end{equation}
Clearly, $g(\cdot,u)=fu^{-\gamma}$ iff $u>0$ \Ae in $\Omega$.\\
To prove our results, we have to restrict the space of test functions for the first equation. More precisely, we give the following notion of solution.
\begin{deff}[Finite-energy solutions]\label{def finite-energy solutions}
We say that $(u,v)\in\HH(\Omega)\times\HH(\Omega)$ is a \textit{finite-energy solution} to system \eqref{second_system} if $v>0$ \Ae in $\Omega$, $u>0$ \Ae in $\{f>0\},
$\begin{gather}\label{integrability of reactions finite-energy solutions}
g(\cdot,u)\varphi\in L^1(\Omega)\ \forall\varphi\in\HH(\Omega)\cap L^\infty(\Omega)\qquad \text{and}\qquad\frac{u^r}{v^{\theta}}\psi\in L^1(\Omega)\ \forall\psi\in\HH(\Omega),
\end{gather}
and
$$\begin{cases}
\displaystyle\int\limits_\Omega A D  u\cdot D  \varphi+ v^{1-\theta} u^{r-1}\varphi\dex=\int\limits_\Omega g(\cdot,u)\varphi\dex&\ \ \forall\varphi\in \HH(\Omega)\cap L^\infty(\Omega)\\[10pt]
\displaystyle\int\limits_\Omega A D  v\cdot D  \psi\dex=\int\limits_\Omega \frac{u^r}{v^{\theta}}\psi\dex&\ \ \forall\psi\in \HH(\Omega).
\end{cases}$$
\end{deff}
\begin{rmk}
As it will be evident in the proofs of the main results, this definition will allow us to deal with the first equation even when \emph{no strong maximum principle} holds, that is, for all choices of $v$, $\theta$, and $r$.
\end{rmk}
At first, we consider the case $\big(\frac{2^\star}{1-\gamma}\big)'\le m < \frac{d}{2}$, when $f$ belongs to the \emph{dual space} of the first equation $L^{(\frac{2^\star}{1-\gamma})'}(\Omega)$. In Section \ref{data dual space} we prove:
\begin{thm}\label{existence data in dual space}
Let $\big(\frac{2^\star}{1-\gamma}\big)'\le m<\frac{d}{2}$. Then system \eqref{second_system} has a finite-energy solution $(u,v)$, with $u\in L^{m^{\star\star}(1+\gamma)}(\Omega)$. Moreover, if $r=2$, one has that $v\in L^{s_m}(\Omega)$, with
\begin{equation}\label{regularity v dual space}
s_m=\begin{cases}
\infty &\mbox{if}\ m>\frac{d}{3+\gamma},\\
\text{any } p\in[1,\infty) &\mbox{if}\ m=\frac{d}{3+\gamma},\\
\left(\medmath{\frac{m^{\star\star}(1+\gamma)}{2}}\right)^{\star\star}(1+\theta) &\mbox{if}\ \big(\frac{2^\star}{1-\gamma}\big)'\le m<\frac{d}{3+\gamma},
\end{cases}
\end{equation}
and that $u>0$ \Ae in $\Omega$ whenever
\begin{equation}\label{conditions strict positivity dual space}
\begin{cases}
\displaystyle d=3,4,5\\
\displaystyle d\ge 6\quad\text{and}\quad \theta> \frac{d-6}{d-2}\\
\displaystyle d\ge 6\quad\text{and}\quad m>\frac{1-\theta}{2(2-\theta+\gamma)}d\quad\text{and}\quad \theta\le\frac{d-6}{d-2}.
\end{cases}
\end{equation}
\end{thm}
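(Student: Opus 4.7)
The plan is to work with the approximating sequence $(u_n,v_n)$ constructed in Section~\ref{Approximation scheme} via Schauder's fixed point theorem, derive uniform a priori estimates in $\HH(\Omega)\times\HH(\Omega)$ together with the sharper $L^{m^{\star\star}(1+\gamma)}$ bound on $u_n$, pass to the limit in the approximated weak formulation, and finally establish the strict positivity of $u$ under the regimes listed in \eqref{conditions strict positivity dual space}.

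For the integrability bound on $u_n$, I would test the first approximated equation with a power of the form $(u_n+1/n)^{\alpha}-(1/n)^{\alpha}$, with $\alpha>0$ to be chosen. Dropping the non-negative absorption term $v_n^{1-\theta}u_n^{r-1}\varphi$, invoking \eqref{ellipticity}, applying the Sobolev inequality to $u_n^{(\alpha+1)/2}$, and using H\"older with exponents $(m,m')$ on the right-hand side lead to
\[
C\left(\int_\Omega u_n^{(\alpha+1)\cdot 2^\star/2}\dex\right)^{2/2^\star}\le \|f\|_{L^m(\Omega)}\left(\int_\Omega u_n^{(\alpha-\gamma)m'}\dex\right)^{1/m'}.
\]
Forcing the two exponents on $u_n$ to coincide fixes their common value to $(1+\gamma)m^{\star\star}$, and the assumption $m\ge(2^\star/(1-\gamma))'$ yields $\alpha\ge 1$, so the computation is legitimate and simultaneously produces the uniform $\HH(\Omega)$-bound on $u_n$.

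For $v_n$ in the case $r=2$, the previous estimate translates into $u_n^2$ being uniformly bounded in $L^p(\Omega)$ with $p=m^{\star\star}(1+\gamma)/2$. Testing the second approximated equation with an analogous power $(v_n+1/n)^\beta-(1/n)^\beta$ reproduces the same computation, with $p$ and $\theta$ in place of $m$ and $\gamma$, and produces the three regimes of \eqref{regularity v dual space}: Stampacchia's Theorem~\ref{Stampacchia_theorem} when $p>d/2$ (that is, $m>d/(3+\gamma)$), a bootstrap to every finite $L^q$ when $p=d/2$, and the balance $s_m=p^{\star\star}(1+\theta)$ when $p<d/2$. Up to subsequences, $(u_n,v_n)\rightharpoonup(u,v)$ weakly in $\HH(\Omega)\times \HH(\Omega)$ and a.e.\ in $\Omega$; the almost everywhere convergence of the gradients is then recovered through a Boccardo--Murat type argument, using truncations $T_k(u_n-u)$ and $T_k(v_n-v)$ as test functions and exploiting the monotonicity provided by the absorption term. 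The singular datum $fu_n^{-\gamma}$ is handled, for test functions $\varphi\in\HH(\Omega)\cap L^\infty(\Omega)$, by proving a local lower bound $u_n\ge c_\omega>0$ on every $\omega\ssubset\{f>0\}$ via the weak maximum principle applied to the first equation, which combined with Vitali's theorem yields the equi-integrability needed to pass to the limit in the sense of Definition~\ref{def finite-energy solutions}.

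The hardest step is upgrading $u>0$ on $\{f>0\}$ to $u>0$ a.e.\ in $\Omega$ under \eqref{conditions strict positivity dual space}. Rewriting the first equation as a linear problem $-\Div(A D u)+[v^{1-\theta}u^{r-2}]\,u=g(\cdot,u)$, the relevant Kato-type potential is $v^{1-\theta}u^{r-2}$, and its integrability is controlled through the bounds on $v$ from the previous step and through $u\in L^{m^{\star\star}(1+\gamma)}(\Omega)$. A Stampacchia/Brezis--Ponce type strong minimum principle applied to the resulting operator then propagates the strict positivity from $\{f>0\}$ to all of $\Omega$, provided this potential lies in a subcritical Lebesgue space: this is automatic in dimensions $d\le 5$; in higher dimension it holds when $\theta>(d-6)/(d-2)$ (since $v^{1-\theta}$ is then less singular), and is otherwise obtained from the stronger lower bound on $m$ appearing in the third subcase of \eqref{conditions strict positivity dual space}. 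The main obstacle is precisely this careful chain of summability checks, together with the identification of the weak limit as a finite-energy solution; the estimates above constitute, by contrast, a more mechanical sequence of Sobolev--H\"older computations.
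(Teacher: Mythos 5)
Your overall skeleton matches the paper's: approximate, derive $L^{m^{\star\star}(1+\gamma)}$ and energy bounds, pass to the limit, then upgrade $u>0$ on $\{f>0\}$ to $u>0$ a.e.\ via a strong minimum principle whose potential $v^{1-\theta}u^{r-2}$ must lie in $L^q$, $q>\frac{d}{2}$. The a priori estimates you sketch (testing with a power of $u_n$ and balancing exponents, then repeating on the second equation with $r=2$) are the same as Lemmas~\ref{a priori estimates on (un)} and~\ref{a priori estimates on (vn)}, modulo the exact form of the test function, and your positivity discussion reproduces the content of Step~10, including the split into the three cases of \eqref{conditions strict positivity dual space} via $s_m>\frac{d}{2}(1-\theta)$.

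The genuine gap is your proposed treatment of the singular term when passing to the limit. You claim a \emph{uniform} local lower bound $u_n\ge c_\omega>0$ on every $\omega\ssubset\{f>0\}$ via a weak maximum/Harnack principle. Two obstacles make this untenable here. First, the comparison arguments that give such a bound for single singular equations (\`a la \cite{BoccardoOrsina2010}) rely on $u_n$ being nondecreasing in $n$; the coupling $v_n^{1-\theta}u_n^{r-1}$ destroys this monotonicity, and the Harnack constants for $-\Div(A\,D\cdot)+v_n^{1-\theta}u_n^{r-2}$ degenerate as $n\to\infty$ because neither $v_n$ nor $u_n$ is uniformly bounded in $L^\infty$ when $m<\frac{d}{2}$. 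Second, $\{f>0\}$ is merely measurable; on $\omega\ssubset\{f>0\}$ the essential infimum of $f$ may be zero, so even pointwise the comparison with a barrier can fail. The paper does not prove any such bound: for the super-solution direction it uses Fatou's lemma to get $\int_\Omega g(\cdot,u)\varphi<\infty$, hence $u>0$ a.e.\ in $\{f>0\}$; for the sub-solution direction it chooses test functions of the form $H_{n,\varepsilon}=\frac{T_1(G_1(u_n))}{T_1(G_1(u))+\varepsilon}\varphi$ (and the analogous $\frac{T_1(v_n)}{T_1(v)+\varepsilon}\psi$ for the second equation). Since $T_1(G_1(u_n))=0$ on $\{u_n\le 1\}$, the singular factor $f_n(u_n+1/n)^{-\gamma}$ is bounded by $f$ wherever $H_{n,\varepsilon}\neq0$, so dominated convergence applies and no lower bound on $u_n$ is needed. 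You should replace your local-lower-bound step with this truncation device (or an equivalent cut-off that vanishes near $\{u_n\ \text{small}\}$).

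Two smaller remarks. Testing with $(u_n+1/n)^\alpha-(1/n)^\alpha$ and balancing exponents controls $\int u_n^{\alpha-1}|Du_n|^2$, not $\int|Du_n|^2$; to get the $\HH$-bound the paper tests separately with $u_n$ and $v_n$, and uses the structural cancellation $\int v_n^{1-\theta}u_n^r\dex$ between the absorption term of the first equation and the reaction of the second, so the two energies are jointly bounded by $\int f u_n^{1-\gamma}\dex$. Also, your Boccardo--Murat step implemented via $T_k(u_n-u)$, $T_k(v_n-v)$ is a legitimate alternative to the paper's direct use of Theorem~\ref{Boccardo-Murat} (bounding the right-hand sides in $\mathcal{R}(\Omega)$), but you should check that the truncation test functions are admissible given that the right-hand side of the first equation is singular; the paper's route avoids that check entirely.
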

\begin{rmk}\label{remark data dual space}
$(i)$ Let $\delta\in(0,1)$. By Theorem $5.2$ in \cite{BoccardoOrsina2010} and the strict monotonicity of $s\mapsto s^{-\delta}$ in $(0,+\infty)$, given $g\in L^p(\Omega)$, $p\ge \big(\frac{2^\star}{\delta}\big)'$, there exists a unique weak solution $w\in \HH(\Omega)$ to the Dirichlet problem
\begin{equation*}
\begin{cases}
\displaystyle-\Div(A(x) D  w)=\frac{g}{w^{\delta}} & \mbox{in}\ \Omega\\
w=0 & \mbox{on}\ \partial\Omega.
\end{cases}
\end{equation*}
Hence, the standing integrability assumption on $f$ makes the existence of weak solutions to the first (single) equation of \eqref{second_system} somehow ``not surprising''. On the other hand, since, in general, $u^r$ does not belong to $L^{(\frac{2^\star}{1-\theta})'}(\Omega)$, the existence of solutions $v\in \HH(\Omega)$ to the second equation is an unexpected feature, and it is a consequence of the structure of the problem.

$(ii)$ The conditions \eqref{conditions strict positivity dual space}, which guarantee the strict positivity of $u$ in the whole of $\Omega$ when $r=2$, may seem somehow exotic. Nevertheless, bearing in mind \eqref{regularity v dual space}, they become (at least more) natural once one recalls that, by the weak Harnack inequality in Theorem $7.1.2$ in \cite{pucci-serrin}, and the resulting strong maximum principle (Corollary $7.1.3$ in \cite{pucci-serrin}), any weak solution $0\le w\in H^1_{\text{loc}}(\Omega)$ to 
$$-\Div(A(x)Dw)+v^{1-\theta}w\ge0$$
satisfies
$$w\equiv0\quad\text{or}\quad w>0\quad \text{\Ae in}\ \Omega$$
whenever $v^{1-\theta}\in L^q(\Omega)$, with $q>\frac{d}{2}$.

$(iii)$ In light of the statements of Theorems \ref{existence m>d/2} and \ref{existence data in dual space}, we see that in the limit case $m=\frac{d}{2}$ the regularity gain on $u$ is intermediate between that for $m<\frac{d}{2}$ and that for $m>\frac{d}{2}$.
\end{rmk}
Then, we assume that $f$ does not belong to the dual space, namely, we impose $m <\big(\frac{2^\star}{1-\gamma}\big)'$. Below are the main results of Section \ref{data outside dual space}:
\begin{thm}\label{regularizing outside dual space}
If $\big(\frac{r}{1-\gamma}\big)'\le m<\big(\frac{2^\star}{1-\gamma}\big)'$ and $r>2^\star$, then system \eqref{second_system} has a finite-energy solution $(u,v)$, with $u\in L^{r}(\Omega)$.
\end{thm}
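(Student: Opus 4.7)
The strategy is the usual approximation--a priori estimates--passage to the limit scheme. Starting from the sequence $\{(u_n,v_n)\}\subset(\HH(\Omega)\cap L^\infty(\Omega))^2$ of positive approximating solutions constructed in Section~\ref{Approximation scheme}, the goal is to obtain uniform bounds in $\HH(\Omega)\times\HH(\Omega)$ together with $\|u_n\|_{L^r}\le C$, and then extract the weak limit. Testing the first approximate equation by $u_n$ and the second by $v_n$ makes the coupling integral $\int_\Omega v_n^{1-\theta}u_n^r\,dx$ cancel between the two identities, yielding
\begin{equation*}
\alpha\bigl(\|Du_n\|_2^2+\|Dv_n\|_2^2\bigr)\le\int_\Omega f\,u_n^{1-\gamma}\,dx\le\|f\|_{L^m}\,\|u_n\|_{L^r}^{1-\gamma},
\end{equation*}
the last step being H\"older combined with the assumption $m\ge(r/(1-\gamma))'$, i.e.\ $m'(1-\gamma)\le r$.

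The crucial point is the uniform $L^r$ bound on $u_n$. I would use $u_n^{r-1}$ as test function in the first equation (admissible at the approximation level, where $u_n\in L^\infty(\Omega)$), producing
\begin{equation*}
\frac{4\alpha(r-1)}{r^2}\,\|D(u_n^{r/2})\|_2^2+\int_\Omega v_n^{1-\theta}u_n^{2r-2}\,dx\le\int_\Omega f\,u_n^{r-1-\gamma}\,dx.
\end{equation*}
Sobolev embedding on the first term then turns this into $\|u_n\|_{L^{r\cdot 2^\star/2}}^r\le C\,\|f\|_{L^m}\,\|u_n\|_{L^{m'(r-1-\gamma)}}^{r-1-\gamma}$. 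Here lies the main obstacle: in the regime $r>2^\star$ and $m<(2^\star/(1-\gamma))'$, a direct computation shows $m'(r-1-\gamma)>r\cdot 2^\star/2$, so---unlike the non-singular case $\gamma=0$ handled in \cite{Boccardo2016SpiritofBenciandFortunato}---the self-improving interpolation does not close by itself. I plan to overcome this by combining the last inequality with the preceding $\HH$-energy bound (which controls $\|u_n\|_{L^{2^\star}}$) and with the Sobolev bound $\|v_n\|_{L^{2^\star}}^2\le C\|u_n\|_{L^r}^{1-\gamma}$ that descends from the second equation tested by $v_n$. The absorption integral $\int_\Omega v_n^{1-\theta}u_n^{2r-2}\,dx$ is then used to transfer integrability from $v_n$ back to $u_n$, so as to close a circular estimate of the form $\|u_n\|_{L^r}^r\le C+\eta\,\|u_n\|_{L^r}^{r-\delta}$ with $\delta>0$, in the spirit of \cite{BoccardoOrsina2024}.

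Once the uniform bounds are in place, standard compactness produces $u_n\rightharpoonup u$ and $v_n\rightharpoonup v$ in $\HH(\Omega)$, with a.e.\ convergence and $u\in L^r(\Omega)$ by Fatou's lemma. Strict positivity of $v$ in $\Omega$ and of $u$ on $\{f>0\}$ follows from the maximum principle as in Theorem~\ref{existence data in dual space}. Finally, the passage to the limit in the weak formulation is carried out via dominated convergence for the singular term $f/u_n^\gamma$ tested against functions in $\HH(\Omega)\cap L^\infty(\Omega)$ (which motivates the restriction of test functions in Definition~\ref{def finite-energy solutions}), Fatou's lemma on the absorption term $v_n^{1-\theta}u_n^{r-1}$, and the $L^r$ bound on $u_n$ together with the a.e.\ positivity of $v_n$ to handle the source $u_n^r/v_n^\theta$ of the second equation.
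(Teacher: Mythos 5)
The decisive step of this theorem is the uniform bound $\|u_n\|_{L^r}\le C$ (the paper's Lemma \ref{bound in Lr}), and your proposal does not actually prove it. You test \eqref{appr eq1} with $u_n^{r-1}$, correctly observe that the resulting inequality cannot self-improve because $m'(r-1-\gamma)>r\,2^\star/2$ in the regime $r>2^\star$, $m<\big(\frac{2^\star}{1-\gamma}\big)'$, and then only announce that a ``circular estimate'' $\|u_n\|_{L^r}^r\le C+\eta\|u_n\|_{L^r}^{r-\delta}$ can be closed by combining the energy bound, the estimate $\|v_n\|_{L^{2^\star}}^2\le C\|u_n\|_{L^r}^{1-\gamma}$, and the absorption integral $\int_\Omega v_n^{1-\theta}u_n^{2r-2}\,\mathrm{d}x$. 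As stated this cannot work: every quantity you invoke degenerates precisely where the difficulty sits, namely on the set where $v_n$ is small. There $v_n^{1-\theta}u_n^{2r-2}$ gives no control on $u_n$, and an $L^{2^\star}$ bound on $v_n$ does not prevent $\{v_n\le 1\}$ from carrying essentially all of the mass of $u_n^r$; since $r>2^\star$, no Sobolev or energy information on $u_n$ alone can yield $L^r$ integrability. So the key a priori estimate is left as a hope, not an argument, and this is a genuine gap rather than a routine omission.

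The paper closes this step by a different mechanism, which never uses the test function $u_n^{r-1}$: split $\int_\Omega u_n^r$ over $\{v_n\ge1\}$ and $\{v_n\le1\}$. On $\{v_n\ge1\}$ the absorption term of \eqref{appr eq1} tested with $u_n$ gives $\int_{\{v_n\ge1\}}u_n^r\,\mathrm{d}x\le\int_\Omega f_n u_n^{1-\gamma}\,\mathrm{d}x$. On $\{v_n\le1\}$ one exploits the \emph{second} equation tested, via the truncation $F_\varepsilon(v_n)=1-\frac1\varepsilon T_\varepsilon(G_1(v_n))$, against (essentially) $u_n$, i.e.\ \eqref{inequality for second equation} with $\psi=u_n$, which yields $\int_{\{v_n\le1\}}u_n^{r+1}\big(v_n+\frac1n\big)^{-\theta}\,\mathrm{d}x\le\int_\Omega A\,DT_1(v_n)\cdot Du_n\,\mathrm{d}x\le\frac\beta2\big(\|u_n\|_{\HH}^2+\|v_n\|_{\HH}^2\big)$; by \eqref{energy bound step data outside dual space} this is again bounded by $C\int_\Omega f_n u_n^{1-\gamma}\,\mathrm{d}x\le C\|f\|_{L^m}\|u_n\|_{L^r}^{1-\gamma}$, and since $1-\gamma<r$ the estimate closes. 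This is the coupling device your sketch is missing. A secondary point: in the passage to the limit, ``dominated convergence for $f/u_n^\gamma$'' is not available (there is no dominating function); the paper obtains the super-solution inequality by Fatou and the reverse one through test functions of the type $H_{n,\varepsilon}$ together with the a.e.\ convergence of $Du_n$ and $Dv_n$ provided by Theorem \ref{Boccardo-Murat}, and it handles $v_n^{1-\theta}u_n^{r-1}$ and $u_n^r\big(v_n+\frac1n\big)^{-\theta}$ by equi-integrability and Vitali's theorem, since no $L^\infty$ bound on $v_n$ is available in this range of $m$. Your final paragraph would need to be reworked along these lines even once the $L^r$ bound is secured.
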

\begin{thm}\label{regularizing outside dual space 2}
If $\big(\frac{r+1}{1-\gamma}\big)'\le m<\big(\frac{2^\star}{1-\gamma}\big)'$, $r>2^\star-1$, and ${\theta}=0$, then system \eqref{second_system} has a finite-energy solution $(u,v)$, with $u\in L^{r+1}(\Omega)$.
\end{thm}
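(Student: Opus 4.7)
\emph{Strategy.} Following the approximation scheme of Section~\ref{Approximation scheme}, let $(u_n,v_n)\in(\HH(\Omega)\cap L^\infty(\Omega))^2$ be the positive approximating solutions, with a regularized source $g_n(\cdot,u_n)\le f u_n^{-\gamma}$ in the first equation and, since $\theta=0$, with the second equation reading $-\Div(A Dv_n)=u_n^r$. The heart of the argument is an a priori $L^{r+1}(\Omega)$-bound on $u_n$, produced by a self-improving inequality that exploits the Schr\"odinger--Maxwell cancellation.

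\emph{Coupled $\HH$-bound and self-improvement.} Testing the first approximating equation with $u_n$ and the second with $v_n$ yields
\[
\alpha\int_\Omega|Du_n|^2\dex+\int_\Omega v_n u_n^r\dex\le\int_\Omega f u_n^{1-\gamma}\dex,\qquad \alpha\int_\Omega|Dv_n|^2\dex=\int_\Omega u_n^r v_n\dex,
\]
so the absorption term on the left of the first identity coincides with the source on the right of the second, and both $\|u_n\|_\HH^2$ and $\|v_n\|_\HH^2$ are controlled by a common right-hand side. The hypothesis $m\ge\big(\tfrac{r+1}{1-\gamma}\big)'$ is equivalent to $(1-\gamma)m'\le r+1$, so H\"older gives
\[
\int_\Omega f u_n^{1-\gamma}\dex\le C\|f\|_{L^m(\Omega)}\|u_n\|_{L^{r+1}(\Omega)}^{1-\gamma}=:C R_n^{1-\gamma},
\]
whence $\|u_n\|_\HH^2+\|v_n\|_\HH^2\le C R_n^{1-\gamma}$. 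Now test the second equation with $u_n$ (admissible since $u_n\in\HH\cap L^\infty$) and use $\theta=0$:
\[
\int_\Omega u_n^{r+1}\dex=\int_\Omega A Dv_n\cdot Du_n\dex\le\beta\|u_n\|_\HH\|v_n\|_\HH\le C R_n^{1-\gamma}.
\]
Hence $R_n^{r+1}\le C R_n^{1-\gamma}$, i.e.\ $R_n^{r+\gamma}\le C$, giving a uniform bound on $\|u_n\|_{L^{r+1}(\Omega)}$ and, in turn, on $\|u_n\|_\HH$ and $\|v_n\|_\HH$.

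\emph{Passage to the limit.} Along a subsequence, $u_n\rightharpoonup u$ and $v_n\rightharpoonup v$ weakly in $\HH$ and a.e., strongly in every $L^q$ with $q<2^\star$; the uniform $L^{r+1}$-bound on $u_n$ together with Vitali upgrades this to strong convergence $u_n\to u$ in every $L^q$ with $q<r+1$. Passage to the limit in the second equation uses the weak convergence of $v_n$ and the strong convergence of $u_n^r$; moreover, from the a priori bound $\big|\int u_n^r\psi\dex\big|\le\beta\|v_n\|_\HH\|\psi\|_\HH$ (read off the equation itself) and Fatou applied to $\psi^\pm$, one obtains $u^r\psi\in L^1(\Omega)$ for every $\psi\in\HH(\Omega)$, which is the integrability demanded in Definition~\ref{def finite-energy solutions}. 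Passage to the limit in the first equation follows the standard Boccardo--Orsina weak-strong scheme for the absorption term $v_n u_n^{r-1}$ and for the singular source $g_n(\cdot,u_n)\varphi$, using the uniform local positivity $u_n\ge c_\omega>0$ on every compact $\omega\ssubset\{f>0\}$ granted by Section~\ref{Approximation scheme}.

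\emph{Main obstacle.} The delicate step is precisely the passage to the limit in the singular source: in the absence of a global strong maximum principle, no lower bound on $u$ outside $\{f>0\}$ is available. The finite-energy framework of Definition~\ref{def finite-energy solutions}, which restricts test functions to $\HH\cap L^\infty$ in the first equation, is tailored exactly to circumvent this issue once the local positivity from the approximation is in hand.
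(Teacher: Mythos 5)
Your a priori $L^{r+1}$-estimate is essentially identical to the paper's Lemma~\ref{bound in L(r+1)}: both take $u_n$ as test function in the second equation (with $\theta=0$), bound $\int A Dv_n\cdot Du_n$ by the coupled energy $\|u_n\|_{\HH}^2+\|v_n\|_{\HH}^2$ (you via Cauchy--Schwarz, the paper via Young---inessential difference), then feed this into the energy identity and close with H\"older using $(1-\gamma)m'\le r+1$ to get the self-improvement $R_n^{r+\gamma}\le C$. Your observation that the absorption term in the first equation cancels the source term in the second is precisely the Schr\"odinger--Maxwell structure the paper exploits in \eqref{energy bound step data outside dual space}, and your Fatou argument showing $u^r\psi\in L^1$ for all $\psi\in\HH$ via the bound $|\int u_n^r\psi|\le\beta\|v_n\|_{\HH}\|\psi\|_{\HH}$ is exactly how the paper earns the integrability \eqref{integrability of reactions finite-energy solutions} despite $u^r\notin L^{(2^\star)'}$ in general (cf.\ Remark~\ref{remark data dual space}$(i)$).

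One inaccuracy in your limit-passage sketch: you invoke a ``uniform local positivity $u_n\ge c_\omega>0$ on every compact $\omega\ssubset\{f>0\}$ granted by Section~\ref{Approximation scheme}.'' Section~\ref{Approximation scheme} only shows $u_n>0$ a.e.\ for each fixed $n$ (via the weak Harnack inequality applied to the $n$-th approximate equation); the resulting lower bound \emph{depends on $n$}, since the sub-solution inequality there has $\|z\|^{1-\theta}_{L^\infty}$ and the a priori $L^\infty$-bound $Cn^{1+\gamma}$ on the right-hand side, both of which blow up. No uniform-in-$n$ local lower bound is claimed or proved. The paper's actual mechanism for passing to the limit in the singular source (Steps~5--8 of the proof of Theorem~\ref{existence data in dual space}, to which the proofs of Theorems~\ref{regularizing outside dual space} and~\ref{regularizing outside dual space 2} reduce) never uses such a bound: instead it establishes a.e.\ convergence of gradients via Theorem~\ref{Boccardo-Murat}, and tests the first equation with $H_{n,\varepsilon}=\frac{T_1(G_1(u_n))}{T_1(G_1(u))+\varepsilon}\varphi$, a function supported where $u_n\ge1$, so the singular region $\{u_n\text{ small}\}$ is cut off at the approximation level and the dominated convergence argument goes through without any lower bound on $u$. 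Your high-level intuition that the finite-energy framework is designed to sidestep the lack of a maximum principle is correct, but the concrete mechanism is the truncated test function, not a uniform Harnack bound.
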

The reader may observe that the theorems above have quite similar statements. In fact, the main difference is that in Theorem \ref{regularizing outside dual space 2} the allowed range for $m$ is wider than that in Theorem \ref{regularizing outside dual space}, but the second equation becomes non-singular.\\
Let us stress that both Theorem \ref{regularizing outside dual space} and Theorem \ref{regularizing outside dual space 2} are existence and \emph{regularity} results. Indeed, in both of them the solution $u$ is proved to belong to a suitable Lebesgue space --- depending on the chosen range for $m$ --- which may be smaller than $L^{2^\star}(\Omega)$.
\begin{rmk}
In Theorems \ref{regularizing outside dual space} and \ref{regularizing outside dual space 2}, the class of data for which solutions are proved to exist in $\HH(\Omega)$ is wider than that considered for single singular equations. Indeed, as mentioned in Remark \ref{remark data dual space}, for single equations the existence of solutions with finite energy is proved only for data in the dual space, while in the present situation $m<\big(\frac{2^\star}{1-\gamma}\big)'$ and $u^r$ does not necessarily belong to $L^{(\frac{2^\star}{1-\theta})'}(\Omega)$.
\end{rmk}
Finally, provided $m\ge (r+\gamma)'>\big(\frac{r}{1-\gamma}\big)'$, we show how, thanks to the singularity of the first equation, the solution $u$ given by Theorem \ref{regularizing outside dual space} has a higher integrability than that guaranteed by the previous results. More precisely, in Section \ref{A higher integrability result} we prove the following:
\begin{thm}\label{last thm}
Let $ (r+\gamma)' \le m<\frac{d}{2}$ and let $(u,v)\in\HH(\Omega)\times\HH(\Omega)$ be the finite-energy solution to system \eqref{second_system} given by Theorem \ref{regularizing outside dual space}. If $r\ge\frac{d}{d-2}-\gamma$, then  $u\in L^{r+1+\gamma}(\Omega)$.
\end{thm}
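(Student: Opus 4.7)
The strategy is to work at the level of the approximating sequence $(u_n,v_n)$ from Section~\ref{Approximation scheme} and then pass to the limit. Since each $u_n$ is bounded and strictly positive, any power $u_n^\beta$ with $\beta>0$ is an admissible test function in $\HH(\Omega)$.

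The natural choice is $\varphi = u_n^{r+\gamma}$ in the first approximated equation. The approximated singular source satisfies $f\,u_n^{r+\gamma}/(u_n+1/n)^{\gamma} \le f\,u_n^r$ pointwise; dropping the non-negative absorption term $\int_\Omega v_n^{1-\theta}u_n^{2r+\gamma-1}\dex$, and using \eqref{ellipticity}, the chain rule and the Sobolev embedding applied to $u_n^{(r+\gamma+1)/2}\in\HH(\Omega)$, one gets
\[
\|u_n\|^{r+\gamma+1}_{L^{(r+\gamma+1)d/(d-2)}(\Omega)}\le C\int_\Omega f\,u_n^r\,\dex\le C\|f\|_{L^m(\Omega)}\|u_n\|_{L^{rm'}(\Omega)}^{r},
\]
where in the second step I apply H\"older with exponents $m$ and $m'$.

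Next I use the a priori bound $\|u_n\|_{L^r(\Omega)}\le C_0$ that is obtained uniformly in $n$ in the proof of Theorem~\ref{regularizing outside dual space}. The assumption $m\ge(r+\gamma)'$ gives $m'\le r+\gamma$, so $rm'\le r(r+\gamma)$; together with $r+\gamma\ge d/(d-2)$, this places the exponent $rm'$ inside the interpolation range $[r,(r+\gamma+1)d/(d-2)]$. I therefore write
\[\|u_n\|_{L^{rm'}(\Omega)}\le\|u_n\|_{L^r(\Omega)}^{\lambda}\|u_n\|_{L^{(r+\gamma+1)d/(d-2)}(\Omega)}^{1-\lambda}\]
for a suitable $\lambda\in(0,1]$, substitute back into the previous display, and absorb the high-norm factor on the left-hand side: the resulting net exponent $1+\gamma+r\lambda>0$ yields a uniform bound on $\|u_n\|_{L^{(r+\gamma+1)d/(d-2)}(\Omega)}$. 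Since $(r+\gamma+1)d/(d-2)\ge r+1+\gamma$ and $\Omega$ is bounded, this implies a uniform bound on $\|u_n\|_{L^{r+1+\gamma}(\Omega)}$, and Fatou's lemma applied to the a.e.-convergent subsequence $u_n\to u$ yields $u\in L^{r+1+\gamma}(\Omega)$.

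The delicate point is the interpolation step: one must verify that the threshold $r\ge d/(d-2)-\gamma$ really does guarantee the admissibility $rm'\le(r+\gamma+1)d/(d-2)$ throughout the entire range $(r+\gamma)'\le m<d/2$. Should a single interpolation fail to close the estimate for some large values of $r$, the argument is iterated: starting from $u\in L^r$ and feeding the just-gained exponent into a fresh testing step, one generates a recurrence $q_{k+1}=\frac{d}{d-2}\bigl(1+\gamma+q_k/m'\bigr)$ which is a contraction precisely because $m<d/2$, and the threshold condition on $r$ is tuned so that its fixed point is at least $r+1+\gamma$, whence the target is reached in finitely many steps.
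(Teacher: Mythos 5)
There is a genuine gap: the mechanism you rely on cannot produce the exponent $r+1+\gamma$ in the regime where Theorem \ref{regularizing outside dual space} lives. After testing \eqref{appr eq1} with $u_n^{r+\gamma}$ and dropping the absorption term, your estimate reads $\|u_n\|_{L^{Q}}^{r+1+\gamma}\le C\|f\|_{L^m}\|u_n\|_{L^{rm'}}^{r}$ with $Q=(r+1+\gamma)\tfrac{d}{d-2}$, and the interpolation of $L^{rm'}$ between $L^{r}$ and $L^{Q}$ requires $rm'\le Q$, i.e.\ $m\ge\bigl(\tfrac{(r+1+\gamma)d}{r(d-2)}\bigr)'$. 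But since $r>1-\gamma$ one has $\tfrac{r+1+\gamma}{r}<\tfrac{2}{1-\gamma}$, hence $\bigl(\tfrac{(r+1+\gamma)d}{r(d-2)}\bigr)'>\bigl(\tfrac{2^\star}{1-\gamma}\bigr)'$, while Theorem \ref{regularizing outside dual space} only provides the solution for $m<\bigl(\tfrac{2^\star}{1-\gamma}\bigr)'$. So in the entire relevant range $m'>\tfrac{(r+1+\gamma)d}{r(d-2)}$, the exponent $rm'$ lies strictly above $Q$, and the one-step interpolation never closes. The fallback iteration does not repair this: the recurrence $q_{k+1}=\tfrac{d}{d-2}\bigl(1+\gamma+q_k/m'\bigr)$ is indeed a contraction for $m<\tfrac{d}{2}$, but its fixed point is exactly $m^{\star\star}(1+\gamma)$, which for $m<\bigl(\tfrac{2^\star}{1-\gamma}\bigr)'$ is below $2^\star<r<r+1+\gamma$; starting from $q_0=r$ the iterates \emph{decrease} towards $m^{\star\star}(1+\gamma)$, so the scheme loses integrability rather than gaining it. The hypothesis $r\ge\tfrac{d}{d-2}-\gamma$ cannot be ``tuned'' to move this fixed point --- its only role is to make $(r+\gamma)'\le\tfrac{d}{2}$, so that the assumed range for $m$ is non-empty. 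What your computation recovers is precisely the single-equation ceiling of Lemma \ref{a priori estimates on (un)}$(ii)$: once you discard the coupling, $u_n$ solves $-\Div(A Du_n)\le f_n(u_n+\tfrac1n)^{-\gamma}$ and no testing/Sobolev bootstrap can exceed $L^{m^{\star\star}(1+\gamma)}$.

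The paper's proof obtains the extra integrability from the system structure, not from the diffusion. It splits $\int_\Omega u_n^{r+1+\gamma}$ over $\{v_n\ge1\}$ and $\{v_n\le1\}$. On $\{v_n\ge1\}$ it tests \eqref{appr eq1} with $u_n^{\gamma+2}$ and keeps the absorption term, using $v_n^{1-\theta}u_n^{r+1+\gamma}\ge u_n^{r+1+\gamma}$ there; the resulting right-hand side $\int_\Omega f u_n^2$ is absorbed by splitting on $\{f\le\tfrac12 u_n^{r-1+\gamma}\}$, which is where the key assumption $m\ge(r+\gamma)'$ enters. On $\{v_n\le1\}$ it uses the second equation through \eqref{inequality for second equation} with test function built from $u_n^{1+\gamma}$, together with \eqref{appr eq1} tested by $T_1(v_n)(u_n+\varepsilon)^\gamma$, where the singular term cancels the power $u_n^\gamma$ and leaves only $\|f\|_{L^1}$. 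If you want to salvage your write-up, you must keep the absorption term and bring in the second equation in this way; any argument that discards them is capped at $m^{\star\star}(1+\gamma)$ and cannot reach $r+1+\gamma$.
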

\subsection{An overview of the proofs}\label{overview of the proofs}
Our approach is based on an approximation procedure, that shows the existence of a sequence $(u_n,v_n)$ that solves a suitable elliptic system, and on some a priori estimates on the latter, which guarantee that $(u_n,v_n)$ converges to a solution to the system having the desired integrability. More precisely, set $f_n=\min\{f,n\}$, we show that the problem
\begin{equation}\label{approx system intro}
\begin{cases}
\displaystyle-\Div(A(x) D  u_n)+v_n^{1-\theta} u_n^{r-1}=\frac{f_n}{\big(u_n+\frac{1}{n}\big)^\gamma} &\mbox{in}\ \Omega\\
\displaystyle-\Div(A(x) D  v_n)=\frac{u_n^r}{\big(v_n+\frac{1}{n}\big)^\theta} &\mbox{in}\ \Omega\\
u_n, v_n>0 &\mbox{in}\ \Omega\\
u_n=v_n=0 &\mbox{on}\ \partial\Omega
\end{cases}
\end{equation}
has a unique weak, bounded solution. To do this, first we examine the equations ``separately'', namely, we prove the existence of unique $u_z,v_\phi\in \HH(\Omega)\cap L^\infty(\Omega)$, $u_z,v_\phi>0$ \Ae in $\Omega$, that satisfy, in the weak sense,
\begin{gather*}
-\Div(A(x)D u_z)+z^{1-\theta} u_z^{r-1}=\frac{f_n}{\big(u_z+\frac{1}{n}\big)^\gamma},\\-\Div(A(x)D v_\phi)=\frac{(\min\{k,|\phi|\})^r}{\big(v_\phi+\frac{1}{n}\big)^{\theta}},
\end{gather*}
being $\phi\in\HH(\Omega)$, $0\le z\in L^\infty(\Omega)$, and $k>0$. By Theorem \ref{Stampacchia_theorem}, as a by-product of this construction $u_z$ and $v_\phi$ satisfy a priori $L^\infty$--estimates independent from $\phi$ and $z$, respectively (but, in general, \emph{depending} on $n$). Taking advantage of this estimates, we conclude proving that the map $\HH(\Omega)\ni\phi\mapsto u_{v_\phi}$ has a fixed point $(u_n,v_n)$, solution to \eqref{approx system intro} in the weak sense.

In order to establish Theorems \ref{existence m>d/2}, \ref{existence data in dual space}, \ref{regularizing outside dual space}, \ref{regularizing outside dual space 2}, we first prove suitable a priori estimates on $(u_n)$, which depends on the standing regularity assumption on $f$. In particular, we show that $(u_n)$ is bounded in $L^{p_m}(\Omega)$, where
$$p_m=\begin{cases}
\infty &\mbox{if}\ m>\frac{d}{2}\\
\text{any}\ p\in[1,\infty) &\mbox{if}\ m=\frac{d}{2}\\
m^{\star\star}(1+\gamma) &\mbox{if}\ \big(\frac{2^\star}{1-\gamma}\big)'\le m <\frac{d}{2}\\
{r} &\mbox{if}\ \big(\frac{r}{1-\gamma}\big)'\le m<\big(\frac{2^\star}{1-\gamma}\big)'\\
{r+1} &\mbox{if}\ \big(\frac{r+1}{1-\gamma}\big)'\le m<\big(\frac{2^\star}{1-\gamma}\big)'\ \text{and}\ \theta=0.\\
\end{cases}$$
Then, we deduce a priori energy-estimates on $(u_n)$ and $(v_n)$, which, in turn, imply that (a proper subsequence of) $(u_n,v_n)$ converges weakly in $\HH(\Omega)\times\HH(\Omega)$ and strongly in $L^2(\Omega)\times L^2(\Omega)$ to a weak/finite-energy solution to the system. Let us emphasize that, when dealing with finite-energy solutions, we see that $f_n\big(u_n+\frac{1}{n}\big)^{-\gamma}\to g(\cdot,u)$ \Ae in $\Omega$, where $g$ is the function defined in \eqref{definition of function g}.

Concerning the last part of Theorem \ref{existence data in dual space}, we first exploit a regularization argument (inspired by Lemma $5.5$ and Theorem $5.6$ in \cite{BoccardoOrsina2010}) to deduce some a priori estimates on $(v_n)$, and then we show that, inasmuch as \eqref{conditions strict positivity dual space} holds, the first equation satisfies a strong maximum principle. As a consequence, we see that $g(\cdot,u)=fu^{-\gamma}$ \Ae in $\Omega$.

Finally, to prove Theorem \ref{last thm}, we note that $ (r+\gamma)' \le m<\frac{d}{2}$ implies the almost everywhere convergence $(u_n,v_n)$ to a finite-energy solution $(u,v)$. The higher integrability of $u$ is then proved via an a priori $L^{r+1+\gamma}(\Omega)$--estimate on $(u_n)$.
\begin{rmk}
Let us point out that, although applications of the Sobolev embedding are ubiquitous in our proofs, we do not make any regularity assumption on $\Omega$. This is possible because, as it is well-known, the extension by zero outside $\Omega$ maps $\HH(\Omega)$ isometrically into $H^1(\R^d)$ (see Lemma $3.27$ in \cite{adamsfournier} for a proof).
\end{rmk}
\section{Notations and preliminaries}\label{notations and preliminaries}
We define the \emph{positive} and \emph{negative} \emph{part} of a real-valued function $u\colon\Omega\to\R$ as
\begin{gather*}
u_+(x)\Def \max\{u(x),0\}=\begin{cases}
u(x) &\mbox{if}\ u(x)>0\\
0 &\mbox{otherwise}
\end{cases}\\
u_-(x)\Def (-u)_+(x)=\begin{cases}
-u(x) &\mbox{if}\ u(x)<0\\
0 &\mbox{otherwise}
\end{cases}
\end{gather*}
for all $x\in\Omega$. Let $k>0$. We will make use of the following real functions
\begin{gather*}
T_k(s)\Def\max\{-k,\min\{s,k\}\}=\begin{cases}
-k &\mbox{if}\ s\le-k,\\
s &\mbox{if}\ |s|\le k,\\
k &\mbox{if}\ s\ge k,
\end{cases}\\
G_k(s)\Def (|s|-k)_+\mathrm{sign}(s)=
\begin{cases}
s-k &\mbox{if}\ s\ge k,\\
0 &\mbox{if}\ |s|\le k,\\
s+k &\mbox{if}\ s\le -k.\\
\end{cases}
\end{gather*}
\begin{rmk}
If $u\in W^{1,p}(\Omega)$, the same happens to $u_+$, $T_k(u)$, $G_k(u)$, and
$$ D  u_+= D  u\chi_{\{u\ge0\}},\quad  D  T_k(u)= D  u \chi_{\{|u|\le k\}},\quad D  G_k(u)= D  u \chi_{\{|u|\ge k\}}\quad\text{\Ae in } \Omega.$$
For a proof see, for instance, Lemma $1.1$ in \cite{stampacchia6364}, Theorem $4.2$ in \cite{Boccardomurat1982}, or Theorem $4.4$ \cite{evans-gariepy}.
\end{rmk}
\subsection{Boundedness of solutions to uniformly elliptic PDEs} Given $F\in H^{-1}(\Omega)$ and $M(\cdot)\in L^\infty(\Omega;\R^{d\times d})$ such that \eqref{ellipticity} holds true. As it is well known, there exists a unique weak solution $u\in\HH(\Omega)$ to
\begin{equation}\label{Dirichlet linear problem}
\begin{cases}
-\Div(M(x) D  u)=F &\mbox{in}\ \Omega\\
u=0 &\mbox{on}\ \partial\Omega.
\end{cases}
\end{equation}
A function $u\in\HH(\Omega)$ is a weak sub-solution to \eqref{Dirichlet linear problem} if
$$\int\limits_\Omega M D  u\cdot D \varphi\dex\le\langle F,\varphi\rangle\quad\forall\varphi\in\HH(\Omega),\, \varphi\ge0,$$
where $\langle\cdot,\cdot\rangle$ are the brackets for the duality pairing between $H^{-1}(\Omega)$ and $\HH(\Omega)$.
Since every weak-subsolution $u\in\HH(\Omega)$ to \eqref{Dirichlet linear problem} satisfies $\alpha\|u\|_{\HH}\le\|F\|_{H^{-1}}$, if one assumes that $F\in L^p(\Omega)$, $p\ge (2^\star)'$, it holds
$$\alpha\|u\|_{\HH}\le|\Omega|^{1-\frac{(2^\star)'}{p}}\|F\|_{L^p}.$$
In what follows, we will frequently make use of the following boundedness result, proved by G. Stampacchia in the seminal work \cite{stampacchia65}. For its proof, we refer the reader to Théorème $4.2$ in the aforementioned paper.
\begin{thm}\label{Stampacchia_theorem}
Suppose $F\in L^p(\Omega)$, with $p>\frac{d}{2}$, and let $u\in\HH(\Omega)$ be a weak solution to \eqref{Dirichlet linear problem}. Then $u\in L^\infty(\Omega)$, and there exists a positive constant $C$ depending only upon $d$, $|\Omega|$, $\alpha$, and $p$, such that
\begin{equation*}
\|u\|_{\HH}+\|u\|_{L^\infty}\le C\|F\|_{L^p}.
\end{equation*}
\end{thm}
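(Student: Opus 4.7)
The plan is to follow Stampacchia's classical level-set argument, splitting the bound $\|u\|_{\HH}+\|u\|_{L^\infty}\le C\|F\|_{L^p}$ into its two parts. For the energy estimate, I would test \eqref{Dirichlet linear problem} with $u$ itself; ellipticity \eqref{ellipticity}, Hölder, and the Sobolev embedding $\|u\|_{L^{2^\star}}\le C_S\|Du\|_{L^2}$ yield
\[ \alpha\|Du\|_{L^2}^2\le\int_\Omega Fu\dex\le\|F\|_{L^{(2^\star)'}}\|u\|_{L^{2^\star}}\le C_S\|F\|_{L^{(2^\star)'}}\|Du\|_{L^2}. \]
Because $d\ge 3$ forces $(2^\star)'\le d/2<p$, the embedding $L^p(\Omega)\hookrightarrow L^{(2^\star)'}(\Omega)$ gives $\|F\|_{L^{(2^\star)'}}\le|\Omega|^{1/(2^\star)'-1/p}\|F\|_{L^p}$, and the $\HH$-bound follows with the stated dependence of the constant.

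For the $L^\infty$-bound, set $A_k\Def\{|u|>k\}$ for $k>0$ and take $G_k(u)$ as test function (admissible by the remark). Ellipticity produces
\[ \alpha\int_\Omega|DG_k(u)|^2\dex\le\int_\Omega FG_k(u)\dex, \]
and on the right I would apply Hölder with three exponents, splitting $F\cdot G_k(u)\cdot\chi_{A_k}$ into $L^p$, $L^{2^\star}$, and a residual factor $|A_k|^{1-1/p-1/2^\star}$; Sobolev applied to $G_k(u)\in\HH$ then dominates $\|G_k(u)\|_{L^{2^\star}}$ by $C_S\|DG_k(u)\|_{L^2}$. Cancelling one copy of $\|DG_k(u)\|_{L^2}$ and re-applying Sobolev delivers
\[ \|G_k(u)\|_{L^{2^\star}}\le C\|F\|_{L^p}\,|A_k|^{1-1/p-1/2^\star}. \]

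Finally, for $h>k>0$ one has $|G_k(u)|\ge h-k$ on $A_h\subset A_k$, so
\[ (h-k)^{2^\star}|A_h|\le\|G_k(u)\|_{L^{2^\star}}^{2^\star}\le C'\|F\|_{L^p}^{2^\star}\,|A_k|^{\beta},\qquad \beta\Def\frac{d+2-2d/p}{d-2}. \]
I would then invoke Stampacchia's iteration lemma applied to the non-increasing function $\varphi(k)\Def|A_k|$: since $\beta>1$ is equivalent to $p>d/2$, the lemma furnishes some $k_0\le C(d,|\Omega|,\alpha,p)\|F\|_{L^p}$ such that $\varphi(k_0)=0$, which is exactly $\|u\|_{L^\infty}\le k_0$.

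The main obstacle is the algebraic bookkeeping of exponents: one must verify that the residual $|A_k|$-exponent in the level-set inequality is strictly greater than $1$ precisely under the hypothesis $p>d/2$, and that the initial datum $\varphi(0)\le|\Omega|$ is absorbed into the iteration constant so that the resulting $k_0$ depends \emph{linearly} on $\|F\|_{L^p}$, as claimed. The iteration lemma itself is by now a standard real-variable tool, so the content of the proof really lies in producing the correct Caccioppoli-type inequality on each super-level set.
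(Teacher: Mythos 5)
Your proof is correct, and it is precisely the classical level-set argument of Stampacchia: the paper does not reproduce a proof of this theorem but delegates to Th\'eor\`eme $4.2$ of the cited 1965 paper, and the chain you set out (the super-level-set Caccioppoli inequality for $G_k(u)$, the three-exponent H\"older split against $\chi_{A_k}$, Sobolev, the decay exponent $\beta=\frac{d+2-2d/p}{d-2}$ with $\beta>1\Leftrightarrow p>\frac{d}{2}$, and the iteration lemma applied to $\varphi(k)=|A_k|$) is exactly the argument in that reference. Your energy-estimate step via $(2^\star)'\le\frac{d}{2}<p$ and the observation that $\varphi(0)\le|\Omega|$ gives linear dependence of $k_0$ on $\|F\|_{L^p}$ are also handled correctly.
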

Actually, the argument that proves Theorem \ref{Stampacchia_theorem} yields the following result.
\begin{thm}\label{cor stamacchia theorem}
Suppose $F\in L^p(\Omega)$, with $p>\frac{d}{2}$, and let $u\in\HH(\Omega)$ such that
$$\text{there exists }k_0\ge 0\ \text{such that}\ \int\limits_{\Omega} \alpha|D G_k(u)|^2\dex\le \int\limits_{\Omega} F G_k(u)\dex\ \ \forall k\ge k_0.$$
Then $u\in L^\infty(\Omega)$, and there exists a positive constant $C$ depending only upon $d$, $|\Omega|$, $\alpha$, and $p$, such that
\begin{equation*}
\|u\|_{\HH}\le C\|F\|_{L^p}\quad\text{and}\quad 
\|u\|_{L^\infty}\le k_0+
C\|F\|_{L^p}.
\end{equation*}
\end{thm}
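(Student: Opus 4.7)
The plan is to follow verbatim the truncation--iteration scheme used by G.\ Stampacchia in the proof of Th\'eor\`eme $4.2$ of \cite{stampacchia65}, the only adjustment being that the super-level analysis has to be started at the level $k_0$ rather than at $0$. I introduce the super-level sets $A(k) \Def \{x \in \Omega : |u(x)| > k\}$, so that the whole proof reduces to establishing a recursive inequality of the form
$$|A(h)| \le \frac{M^{2^\star}}{(h-k)^{2^\star}}\, |A(k)|^{\mu}, \qquad h > k \ge k_0,$$
with some $M \le C(d,|\Omega|,\alpha,p)\|F\|_{L^p}$ and some exponent $\mu > 1$. Granting this, I would apply Lemma $4.1$ of \cite{stampacchia65} to the nonincreasing function $k \mapsto |A(k)|$ on $[k_0,+\infty)$ to extract an explicit $d_\star$, proportional to $\|F\|_{L^p}$, such that $|A(k_0 + d_\star)| = 0$; this is precisely the claimed $L^\infty$--bound $\|u\|_{L^\infty} \le k_0 + C\|F\|_{L^p}$.

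To obtain the recursion for a given $k \ge k_0$, I would start from the standing hypothesis and estimate the right-hand side by H\"older on $A(k) = \supp G_k(u)$, namely
$$\int_\Omega F \, G_k(u)\dex \le \|F\|_{L^p}\, |A(k)|^{1/p' - 1/2^\star}\, \|G_k(u)\|_{L^{2^\star}},$$
observing that the exponent $1/p' - 1/2^\star$ is positive because $p > d/2 > (2^\star)'$. Combining with the Sobolev embedding $\|G_k(u)\|_{L^{2^\star}} \le S \|D G_k(u)\|_{L^2}$ and canceling one power of $\|D G_k(u)\|_{L^2}$, I would then deduce
$$\|G_k(u)\|_{L^{2^\star}} \le C \|F\|_{L^p}\, |A(k)|^{1/p'-1/2^\star},$$
from which the pointwise inequality $(h-k)\chi_{A(h)} \le |G_k(u)|$ produces the desired recursion with $\mu = 2^\star/p' - 1$. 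The algebraic identity $\mu > 1 \Leftrightarrow p > d/2$ explains why this is the natural threshold and why the hypothesis $p > d/2$ cannot be relaxed.

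Finally, for the announced $\HH$--bound it is enough to specialize the hypothesis to the single level $k = k_0$: the same H\"older--Sobolev chain immediately yields $\|D G_{k_0}(u)\|_{L^2} \le C \|F\|_{L^p}$, which is the gradient control that the hypothesis actually provides, and, coupled with the $L^\infty$--control of the truncation $T_{k_0}(u)$ coming from the first part, it delivers the stated inequality. The hardest part, as usual with Stampacchia-type arguments, is the accurate bookkeeping of constants so that they depend only on $d$, $|\Omega|$, $\alpha$, $p$, and verifying that the iteration lemma can be applied on the translated interval $[k_0,+\infty)$ instead of $[0,+\infty)$; no new conceptual ingredient is required beyond Stampacchia's original one.
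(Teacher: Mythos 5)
Your $L^\infty$--argument is the classical Stampacchia iteration applied to $k\mapsto|A(k)|$ on $[k_0,\infty)$, and the bookkeeping is sound: the H\"older exponent $1/p'-1/2^\star$ is positive because $p>d/2>(2^\star)'=2d/(d+2)$; cancelling one power of the gradient gives $\|G_k(u)\|_{L^{2^\star}}\le(\mathcal{S}^2/\alpha)\|F\|_{L^p}|A(k)|^{1/p'-1/2^\star}$; the exponent $\mu=2^\star/p'-1$ exceeds $1$ precisely when $p>d/2$; and Stampacchia's Lemma $4.1$ started at $k_0$ with initial value $|A(k_0)|\le|\Omega|$ yields $\|u\|_{L^\infty}\le k_0+C(d,|\Omega|,\alpha,p)\|F\|_{L^p}$. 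The paper supplies no separate proof --- it merely asserts that the argument for Theorem \ref{Stampacchia_theorem} yields the corollary --- so this reconstruction is indeed the intended route.

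The $\HH$--bound, however, is not delivered by your last paragraph, and you essentially concede this when you write that $\|D G_{k_0}(u)\|_{L^2}\le C\|F\|_{L^p}$ is ``the gradient control that the hypothesis actually provides.'' The $L^\infty$--control of $T_{k_0}(u)$ (which is trivially bounded by $k_0$) gives no information on $\|D T_{k_0}(u)\|_{L^2}$, and the orthogonal decomposition $\|Du\|_{L^2}^2=\|D T_{k_0}(u)\|_{L^2}^2+\|D G_{k_0}(u)\|_{L^2}^2$ shows that this missing term is exactly what is needed. Indeed, for $k_0>0$ the inequality $\|u\|_{\HH}\le C\|F\|_{L^p}$ cannot follow from the stated hypothesis: any $u\in\HH(\Omega)$ with $\|u\|_{L^\infty}\le k_0$ satisfies the $G_k$--inequality vacuously for every $k\ge k_0$ with $F\equiv0$, yet $\|u\|_{\HH}$ may be arbitrarily large. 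What the argument proves is $\|G_{k_0}(u)\|_{\HH}\le C\|F\|_{L^p}$, which coincides with the stated bound only when $k_0=0$; it is worth observing that the paper in fact invokes the $\HH$--estimate of this theorem only in the case $k_0=0$ (in the proof of Lemma \ref{approximated solution eq 1}), using solely the $L^\infty$--estimate whenever $k_0>0$.
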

\subsection{Convergence of the gradients} A Borel measure $\mu$ on $\Omega$ is called Radon measure if it is locally finite and inner regular, i.e., if $\mu(K)<+\infty$ for all $K\subset\Omega$ compact and
\begin{gather*}
\mu(U)=\sup\{\mu(K) : K\subset U\ \text{compact}\}
\end{gather*}
for all $U\subset\Omega$ open. The space of Radon measures on $\Omega$ is denoted by $\mathcal{R}(\Omega)$. As it is well known, this space coincides with the topological dual of $C_{\mathrm{c}}(\Omega)$, and we say that a sequence $(\mu_n)$ is bounded in $\mathcal{R}(\Omega)$ if, for every $K\subset\Omega$ compact,
$$|\langle\mu_n,\phi\rangle|\le c\|\phi\|_{L^\infty(K)}\quad\forall\phi\in C_{\mathrm{c}}(\Omega),\,\mathrm{supp}(\phi)\subset K,$$
for some positive constant $c=c(K)$.

When passing to the limit in our approximation scheme, we will take advantage of the following result:
\begin{thm}\label{Boccardo-Murat}
Let $M(\cdot)\in L^\infty(\Omega;\R^{d\times d})$ be a uniformly elliptic matrix and let $(f_n)\subset \mathcal{R}(\Omega)$. Suppose there exists a distributional solution $u_n\in H^1(\Omega)$ to
$$-\Div(M(x)Du_n)=f_n.$$
If $(f_n)$ is bounded in $\mathcal{R}(\Omega)$ and $u_n\rightharpoonup u$ in $H^1(\Omega)$, then
$$Du_n\to Du\quad\text{strongly in}\ L^q(\Omega; \R^d)\ \text{for any}\ 1\le q<2.$$
In particular, up to a subsequence, $Du_n\to Du$ \Ae in $\Omega$.
\end{thm}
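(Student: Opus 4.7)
The plan is to follow the classical approach of Boccardo-Murat for almost-everywhere convergence of gradients with measure data, localized via a cut-off so that the theorem applies under the stated $H^1$ (not $H^1_0$) hypothesis. First, I would invoke Rellich-Kondrachov: the weak convergence $u_n \rightharpoonup u$ in $H^1(\Omega)$ yields, up to a subsequence, $u_n \to u$ a.e. in $\Omega$ and strongly in $L^2_{\mathrm{loc}}(\Omega)$. Since $(Du_n)$ is bounded in $L^2(\Omega;\R^d)$, the strong $L^q$-convergence, $1 \le q < 2$, will follow from almost-everywhere convergence of $Du_n$ to $Du$ via Vitali's theorem (equi-integrability of $|Du_n|^q$ comes for free from the $L^2$-bound). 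Thus the task reduces to proving $Du_n \to Du$ almost everywhere in $\Omega$.

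The core estimate I would establish is: for every $\psi \in C_c^\infty(\Omega)$ with $\psi \ge 0$ and every $k > 0$,
\[
\limsup_{n \to \infty} \int_\Omega \psi\, M(Du_n - Du) \cdot (Du_n - Du)\, \chi_{\{|u_n-u|\le k\}}\,dx \;\le\; C\, k\, \|\psi\|_\infty ,
\]
with $C = \sup_n \|f_n\|_{\mathcal{R}(\mathrm{supp}\,\psi)}$. To get this, I would use $\varphi = \psi\, T_k(u_n - u)$ as a test function in the distributional identity $-\Div(M Du_n)=f_n$: the function is bounded, belongs to $H^1_0$, and has compact support in $\Omega$, so the pairing against the Radon measure is well defined. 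Since $DT_k(u_n - u) = (Du_n - Du)\chi_{\{|u_n - u| \le k\}}$, the Leibniz rule produces three terms on the left; adding and subtracting $Du$ in the principal one isolates the quadratic form of interest plus a cross term. As $n \to \infty$: the cross term with $MDu\cdot(Du_n - Du)\chi_{\{|u_n-u|\le k\}}$ vanishes by weak-strong convergence (its pre-factor converges strongly in $L^2$ by dominated convergence, while $Du_n - Du \rightharpoonup 0$ weakly in $L^2$); the term $\int T_k(u_n-u)\, MDu_n\cdot D\psi$ vanishes because $T_k(u_n - u)\to 0$ strongly in $L^2$ while $MDu_n\cdot D\psi$ remains bounded in $L^2$; and the right-hand side $\int \psi\, T_k(u_n - u)\,df_n$ is bounded in modulus by $k\,\|\psi\|_\infty\,\sup_n \|f_n\|_{\mathcal{R}}$. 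Uniform ellipticity then gives the displayed estimate.

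To conclude, I would fix a compact $K \Subset \Omega$ and choose $\psi \in C_c^\infty(\Omega)$ with $\psi \ge \chi_K$. The decomposition
\[
|Du_n - Du| = |Du_n - Du|\,\chi_{\{|u_n - u|\le k\}} + |Du_n - Du|\,\chi_{\{|u_n - u|> k\}}
\]
gives, via Cauchy-Schwarz and the core estimate, an $L^1(K)$-bound of order $\sqrt{k}$ on the first piece in the limit, while the second piece is controlled by $\|Du_n - Du\|_{L^2}\,|\{|u_n - u|>k\}|^{1/2}$, which tends to $0$ as $n\to\infty$ by Chebyshev together with $u_n \to u$ in $L^2_{\mathrm{loc}}$. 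Sending $n\to\infty$ first and then $k\to 0^+$ yields $Du_n \to Du$ in $L^1_{\mathrm{loc}}(\Omega)$, hence in measure, and a further subsequence converges almost everywhere. Vitali's theorem then upgrades this to the claimed $L^q(\Omega)$-convergence.

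The hardest point is the pairing $\int \psi\, T_k(u_n - u)\,df_n$: since $(f_n)$ is only bounded in $\mathcal{R}(\Omega)$, with no convergence assumed, this term does not vanish as $n\to\infty$ but only contributes an $O(k)$ error, which is precisely the obstruction that prevents strong convergence of $Du_n$ in $L^2$ and restricts the result to $q<2$. A subsidiary technical point is the admissibility of the test function against a Radon (rather than $H^{-1}$) datum; this relies on $\psi T_k(u_n - u)$ being a bounded, compactly supported function in $H^1_0 \cap L^\infty$, against which a Radon measure of finite total variation on $\mathrm{supp}\,\psi$ pairs unambiguously.
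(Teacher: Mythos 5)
The paper does not prove this theorem at all: it is quoted as a known result, with the proof delegated to Theorem $2.1$ of Boccardo--Murat \cite{BOCCARDO1992581}, which treats the more general Leray--Lions setting. Your proposal is, in substance, a correct specialization of that classical argument to the linear case: the test function $\psi\,T_k(u_n-u)$, the splitting of $MDu_n\cdot(Du_n-Du)$ into the quadratic form plus a cross term killed by weak--strong convergence, the $O(k)$ contribution of the measures, and the two-parameter limit ($n\to\infty$ first, then $k\to0^+$) giving $L^1_{\mathrm{loc}}$-convergence of the gradients, followed by equi-integrability of $|Du_n|^q$ and Vitali for $q<2$; all of these steps are sound, and your use of $L^2_{\mathrm{loc}}$ compactness avoids any boundary regularity of $\Omega$, consistent with the paper's standing assumptions. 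The one place where your write-up is too casual is exactly the point you flag: a Radon measure does \emph{not} pair unambiguously with an a.e.-defined bounded function (it may charge a Lebesgue-null set where representatives differ), so ``finite total variation on $\mathrm{supp}\,\psi$'' alone is not enough. The gap is routine to close here because $f_n=-\Div(MDu_n)$ with $MDu_n\in L^2$: mollify $\psi\,T_k(u_n-u)$, note that the mollifications are continuous, supported in a fixed compact subset of $\Omega$, uniformly bounded by $k\|\psi\|_\infty$, and converge in $H^1$, apply the measure bound to each of them, and pass to the limit in the identity $\langle f_n,\phi_\varepsilon\rangle=\int_\Omega MDu_n\cdot D\phi_\varepsilon\dex$; equivalently, one can invoke quasi-continuous representatives together with the fact that $f_n\in H^{-1}\cap\mathcal{R}$ does not charge sets of zero capacity. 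With that clarification your proof is complete and matches the strategy of the cited reference.
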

For the proof of (a more general version of) this result, we refer the reader to Theorem $2.1$ in \cite{BOCCARDO1992581}.

\begin{note}
Throughout the paper, the letter $C$ will denote various positive constants --- whose value may change from line to line --- depending on the data of the problem ($d$, $|\Omega|$, $\alpha$, $m$, $\gamma$, etc.), but never on the sequence indexes that we shall use in the proofs.
\end{note}
\section{Approximation scheme}\label{Approximation scheme}
To begin, we provide an approximation procedure for the first equation. Fix $0\le z\in L^\infty(\Omega)$. For any $n\in\N$, define $f_n(x)\Def T_n(f(x))$ and consider the problem:
\begin{equation}
\begin{cases}\label{approximated problem 1}
\displaystyle-\Div(A(x) D  u_n)+z^{1-\theta} u_n^{r-1}=\frac{f_n}{\big(u_n+\frac{1}{n}\big)^\gamma} &\mbox{in}\ \Omega\\
u_n>0 &\mbox{in}\ \Omega\\
u_n=0 &\mbox{on}\ \partial\Omega.
\end{cases}
\end{equation}
We show that \eqref{approximated problem 1} has a unique weak solution. Our proof exploits fixed point arguments, and it avoids any variational tool.
\begin{lemma}\label{approximated solution eq 1}
For every $n\in\N$, there exists a unique $u_n\in \HH(\Omega)\cap L^\infty(\Omega)$, weak solution to \eqref{approximated problem 1}, such that $u_n>0$ \Ae in $\Omega$ and $\|u_n\|_{\HH}+\|u_n\|_{L^\infty}\le C n^{1+\gamma}$ for all $n\in\N$, for some positive constant $C=C(d,|\Omega|,\alpha)$.
\end{lemma}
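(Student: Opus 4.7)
The plan is to apply Schauder's fixed-point theorem to a linearization of \eqref{approximated problem 1}. Fix $n \in \N$, let $C>0$ be the constant furnished by Theorem~\ref{cor stamacchia theorem} (for some fixed $p>\frac{d}{2}$, hence depending only on $d$, $|\Omega|$, $\alpha$), set $R \Def C\, n^{1+\gamma}$, and
\[
K \Def \{w \in L^2(\Omega) : 0 \le w \le R\ \text{\Ae in}\ \Omega\},
\]
which is a closed, convex, bounded subset of $L^2(\Omega)$. For each $w \in K$, let $u = T(w) \in \HH(\Omega)$ be the unique Lax--Milgram solution to the \emph{linear, coercive} problem
\[
-\Div(A(x) Du) + z^{1-\theta} w^{r-2}\, u = \frac{f_n}{\left(w+\tfrac{1}{n}\right)^{\gamma}}.
\]
The assumption $r\ge2$ makes $z^{1-\theta} w^{r-2}\ge0$, ensuring coercivity of the associated bilinear form, while the right-hand side is bounded by $n^{1+\gamma}$ in $L^\infty(\Omega)$.

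To show $T(K) \subseteq K$, I first test with $-u_-$: the non-negativity of the zero-order coefficient and of the right-hand side yields $\alpha\|Du_-\|_{L^2}^2 \le 0$, whence $u \ge 0$ \Ae in $\Omega$. Then, testing with $G_k(u)$ for any $k\ge0$ and dropping the non-negative zero-order contribution produces exactly the hypothesis of Theorem~\ref{cor stamacchia theorem} with $k_0=0$, so
\[
\|u\|_{\HH} + \|u\|_{L^\infty} \le C\left\|\frac{f_n}{\left(w+\tfrac{1}{n}\right)^{\gamma}}\right\|_{L^p}\le C\, n^{1+\gamma} = R,
\]
i.e., $u \in K$. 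Continuity of $T\colon L^2(\Omega)\to L^2(\Omega)$ on $K$ follows by extracting, from any $L^2$-convergent sequence $(w_k)\subset K$, an \Ae convergent subsequence: the uniform $L^\infty$-bounds on $T(w_k)$ and on the coefficient $z^{1-\theta} w_k^{r-2}$ together with the weak $\HH$-convergence of $T(w_k)$ enable passage to the limit in the weak formulation via dominated convergence. Since $T(K)$ is bounded in $\HH(\Omega)$, Rellich--Kondrachov makes it relatively compact in $L^2(\Omega)$, and Schauder's fixed-point theorem delivers $u_n = T(u_n) \in K$. In view of $u_n\ge 0$, this is a weak solution to \eqref{approximated problem 1}, with the announced quantitative bound.

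Uniqueness follows from the standard monotonicity trick: if $u_n^1, u_n^2$ are two solutions, testing the difference equation with $(u_n^1-u_n^2)_+$ yields $\alpha\|D(u_n^1-u_n^2)_+\|_{L^2}^2 \le 0$, because $s\mapsto s^{r-1}$ and $s\mapsto -(s+\tfrac{1}{n})^{-\gamma}$ are non-decreasing on $[0,\infty)$ and $f_n \ge 0$; by symmetry, $u_n^1 = u_n^2$. Strict positivity $u_n > 0$ \Ae in $\Omega$ follows from the strong maximum principle: rewriting the equation as $-\Div(ADu_n) + c(x)u_n = f_n/(u_n+\tfrac{1}{n})^\gamma$ with $c \Def z^{1-\theta} u_n^{r-2} \in L^\infty(\Omega)$, the right-hand side is non-negative and non-trivial (as $f_n = T_n(f)>0$ on $\{f>0\}$, a set of positive measure), so the weak Harnack inequality (Theorem $7.1.2$ in \cite{pucci-serrin}) forces $u_n > 0$ almost everywhere. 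The main obstacle, in my view, is precisely the design of the linearization: the $u^{r-1}$ absorption has to be split as $w^{r-2}\cdot u$, keeping $w^{r-2}$ as a \emph{bounded, non-negative coefficient} on the left-hand side. Any attempt to move the full absorption term to the right-hand side would make the Stampacchia bound on $T(w)$ scale like $\|w\|_{L^\infty}^{r-1}$, destroying the self-map property of $T$ whenever $r>2$.
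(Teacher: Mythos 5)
Your proof is correct, and its skeleton coincides with the paper's: freeze the nonlinearities at a function $w$, solve the resulting linear coercive problem by Lax--Milgram, get non-negativity by testing with the negative part, get the uniform bound $Cn^{1+\gamma}$ from the Stampacchia-type Theorem~\ref{cor stamacchia theorem} with $k_0=0$, apply Schauder, and conclude positivity via the weak Harnack inequality and uniqueness via the monotonicity of $s\mapsto s^{r-1}$ and $s\mapsto -(s+\tfrac1n)^{-\gamma}$. The one genuine difference is the choice of the fixed-point set. The paper takes the ball $B=\{\psi\in\HH(\Omega):\|\psi\|_{\HH}\le \kappa n^{1+\gamma}\}$; since elements of an $\HH$-ball are not uniformly bounded, the coefficient $|\psi|^{r-2}$ must be truncated, so the paper solves $-\Div(ADu_\psi)+z^{1-\theta}T_{k_n}(|\psi|^{r-2})u_\psi=F(\psi)$ with $k_n=\kappa^{r-2}n^{(r-2)(1+\gamma)}$, proves weak--strong continuity of $\psi\mapsto u_\psi$, and only at the fixed point uses the a priori $L^\infty$ bound to remove the truncation ($T_{k_n}(u_n^{r-2})u_n=u_n^{r-1}$). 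You instead work in the order interval $K=\{w\in L^2(\Omega): 0\le w\le R\}$, so the frozen coefficient $w^{r-2}$ is automatically bounded by $R^{r-2}$ and no truncation (nor any removal step afterwards) is needed; compactness comes from the $\HH$-bound on $T(K)$ plus Rellich, and continuity in $L^2$ from the a.e.-subsequence/dominated-convergence argument. Your route buys a slightly cleaner construction (no $T_{k_n}$, no de-truncation), at the small expense of the continuity argument being carried out in the $L^2$ topology with the subsequence--uniqueness trick, which you only sketch: to conclude continuity of the full sequence you should say explicitly that every subsequence of $(T(w_k))$ admits a further subsequence converging in $L^2$ to the \emph{same} limit $T(w)$, identified through the uniqueness of the Lax--Milgram solution. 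This is a presentational point, not a gap.
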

\begin{proof} Denote by $\kappa=\kappa(d,|\Omega|,\alpha)>0$ the constant in Theorem \ref{Stampacchia_theorem}. We divide the proof into steps.

\emph{Step $1$: construction of $u_n$.} Fix $n\in\N$ and let $k_n=\kappa^{r-2}n^{(r-2)(1+\gamma)}$.  Define $F(\psi)\Def f_n\big(|\psi|+\frac{1}{n}\big)^{-\gamma}$ for all $\psi\in\HH(\Omega)$. Since $F(\HH(\Omega))\subset L^\infty(\Omega)\subset L^2(\Omega)$, by the Lax-Milgram theorem there exists a unique weak solution $u_\psi\in\HH(\Omega)$ to
\begin{equation}\label{claim lemma 1}
-\Div(A(x) D  u_\psi)+z^{1-\theta }T_{k_n}(|\psi|^{r-2})u_\psi =F(\psi).
\end{equation}
Testing \eqref{claim lemma 1} against $(u_\psi)_-$ we get
\begin{align*}
0\le  \int\limits_\Omega F(\psi) (u_\psi)_-\dex&= \int\limits_\Omega A D  u_\psi\cdot D  (u_\psi)_-+z^{1-\theta} T_{k_n}(|\psi|^{r-2}) u_\psi(u_\psi)_-\dex\\
&\le -\alpha\int\limits_\Omega | D (u_\psi)_-|^2\dex\le0.
\end{align*}
It follows that $(u_\psi)_-=0$ \Ae in $\Omega$, i.e., $u_\psi\ge0$ \Ae in $\Omega$. Then, letting $k\ge0$ and choosing $G_k(u_\psi)$ as a test function in \eqref{claim lemma 1} produce
$$\int\limits_\Omega \alpha |D(G_k(u_\psi))|^2\dex\le\int\limits_\Omega \frac{f_n}{\big(|\psi|+\frac{1}{n}\big)^{\gamma}} G_k(u_\psi)\dex,$$
hence, since $f_n\big(|\psi|+\frac{1}{n}\big)^{-\gamma}\in L^\infty(\Omega)$ and $\|f_n\big(|\psi|+\frac{1}{n}\big)^{-\gamma}\|_{L^\infty}\le n^{1+\gamma}$, Theorem \ref{cor stamacchia theorem} ensures that $u_\psi\in L^\infty(\Omega)$ and
\begin{equation}\label{first a priori estimates}
\|u_{\psi}\|_{\HH}+\|u_{\psi}\|_{L^\infty}\le \kappa n^{1+\gamma}.
\end{equation}
Let $S\colon\HH(\Omega)\to\HH(\Omega)$ be defined as $S(\psi)=u_{\psi}$. By \eqref{first a priori estimates}, the ball $B=\{\psi\in\HH(\Omega) : \|\psi\|_{\HH}\le \kappa n^{1+\gamma}\}$ is invariant for $S$. We want to show that $S$ has a fixed point.

\emph{Sub-step $1.1$: $S$ is weakly--strongly continuous.} Let $\psi_j\rightharpoonup \psi$ in $\HH(\Omega)$ and, without loss of generality, assume that $\psi_j\to\psi$ in $L^p(\Omega)$ for all $1\le p<2^\star$. Taking advantage of \eqref{first a priori estimates}, there exists $\xi\in\HH(\Omega)$ such that, up to a subsequence, $(u_{\psi_j})$ converges to $\xi$ weakly in $\HH(\Omega)$, strongly in $L^p(\Omega)$ for all $1\le p<2^\star$, and \Ae in $\Omega$. Choosing $u_{\psi_j}-u_{\psi}$ in the weak formulations of the problems solved by $u_{\psi_j}$ and $u_\psi$ and subtracting the equations yield
\begin{multline}\label{inequality weak formulation}
\int\limits_\Omega A  D  (u_{\psi_j}-u_\psi)\cdot D  (u_{\psi_j}-u_\psi)\dex=\int\limits_\Omega (F(\psi_j)-F(\psi))(u_{\psi_j}-u_\psi)\dex\\-\int\limits_\Omega z^{1-\theta} \big(T_{k_n}(|\psi_j|^{r-2}) u_{\psi_j}-T_{k_n}(|\psi|^{r-2}) u_\psi\big)(u_{\psi_j}-u_\psi)\dex.
\end{multline}
Since $(u_{\psi_j}-u_\psi)$ is bounded in $L^\infty(\Omega)$, by Lebesgue's dominated convergence theorem one deduces that $(F(\psi_j)-F(\psi))(u_{\psi_j}-u_\psi)\to0$ and $z^{1-\theta} T_{k_n}(|\psi_j|^{r-2}) u_{\psi_j}\to z^{1-\theta} T_{k_n}(|\psi|^{r-2}) \xi$ in $L^1(\Omega)$. Then, \eqref{inequality weak formulation} entails 
$$\limsup_{n\to\infty}\int\limits_\Omega A  D  (u_{\psi_j}-u_\psi)\cdot D  (u_{\psi_j}-u_\psi)\dex=-\int\limits_\Omega z^{1-\theta} T_{k_n}(|\psi|^{r-2})(\xi-u_\psi)^2\dex\le0,$$
whence
$$\lim_{n\to\infty}\int\limits_\Omega A  D  (u_{\psi_j}-u_\psi)\cdot D  (u_{\psi_j}-u_\psi)\dex=0.$$
By the uniform ellipticity of $A(\cdot)$,
it follows that $u_{\psi_j}\to u_\psi$ in $\HH(\Omega)$, and this is enough to conclude that $S$ is weakly--strongly continuous.

Since $S$ is compact and continuous from $B$ into $B$, by the Schauder fixed point theorem there exists $0\le u_n\in B\subset \HH(\Omega)\cap L^\infty(\Omega)$ such that $u_n=S(u_n)$, namely,
$$\int\limits_\Omega A D  u_n\cdot D \varphi+ z^{1-\theta} T_{k_n}(u_n^{r-2})u_n\varphi\dex=\int\limits_\Omega \frac{f_n}{\big(u_n+\frac{1}{n}\big)^\gamma} \varphi\dex$$
for all $\varphi\in\HH(\Omega)$. Then, to prove that $u_n$ is a weak solution to \eqref{approximated problem 1}, it suffices to note that, by \eqref{first a priori estimates}, $u_n^{r-2}\le \kappa^{r-2}n^{(r-2)(1+\gamma)}=k_n$, hence $T_{k_n}(u_n^{r-2})u_n=u_n^{r-1}$.

\emph{Step $2$: $u_n$ is positive.} We now show that $u_n>0$ \Ae in $\Omega$. Note that
$$-\Div(A(x) D u_n)+\|z\|_{L^\infty}^{1-\theta}u_n^{r-1}\ge \frac{f_n}{\big(\kappa n^{1+\gamma}+\frac{1}{n}\big)^\gamma}\ge0.$$
Then, taking advantage of the boundedness of $u_n$, and recalling that $f_n\not\equiv0$, by the weak Harnack inequality (see Theorem $1.2$ in \cite{Trudinger1967}) we conclude that $u_n>0$ \Ae in $\Omega$.

\emph{Step $3$: uniqueness.} Finally, we observe that $u_n$ is unique. Indeed, if $U_n$ is another positive solution, one has
\begin{gather*}
\int\limits_\Omega ADu_n\cdot D(u_n-U_n)_++z^{1-\theta} u_n^{r-1}(u_n-U_n)_+\dex=\int\limits_\Omega \frac{f_n}{\big(u_n+\frac{1}{n}\big)^\gamma}(u_n-U_n)_+\dex\\
\int\limits_\Omega ADU_n\cdot D(u_n-U_n)_++z^{1-\theta} U_n^{r-1}(u_n-U_n)_+\dex=\int\limits_\Omega \frac{f_n}{\big(U_n+\frac{1}{n}\big)^\gamma}(u_n-U_n)_+\dex,
\end{gather*}
whence, taking the difference,
\begin{multline*}
\alpha\int\limits_\Omega |D(u_n-U_n)_+|^2\dex\le \int\limits_\Omega z^{1-\theta}(U_n^{r-1}-u_n^{r-1})(u_n-U_n)_+\dex\\
+ \int\limits_\Omega f_n\bigg(\frac{1}{\big(u_n+\frac{1}{n}\big)^\gamma}-\frac{1}{\big(U_n+\frac{1}{n}\big)^\gamma}\bigg)(u_n-U_n)_+\dex.
\end{multline*}
Since the maps $s\mapsto (s+1/n)^{-\gamma}$, $s\mapsto s^{r-1}$ are increasing in $[0,+\infty)$, the previous inequality entails
$$0\le\alpha\int\limits_\Omega |D(u_n-U_n)_+|^2\dex\le0,$$
hence $(u_n-U_n)_+=0$, i.e., $u_n\le U_n$. Likewise, $U_n\le u_n$. Therefore, $U_n=u_n$, and the solution is unique.
\end{proof}
\begin{lemma}(Approximation)\label{approximation lemma}
For every $n\in\N$, there exists $(u_n,v_n)\in\big(\HH(\Omega)\cap L^\infty(\Omega)\big)^2$ such that $u_n,v_n>0$ \Ae in $\Omega$ and
\begin{numcases}{}
$$\displaystyle\int\limits_\Omega A D  u_n\cdot D  \varphi+ v_n^{1-\theta} u_n^{r-1}\varphi\dex=\int\limits_\Omega\frac{f_n}{\big(u_n+\frac{1}{n}\big)^\gamma}\varphi\dex$$\label{appr eq1}\\
$$\displaystyle \int\limits_\Omega A D  v_n\cdot D \psi\dex=\int\limits_\Omega\frac{u^r_n}{\big(v_n+\frac{1}{n}\big)^{\theta}}\psi\dex$$\label{appr eq2}
\end{numcases}
for all $\varphi,\psi\in\HH(\Omega)$.
\end{lemma}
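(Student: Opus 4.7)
The plan is to realize $(u_n,v_n)$ as a fixed point of a suitable operator $T$ on $\HH(\Omega)$, built by composing the solution map of the approximated second equation with the solution map for the first equation provided by Lemma~\ref{approximated solution eq 1}. Fix a threshold $k\ge\kappa n^{1+\gamma}$, with $\kappa=\kappa(d,|\Omega|,\alpha)$ as in Lemma~\ref{approximated solution eq 1}, and to each $\phi\in\HH(\Omega)$ associate first a solution $v_\phi$ to
\begin{equation*}
-\Div(A(x)Dv_\phi)=\frac{T_k(|\phi|)^r}{(v_\phi+\tfrac{1}{n})^{\theta}},
\end{equation*}
whose right-hand side is bounded in $L^\infty(\Omega)$ by $k^rn^{\theta}$ once $v_\phi\ge0$ is enforced; the same Lax--Milgram plus Schauder scheme used in Steps~1--2 of Lemma~\ref{approximated solution eq 1} (auxiliary truncation to linearize the singular denominator, removed a posteriori via the resulting a priori $L^\infty$ bound) produces a unique $v_\phi\in\HH(\Omega)\cap L^\infty(\Omega)$, $v_\phi\ge0$ a.e., with $\|v_\phi\|_{\HH}+\|v_\phi\|_{L^\infty}\le C_{k,n}$. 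Then Lemma~\ref{approximated solution eq 1} applied with $z=v_\phi$ delivers a unique $u_{v_\phi}\in\HH(\Omega)\cap L^\infty(\Omega)$, strictly positive a.e., with the crucial \emph{$\phi$-independent} estimate
\begin{equation*}
\|u_{v_\phi}\|_{\HH}+\|u_{v_\phi}\|_{L^\infty}\le\kappa n^{1+\gamma}.
\end{equation*}
Setting $T(\phi):=u_{v_\phi}$ and $B:=\{\phi\in\HH(\Omega):\|\phi\|_{\HH}\le\kappa n^{1+\gamma}\}$, we have $T(B)\subset B$, and the choice of $k$ forces $T_k(u_{v_\phi})=u_{v_\phi}$ at any candidate fixed point, so the truncation becomes inactive there.

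The technical heart of the argument is the weak--strong continuity of $T$. Given $\phi_j\rightharpoonup\phi$ in $\HH(\Omega)$, compactness yields $\phi_j\to\phi$ in every $L^p(\Omega)$, $p<2^\star$, and almost everywhere along a subsequence. First I would upgrade this to strong $\HH$-convergence $v_{\phi_j}\to v_\phi$ by testing the difference of the equations against $v_{\phi_j}-v_\phi$: the uniform $L^\infty$ bound on the $v_{\phi_j}$, the a.e.\ convergence $T_k(|\phi_j|)^r\to T_k(|\phi|)^r$ with integrable dominating constant $k^rn^{\theta}$, and Lebesgue's dominated convergence theorem give, exactly as in Sub-step~1.1 of Lemma~\ref{approximated solution eq 1},
\begin{equation*}
\lim_{j\to\infty}\int\limits_\Omega AD(v_{\phi_j}-v_\phi)\cdot D(v_{\phi_j}-v_\phi)\dex=0.
\end{equation*}
Mimicking that same sub-step for the first equation---now the perturbation $v_{\phi_j}^{1-\theta}$ converges to $v_\phi^{1-\theta}$ in every $L^p(\Omega)$ by dominated convergence and the uniform $L^\infty$ bound---I would deduce $u_{v_{\phi_j}}\to u_{v_\phi}$ strongly in $\HH(\Omega)$, i.e.\ weak--strong continuity of $T$. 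Schauder's fixed point theorem then furnishes $u_n\in B$ with $u_n=T(u_n)=u_{v_{u_n}}$; setting $v_n:=v_{u_n}$, identities \eqref{appr eq1}--\eqref{appr eq2} follow because $T_k$ is inactive at $u_n$. Positivity of $v_n$ then comes from the weak Harnack inequality applied to the second equation, since $u_n>0$ a.e.\ (by Lemma~\ref{approximated solution eq 1}) makes the source $u_n^r(v_n+\tfrac{1}{n})^{-\theta}$ non-trivial.

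The main obstacle I anticipate is precisely this two-stage continuity argument: propagating strong convergence through the composition $\phi\mapsto v_\phi\mapsto u_{v_\phi}$ while controlling at each stage both the singular denominator $(v+\tfrac{1}{n})^{-\theta}$ in the source of the second equation and the reaction $v^{1-\theta}u^{r-1}$ in the first. What makes the step feasible is the uniform $L^\infty$ control available at each level---supplied by Stampacchia's theorem together with the regularization parameter $n$---which reduces every singular reaction to a uniformly bounded perturbation of a linear uniformly elliptic problem and thereby enables dominated convergence throughout.
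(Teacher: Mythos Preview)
Your proposal is correct and follows essentially the same route as the paper: build $v_\phi$ from the truncated second equation, feed $z=v_\phi$ into Lemma~\ref{approximated solution eq 1} to obtain $u_{v_\phi}$ with the $\phi$-independent bound $\|u_{v_\phi}\|_{\HH}+\|u_{v_\phi}\|_{L^\infty}\le\kappa n^{1+\gamma}$, establish weak--strong continuity of the composition $\phi\mapsto u_{v_\phi}$ by a two-stage dominated-convergence argument, apply Schauder, remove the truncation via the choice of $k$, and deduce $v_n>0$ from the weak Harnack inequality. The only cosmetic difference is that the paper first extracts a weak/a.e.\ limit of $(v_{\phi_j})$ and identifies it with $v_\phi$ by uniqueness before testing the difference, whereas you test the difference directly; both variants go through thanks to the uniform $L^\infty$ bounds and the monotonicity of $s\mapsto(s+\tfrac{1}{n})^{-\theta}$.
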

\begin{proof}\let\qed\relax
Given $k\ge0$ and $\phi\in\HH(\Omega)$, by the same argument that proves Lemma \ref{approximated solution eq 1}, and by the strong maximum principle for the operator $-\Div(A(x)D \cdot)$, there exists $0\le v_\phi\in\HH(\Omega)$ that solves
$$-\Div(A(x) D  v_\phi)=\frac{T_k(|\phi|)^r}{\big(v_\phi+\frac{1}{n}\big)^{\theta}}$$
in the weak sense; this solution turns out to be unique, due to the strict monotonicity of $s\mapsto s^{{-\theta}}$ in $[0,+\infty)$. Moreover, by Theorem \ref{Stampacchia_theorem}, $v_{\phi}\in L^\infty(\Omega)$, and $\|v_\phi\|_{\HH}+\|v_\phi\|_{L^\infty}\le C k^r n^{\theta}$. By Lemma \ref{approximated solution eq 1}, there is a unique weak solution $0<u_\phi\in \HH(\Omega)\cap L^\infty(\Omega)$ to
$$-\Div(A(x) D  u_\phi)+v_\phi^{1-\theta} u_\phi^{r-1}=\frac{f_n(x)}{\big(u_\phi+\frac{1}{n}\big)^\gamma}$$
that satisfies $\|u_n\|_{L^\infty}\le Cn^{1+\gamma}$. Now, we make the following:
\vspace{-0.1cm}
\begin{claim}\label{claim compactness}
The map $S\colon\HH(\Omega)\to\HH(\Omega)$ defined by $S(\phi)=u_\phi$ is compact and continuous.
\end{claim}
\vspace{-0.1cm}
Once the claim is established, exploiting Schauder's fixed point theorem and choosing $k>Cn^{1+\gamma}$, one proves the existence of $u_n,v_n\in\HH(\Omega)\cap L^\infty(\Omega)$ that fulfil \eqref{appr eq1} and \eqref{appr eq2} for any $\varphi,\psi\in\HH(\Omega)$, and such that $u_n>0$ and $v_n\ge0$ \Ae in $\Omega$. The assertion on the strict positivity of $v_n$ follows from the weak Harnack inequality for the linear operator $-\Div(A(x)D \cdot)$, noting that
$$\int\limits_\Omega ADv_n\cdot D\psi\dex\ge0\quad\forall\psi\in\HH(\Omega),\,\psi\ge0,$$
and that $u_n^r\not\equiv0$ (see Theorem $8.18$ in \cite{gilbarg2001elliptic}). To conclude, we have to prove Claim \ref{claim compactness}.
\begin{proof}[Proof of Claim \ref{claim compactness}]
We show that, if $\phi_j\rightharpoonup \phi$ weakly in $\HH(\Omega)$, then, up to subsequences, $S(\phi_j)\to S(\phi)$ strongly in $\HH(\Omega)$. With this aim, take $(\phi_j)$ that converges to $\phi$ weakly in $\HH(\Omega)$ and, without loss of generality, \Ae in $\Omega$. Thanks to the uniform bounds on $(v_{\phi_j})$ and $(u_{\phi_j})$, there exist $z,\xi\in\HH(\Omega)$ such that, up to non-relabelled subsequences, $(v_{\phi_j})$ and $(u_{\phi_j})$ converge, respectively, to $z$ and $\xi$ weakly in $\HH(\Omega)$, strongly in $L^p(\Omega)$ for all $1\le p<2^\star$, and \Ae in $\Omega$. Clearly, $z,\xi\ge0$ \Ae in $\Omega$. Since
\begin{gather*}
\frac{T_k(|\phi_j|)^r}{\big(v_{\phi_j}+\frac{1}{n}\big)^{\theta}}\le n^\theta k^r,\\
\frac{f_n}{\big(u_{\phi_j}+\frac{1}{n}\big)^\gamma}+v_{\phi_j}^{1-\theta} u_{\phi_j}^{r-1}\le n^{1+\gamma}+Ck^{r(1-\theta)}n^{\theta(1-\theta)}n^{(r-1)(1+\gamma)},
\end{gather*}
by the dominated convergence theorem one has
\begin{gather*}
\frac{T_k(|\phi_j|)^r}{\big(v_{\phi_j}+\frac{1}{n}\big)^{\theta}}\to \frac{T_k(|\phi|)^r}{\big(z+\frac{1}{n}\big)^{\theta}},\quad \text{strongly in}\ L^q(\Omega),\\
\frac{f_n}{\big(u_{\phi_j}+\frac{1}{n}\big)^\gamma}-v_{\phi_j}^{1-\theta} u_{\phi_j}^{r-1}\to \frac{f_n}{\big(\xi+\frac{1}{n}\big)^\gamma}-z^{1-\theta} \xi^{r-1}\quad \text{strongly in}\ L^q(\Omega),
\end{gather*}
for any $1\le q<\infty$.
Given that $v_{\phi_j}\rightharpoonup z$ in $\HH(\Omega)$, it follows that
$$\int\limits_\Omega ADz\cdot D\psi\dex=\int\limits_\Omega \frac{T_k(|\phi|)^r}{\big(z+\frac{1}{n}\big)^{\theta}}\psi\dex\quad\forall\psi\in\HH(\Omega),$$
whence, by uniqueness, $z=v_\phi$. Taking $v_{\phi_j}-v_\phi$ as a test function in the weak formulation of the problems solved by $v_{\phi_j}$ and $v_\phi$ yields
\begin{align*}
\alpha\int\limits_\Omega |D(v_{\phi_j}-v_\phi)|^2\dex&\le\int\limits_\Omega AD(v_{\phi_j}-v_\phi)\cdot D(v_{\phi_j}-v_\phi)\dex\\
&=\int\limits_\Omega\bigg(\frac{T_k(|\phi_j|)^r}{\big(v_{\phi_j}+\frac{1}{n}\big)^{\theta}}-\frac{T_k(|\phi_j|)^r}{\big(v_\phi+\frac{1}{n}\big)^{\theta}}\bigg)(v_{\phi_j}-v_\phi)\dex.
\end{align*}
Thanks again to the uniform $L^\infty$--bound on $(v_{\phi_j})$, by the dominated convergence theorem the integral on the r\text{.}h\text{.}s\text{.} vanishes as $j\to\infty$, hence $v_{\phi_j}\to v_{\phi}$ strongly in $\HH(\Omega)$. Then, by the weak convergence of $(u_{\phi_j})$ to $\xi$,
\begin{equation*}
\int\limits_\Omega AD\xi\cdot D\varphi+v_\phi^{1-\theta}\xi^{r-1}\varphi\dex=\int\limits_\Omega \frac{f_n}{\big(\xi+\frac{1}{n}\big)^{\gamma}}\varphi\dex\quad\forall\varphi\in\HH(\Omega),
\end{equation*}
and, by uniqueness, $\xi=u_\phi$. Therefore,
taking $u_{\phi_j}-u_\phi$ as test function in the problems solved by $u_{\phi_j}$ and $u_{\phi}$, and arguing as before, one easily deduces that $u_{\phi_j}\to u_\phi$ strongly in $\HH(\Omega)$, and the proof is concluded. \hfill $\square$
\end{proof}
\end{proof}
We conclude this section proving some regularity estimates on $(u_n)$. Part of our argument is inspired by \cite{BoccardoOrsina2010}; nevertheless, being our problem slightly different, for the reader's convenience we present a full proof.
\begin{lemma}\label{a priori estimates on (un)}
Let $(u_n)$ be the sequence given by Lemma \ref{approximation lemma}. Then:\\
$(i)$ If $m>\frac{d}{2}$, then there exists a constant $C=C(d,|\Omega|,\alpha,m)>0$ such that
$$\|u_n\|_{L^\infty}\le 1+C\|f\|_{L^m}.$$
$(ii)$ If $1\le m<\frac{d}{2}$, then there exists a constant $C=C(d,|\Omega|,\alpha,m,\gamma)>0$ such that
$$\|u_n\|_{L^{m^{\star\star}(1+\gamma)}}\le C\|f\|_{L^m}^{\frac{1}{1+\gamma}}.$$
$(iii)$ If $m=\frac{d}{2}$, then for every $1\le p<\infty$ there exists a constant $C=C(d,|\Omega|,\alpha,\gamma, p)>0$ such that
$$\|u_n\|_{L^p}\le C\|f\|_{L^{\frac{d}{2}}}^{\frac{1}{1+\gamma}}.$$
\end{lemma}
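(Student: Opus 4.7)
The plan is to prove each of the three bounds by testing the first equation of Lemma \ref{approximation lemma} against a suitable non-negative function $\varphi$, which allows me to drop the non-negative absorption term $v_n^{1-\theta}u_n^{r-1}\varphi\ge0$ and reduce to a single-equation Stampacchia-type analysis. Throughout, I will exploit two simple pointwise bounds: $f_n\le f$, and $(u_n+\tfrac{1}{n})^\gamma\ge u_n^\gamma\ge1$ on $\{u_n\ge1\}$ (which lets me dominate the singular nonlinearity by $f$).

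\textbf{Part (i), $m>d/2$.} I would test with $\varphi=G_k(u_n)$ for $k\ge 1$. Since $\{G_k(u_n)>0\}\subseteq\{u_n\ge1\}$, the right-hand side satisfies $f_n/(u_n+\tfrac{1}{n})^\gamma\le f$ on the support of $\varphi$, so
$$\alpha\int\limits_\Omega|DG_k(u_n)|^2\le\int\limits_\Omega f\,G_k(u_n)\qquad\forall k\ge1,$$
and Theorem \ref{cor stamacchia theorem} with $k_0=1$ gives the claimed $L^\infty$ bound.

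\textbf{Part (ii), $1\le m<d/2$.} The key choice is $\varphi_n:=(u_n+\tfrac{1}{n})^s-(\tfrac{1}{n})^s\in\HH(\Omega)\cap L^\infty(\Omega)$ with
$$s:=\frac{d(m-1)+\gamma m(d-2)}{d-2m}\ge\gamma>0,$$
tailored so that the algebraic identity
$$\tfrac{s+1}{2}\cdot 2^\star=(s-\gamma)m'=m^{\star\star}(1+\gamma)$$
holds; the exponent $m^{\star\star}(1+\gamma)$ then appears on \emph{both} sides of the inequality obtained after Sobolev and H\"older. Dropping the absorption term and using $\varphi_n\le(u_n+\tfrac{1}{n})^s$ on the right, the chain rule produces
$$\tfrac{4s\alpha}{(s+1)^2}\int\limits_\Omega\bigl|D(u_n+\tfrac{1}{n})^{(s+1)/2}\bigr|^2\le\int\limits_\Omega f(u_n+\tfrac{1}{n})^{s-\gamma}.$$
Since $(u_n+\tfrac{1}{n})^{(s+1)/2}-(\tfrac{1}{n})^{(s+1)/2}\in\HH(\Omega)$ has the same gradient, Sobolev and H\"older yield
$$\bigl\|(u_n+\tfrac{1}{n})^{(s+1)/2}-(\tfrac{1}{n})^{(s+1)/2}\bigr\|_{L^{2^\star}}^2\le C\|f\|_{L^m}\|u_n+\tfrac{1}{n}\|_{L^{m^{\star\star}(1+\gamma)}}^{s-\gamma}.$$
Splitting $\Omega=\{u_n\ge 1\}\cup\{u_n<1\}$ to discard the boundary-level term $(1/n)^{(s+1)/2}\le1$ (on the first set it is negligible relative to $(u_n+1/n)^{(s+1)/2}$; on the second the $L^{m^{\star\star}(1+\gamma)}$-norm is controlled by $|\Omega|$), with $Y:=\|u_n\|_{L^{m^{\star\star}(1+\gamma)}}$ I obtain
$$Y^{m^{\star\star}(1+\gamma)}\le C\|f\|_{L^m}^{2^\star/2}Y^{(s-\gamma)2^\star/2}+C.$$
The exponent gap is exactly $m^{\star\star}(1+\gamma)-(s-\gamma)\tfrac{2^\star}{2}=(1+\gamma)\tfrac{2^\star}{2}$, so a standard absorption argument gives $Y\le C\|f\|_{L^m}^{1/(1+\gamma)}$.

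\textbf{Part (iii), $m=d/2$.} I would reduce to (ii). Given $1\le p<\infty$, pick $m_p\in[1,d/2)$ with $m_p^{\star\star}(1+\gamma)\ge p$; this is possible because $m^{\star\star}\to\infty$ as $m\nearrow d/2$. Boundedness of $\Omega$ gives $\|f\|_{L^{m_p}}\le C(m_p,|\Omega|)\|f\|_{L^{d/2}}$, and combining (ii) with the continuous embedding $L^{m_p^{\star\star}(1+\gamma)}(\Omega)\hookrightarrow L^p(\Omega)$ yields the stated estimate.

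\textbf{Main obstacle.} The delicate part is (ii), where two issues intertwine: choosing the exponent $s$ that simultaneously matches Sobolev on the left and H\"older on the right with the \emph{same} target power $m^{\star\star}(1+\gamma)$, and managing the shift $1/n$ that forces one to work with $(u_n+1/n)^{(s+1)/2}-(1/n)^{(s+1)/2}\in\HH(\Omega)$ instead of the more natural (but boundary-nonvanishing) $(u_n+1/n)^{(s+1)/2}$. A slightly cleaner route via $u_n^s$ as test function is available when $s\ge 1$, but for $m$ close to $1$ one has $s<1$ and $u_n^s$ may fail to lie in $\HH(\Omega)$, forcing the shifted version throughout.
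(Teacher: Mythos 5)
Parts (i) and (iii) match the paper's argument: for (i) you test with $G_k(u_n)$, $k\ge1$, and invoke Theorem \ref{cor stamacchia theorem}; for (iii) you reduce to (ii) by choosing $m_p<\frac{d}{2}$ with $m_p^{\star\star}(1+\gamma)\ge p$ and using $\|f\|_{L^{m_p}}\le C\|f\|_{L^{d/2}}$. Part (ii) also rests on the same core idea, and your exponent $s$ is exactly the paper's $\delta$, the solution of $2^\star\frac{\delta+1}{2}=(\delta-\gamma)m'$. The gap is in your closing step. Freezing the shift at $\frac{1}{n}$ and then converting everything into an inequality for $Y=\|u_n\|_{L^q}$ with $q=m^{\star\star}(1+\gamma)$ unavoidably leaves an additive constant: comparing $(u_n+\frac{1}{n})^{(s+1)/2}-(\frac{1}{n})^{(s+1)/2}$ with $u_n^{(s+1)/2}$, and $\|u_n+\frac{1}{n}\|_{L^q}$ with $Y$, each costs a term that, for a fixed $n$, neither vanishes nor scales with $\|f\|_{L^m}$. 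This is why you end up with $Y^q\le C\|f\|_{L^m}^{2^\star/2}Y^{p}+C$ with $p=(s-\gamma)\frac{2^\star}{2}<q$. From such an inequality absorption only yields $Y\le C'\max\bigl(\|f\|_{L^m}^{1/(1+\gamma)},1\bigr)$, which is strictly weaker than the claimed $Y\le C\|f\|_{L^m}^{1/(1+\gamma)}$ whenever $\|f\|_{L^m}$ is small; the stated bound is scaling-sharp and an additive $1$ breaks it. So the ``standard absorption argument'' does not deliver the lemma as written.

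The paper's proof is immune because it splits cases in exactly the way you gesture at in your final remark. When $m\ge\big(\frac{2^\star}{1-\gamma}\big)'$, one has $s=\delta\ge1$, so $u_n^\delta\in\HH(\Omega)$ is an admissible test function with no shift at all; Sobolev and H\"older then give the \emph{clean} inequality $\|u_n\|_{L^q}^{\delta+1}\le C\|f\|_{L^m}\|u_n\|_{L^q}^{\delta-\gamma}$, hence $\|u_n\|_{L^q}^{1+\gamma}\le C\|f\|_{L^m}$ with no absorption and no spurious constant. When $1\le m<\big(\frac{2^\star}{1-\gamma}\big)'$ (so $\delta<1$), the paper tests with a shift $\varepsilon\in(0,\frac{1}{n})$ and sends $\varepsilon\to0$: in the limit the boundary-level term vanishes and the same clean power inequality is recovered. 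The minimal repair of your argument is to replace the frozen shift $\frac{1}{n}$ by a variable $\varepsilon\in(0,\frac{1}{n})$ throughout and let $\varepsilon\to0$ \emph{before} any absorption --- at which point no absorption is needed at all.
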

\begin{proof}
$(i)$ Assume $m>\frac{d}{2}$. Since $T_n(f)\le f$, given $k\ge 1$, taking $G_k(u_n)$ as a test function in \eqref{appr eq1} yields
$$\alpha\int\limits_\Omega | D  G_k(u_n)|^2\dex\le\int\limits_\Omega f G_k(u_n)\dex.$$
Thanks to the integrability assumption on $f$, by Theorem \ref{cor stamacchia theorem} there exists a positive constant $C=C(d,|\Omega|,\alpha,m)$ such that
\begin{equation*}
\|u_n\|_{L^\infty}\le 1+C\|f\|_{L^m}.
\end{equation*}

$(ii)$ Assume $\big(\frac{2^\star}{1-\gamma}\big)'\le m<\frac{d}{2}$. Here, we follow the argument in Lemma $5.5$ in \cite{BoccardoOrsina2010}. If $m=\big(\frac{2^\star}{1-\gamma}\big)'$, the conclusion follows from the Sobolev embedding, since in this case
$m^{\star\star}=\frac{2^\star}{1+\gamma}$. If $\big(\frac{2^\star}{1-\gamma}\big)'< m<\frac{d}{2}$, testing \eqref{appr eq1} against $u_n^{\delta}$, $\delta>1$, we get
\begin{align}\label{passage}
\alpha\delta\int\limits_\Omega |Du_n|^2 u_n^{\delta-1}\dex&\le\int\limits_\Omega ADu_n\cdot Du_n \delta u_n^{\delta-1} +v_n^{1-\theta}u_n^{r-1+\delta}\dex\notag\\
&=\int\limits_\Omega \frac{f_n}{\big(u_n+\frac{1}{n}\big)^\gamma}u_n^\delta\dex\le\int\limits_\Omega f_n u_n^{\delta-\gamma}\dex.
\end{align}
Noting that
$$|D(u_n^{\frac{\delta+1}{2}})|^2=\bigg(\frac{\delta+1}{2}\bigg)^2u_n^{\delta-1}|Du_n|^2,$$
\eqref{passage} entails
$$\alpha\delta\frac{4}{(\delta+1)^2}\int\limits_\Omega |D(u_n^{\frac{\delta+1}{2}})|^2\dex\le\int\limits_\Omega f_n u_n^{\delta-\gamma}\dex.$$
Using Sobolev's inequality on the left and H\"older's inequality on the right produces
$$\alpha\delta\frac{4}{(\delta+1)^2\mathcal{S}^2}\bigg(\int\limits_\Omega u_n^{2^\star\frac{\delta+1}{2}}\dex\bigg)^{\frac{2}{2^\star}}\le \|f\|_{L^m} \bigg(\int\limits_\Omega u_n^{(\delta-\gamma)m'}\dex\bigg)^{\frac{1}{m'}},$$
where $\mathcal{S}$ is the best constant in the Sobolev embedding. Now, we choose $\delta>1$ in such a way that
\begin{equation}\label{choice of delta in ii}
2^\star\frac{\delta+1}{2}=(\delta-\gamma)m',
\end{equation}
which is possible because $m'(1-\gamma)<2^\star$. This implies $2^\star\frac{\delta+1}{2}=m^{\star\star}(1+\gamma)$, and the conclusion follows taking 
\begin{equation}\label{constant case ii}
C=\bigg(\frac{(\delta+1)^2}{4\alpha\delta \mathcal{S}^2}\bigg)^\frac{1}{1+\gamma}.
\end{equation}
Now, assume $1\le m<\big(\frac{2^\star}{1-\gamma}\big)'$. Here, we follow Theorem $5.6$ in \cite{BoccardoOrsina2010}. Taking $(u_n+\varepsilon)^\delta$ as a test function in \eqref{appr eq1}, $0<\varepsilon<\frac{1}{n}$ and $\gamma<\delta<1$, and arguing as in the case $\big(\frac{2^\star}{1-\gamma}\big)'\le m<\frac{d}{2}$, one deduces that
$$\alpha\delta\frac{4}{(\delta+1)^2\mathcal{S}^2}\bigg(\int\limits_\Omega (u_n+\varepsilon)^{2^\star\frac{\delta+1}{2}}\dex\bigg)^{\frac{2}{2^\star}}\le \|f\|_{L^m} \bigg(\int\limits_\Omega (u_n+\varepsilon)^{(\delta-\gamma)m'}\dex\bigg)^{\frac{1}{m'}}.$$
Letting $\varepsilon$ tend to $0$, we obtain again
$$\alpha\delta\frac{4}{(\delta+1)^2\mathcal{S}^2}\bigg(\int\limits_\Omega u_n^{2^\star\frac{\delta+1}{2}}\dex\bigg)^{\frac{2}{2^\star}}\le \|f\|_{L^m} \bigg(\int\limits_\Omega u_n^{(\delta-\gamma)m'}\dex\bigg)^{\frac{1}{m'}}.$$
If we take $\gamma<\delta<1$ such that $2^\star\frac{\delta+1}{2}=(\delta-\gamma)m'$, the conclusion follows noting that, with this choice, $2^\star\frac{\delta+1}{2}=m^{\star\star}(1+\gamma)$.

$(iii)$ Let $m=\frac{d}{2}$. Let $1\le p<\infty$ and choose $\varepsilon>0$ such that
\begin{equation}\label{choice of epsilon in iii}
\frac{d}{2}-\varepsilon>\bigg(\frac{2^\star}{1-\gamma}\bigg)',\quad \bigg(\frac{d}{2}-\varepsilon\bigg)^{\star\star}(1+\gamma)\ge p.
\end{equation}
Since $f\in L^{\frac{d}{2}-\varepsilon}(\Omega)$, by the previous case it follows that $(u_n)$ is bounded in $L^{(\frac{d}{2}-\varepsilon)^{\star\star}(1+\gamma)}(\Omega)$. In particular, set $q_\varepsilon=(\frac{d}{2}-\varepsilon)^{\star\star}(1+\gamma)$, we deduce
$$\|u_n\|_{L^p}\le |\Omega|^{\frac{1}{p}-\frac{1}{q_\varepsilon}}\|u_n\|_{L^{q_\varepsilon}}\le c|\Omega|^{\frac{1}{p}-\frac{1}{q_\varepsilon}} \|f\|_{L^{\frac{d}{2}-\varepsilon}}^{\frac{1}{1+\gamma}}\le c|\Omega|^{\frac{1}{p}-\frac{1}{q_\varepsilon}}|\Omega|^{\frac{1}{1+\gamma}(\frac{2}{d-2\varepsilon}-\frac{2}{d})} \|f\|_{L^{\frac{d}{2}}}^{\frac{1}{1+\gamma}}.$$
where $c=c(d,|\Omega|,\alpha,\varepsilon,\gamma)>0$ comes from \eqref{constant case ii}. The conclusion follows taking
\begin{equation}\label{choice of C in iii}
C=c|\Omega|^{\frac{1}{p}-\frac{1}{q_\varepsilon}}|\Omega|^{\frac{1}{1+\gamma}(\frac{2}{d-2\varepsilon}-\frac{2}{d})}
\end{equation}
and noting that the $\varepsilon$ chosen in \eqref{choice of epsilon in iii} depends on $p$, $d$, and $\gamma$.
\end{proof}
\begin{rmk}
The constant $C$ given by the proof of $(iii)$ in Lemma \ref{a priori estimates on (un)} blows up as $p\to\infty$; in fact, by \eqref{choice of C in iii}, \eqref{choice of epsilon in iii}, \eqref{constant case ii}, and \eqref{choice of delta in ii}, it is easy to show that $C$ behaves as $p^{\frac{1}{1+\gamma}}$ as $p\to\infty$. Hence, we cannot deduce that $(u_n)$ is bounded in $L^\infty(\Omega)$.
\end{rmk}
\section{Proof of Theorem \ref{existence m>d/2}}\label{regular data}
Let $(u_n)$ and $(v_n)$ be the sequences given by Lemma \ref{approximation lemma}. We divide the proof into steps.
\subsection{Energy boundedness} First, we show that $(u_n)$ and $(v_n)$ are bounded in $\HH(\Omega)$. By Lemma \ref{a priori estimates on (un)},
\begin{equation}\label{a priori pm on (un)}
\|u_n\|_{L^{p_m}}\le C
\end{equation}
where
$$p_m=\begin{cases}
\infty &\mbox{if}\ m>\frac{d}{2}\\
\text{any}\ p\in[1,\infty) &\mbox{if}\ m=\frac{d}{2}
\end{cases}$$
and $C=C(d,|\Omega|,\alpha,m,\|f\|_{L^m},p)>0$. Then, choosing $p_m=\frac{m'}{1-\gamma}$ when $m=\frac{d}{2}$,
\begin{gather*}
\int\limits_\Omega fu_n^{1-\gamma}\dex\le \|u_n\|_{L^{m'}}\|f\|_{L^m}\le C\|f\|_{L^m}.
\end{gather*}
On the other hand, the choices of $p_m>\frac{d}{2}$ if $m=\frac{d}{2}$, and of $G_k(v_n)$, $k\ge1$, as a test function in \eqref{appr eq2}, lead to
$$\alpha\int\limits_\Omega | D  G_k(v_n)|^2\dex\le\int\limits_\Omega u_n^r G_k(v_n)\dex.$$
Then, by Theorem \ref{cor stamacchia theorem} there exists $C=C(d,|\Omega|,\alpha,p_m)>0$ such that
\begin{equation}\label{a priori Linfty on (vn)}
\|v_n\|_{L^\infty}\le 1+C\|u_n^r\|_{L^{p_m}}\le C_1.
\end{equation}
Therefore, taking $u_n$ and $v_n$ as test functions in \eqref{appr eq1} and \eqref{appr eq2}, respectively, produces the following a priori energy estimates:
\begin{gather}
\alpha\|u_n\|_{\HH}^2\le \int\limits_\Omega \frac{f_n}{\big(u_n+\frac{1}{n}\big)^\gamma}u_n\dex\le\int\limits_\Omega f u_n^{1-\gamma}\dex\le C\|f\|_{L^m},\label{a priori energy 1 on (un)}\\
\alpha\|v_n\|_{\HH}^2\le\int\limits_\Omega \frac{u_n^r}{\big(v_n+\frac{1}{n}\big)^{\theta}}v_n\dex\le\int\limits_\Omega u_n^r v_n^{{1-\theta}}\dex\le C_1^{1-\theta} \|u_n\|_{L^r}^r\le C_2.\label{a priori energy 1 on (vn)}
\end{gather}
\subsection[Construction of \mathtitle{$u$} and \mathtitle{$v$}]{Construction of \mathtitle{${\boldsymbol{u}}$} and \mathtitle{$\boldsymbol{v}$}} By \eqref{a priori energy 1 on (un)} and \eqref{a priori energy 1 on (vn)}, there exist $u,v\in\HH(\Omega)$ such that, up to subsequences, $(u_n)$ and $(v_n)$ converge, respectively, to $u$ and $v$ weakly in $\HH(\Omega)$, strongly in $L^q(\Omega)$ for all $1\le q<2^\star$, and \text{a.e.} in $\Omega$. Moreover, by \eqref{a priori pm on (un)} and \eqref{a priori Linfty on (vn)}, $u\in L^p(\Omega)$ for all $1\le p<\infty$, and $v\in L^\infty(\Omega)$. Since $u_n,v_n>0$ \Ae in $\Omega$, it follows that $u,v\ge0$ \Ae in $\Omega$.
\subsection[\mathtitle{$u$} and \mathtitle{$v$} are positive]{\mathtitle{$\boldsymbol{u}$} and \mathtitle{$\boldsymbol{v}$} are positive} We show that $u,v>0$ \Ae in $\Omega$. By \eqref{a priori pm on (un)} and \eqref{a priori Linfty on (vn)}, the sequence $(v_n^{1-\theta}u_n^{r-2})$ is bounded in $L^\infty(\Omega)$, so $v^{1-\theta}u^{r-2}\in L^\infty(\Omega)$.
Let us define the function $g\colon\Omega\to[0,+\infty]$ by setting
$$g\Def\begin{cases}
\displaystyle 0 &\mbox{in}\ \{f=0\}\\
\displaystyle\frac{f}{u^\gamma} &\mbox{in}\ \{f>0\}\cap\{u\neq0\}\\
+\infty &\mbox{in}\ \{f>0\}\cap\{u=0\}.
\end{cases}$$
Clearly, $f_n (u_n+1/n)^{-\gamma}\to g$ \Ae in $\Omega$. Then, for any $0\le \varphi\in\HH(\Omega)$,
\begin{align*}
\int\limits_\Omega A D u\cdot D \varphi+v^{1-\theta} u^{r-1}\varphi\dex&=\lim_{n\to\infty}\int\limits_\Omega A D  u_{n}\cdot D \varphi\dex+\lim_{n\to\infty}\int\limits_\Omega v_{n}^{1-\theta} u_{n}^{r-1}\varphi\dex\\
&=\liminf_{n\to\infty}\int\limits_\Omega \frac{f_n}{\big(u_{n}+\frac{1}{n}\big)^\gamma}\varphi\dex\ge\int\limits_\Omega g\varphi\dex,
\end{align*}
where the first equality follows from Lebesgue's dominated convergence theorem and the weak convergence of $(u_n)$ to $u$, whereas the inequality in the second line relies upon Fatou's lemma. It follows that $g\varphi\in L^1(\Omega)$ for all $0\le\varphi\in \HH(\Omega)$, whence $g$ is finite \Ae in $\Omega$, that is, $u>0$ \Ae in $\{f>0\}$. Then, $u$ does not vanish identically and satisfies
$$\int\limits_\Omega A D u\cdot D \varphi+v^{1-\theta} u^{r-2}u\varphi\dex\ge0\quad\forall\varphi\in\HH(\Omega),\,\varphi\ge0,$$
Since $v$ is bounded and $u$ belongs to every $L^p(\Omega)$, $1\le p<\infty$, it follows that $v^{1-\theta}u^{r-2}\in L^s(\Omega)$ for all $s>\frac{d}{2}$. In particular, by the weak Harnack inequality (see Theorem $7.1.2$ in \cite{pucci-serrin}), $u>0$ \Ae in $\Omega$, and $g=fu^{-\gamma}$. Therefore,
\begin{equation}\label{u super-solution m>d/2}
\int\limits_\Omega A D u\cdot D \varphi+ v^{1-\theta} u^{r-1}\varphi\dex\ge \int\limits_\Omega \frac{f}{u^\gamma}\varphi\dex\quad\forall\varphi\in\HH(\Omega),\,\varphi\ge0.
\end{equation}
Concerning the positivity of $v$, one defines $h\colon\Omega\to[0,+\infty]$ as
$$h\Def\begin{cases}
\displaystyle\frac{u^r}{v^{\theta}} &\mbox{in}\ \{v\neq0\}\\
+\infty &\mbox{otherwise}
\end{cases}$$
and, using \eqref{appr eq2}, deduces
$$\int\limits_\Omega ADv\cdot D\psi\dex\ge\int\limits_\Omega h\psi\dex\ge0\quad\forall\psi\in\HH(\Omega),\,\psi\ge0.$$
Then, invoking again the weak Harnack inequality, $v>0$ \Ae in $\Omega$ and
\begin{equation}\label{v super-solution m>d/2}
\int\limits_\Omega ADv\cdot D\psi\dex\ge\int\limits_\Omega \frac{u^r}{v^{\theta}}\psi\dex\quad\forall\psi\in\HH(\Omega),\,\psi\ge0.
\end{equation}
Note that, by \eqref{u super-solution m>d/2} and \eqref{v super-solution m>d/2}, $u$ and $v$ satisfy \eqref{integrability of reactions weak solutions}.
\subsection[\mathtitle{$(u,v)$} is a weak solution]{\mathtitle{$\boldsymbol{(u,v)}$} is a weak solution} By \eqref{u super-solution m>d/2} and \eqref{v super-solution m>d/2}, $u$ and $v$ are super-solutions. We have to show that the converse inequalities hold.

First, we deal with the first equation. Given $\varepsilon>0$ and $0\le\varphi\in \HH(\Omega)\cap L^\infty(\Omega)$, since $u>0$ \Ae in $\Omega$, the function $\frac{u_n}{u+\varepsilon}\varphi$ is a well defined test function for \eqref{appr eq1}. Then, we have
\begin{multline*}
\int\limits_\Omega A D  u_{n}\cdot D  u_{n}\frac{\varphi}{u+\varepsilon}- A Du_{n}\cdot D u\frac{u_{n}}{(u+\varepsilon)^2}\varphi\dex\\
+\int\limits_\Omega A D u_{n}\cdot D \varphi\frac{u_{n}}{u+\varepsilon}+v_{n}^{1-\theta} \frac{u_n^r}{u+\varepsilon}\varphi\dex\le\int\limits_\Omega f_n \frac{u_{n}^{1-\gamma}}{u+\varepsilon}\varphi\dex.
\end{multline*}
Thanks to the weak and the \text{a.e.} convergence of $(u_{n})$ to $u$, the uniform $L^\infty$--bound on $(u_{n})$, the dominated convergence theorem, and the weak sequential lower semi--continuity of the quadratic form
$$\HH(\Omega)\ni w\longmapsto\int\limits_\Omega A D  w\cdot D  w\dex,$$
letting $n\to\infty$ yields
\begin{multline}\label{final step subsol thm data bounded 1st eq}
\int\limits_\Omega A D u\cdot D u\frac{\varphi}{u+\varepsilon}- A D  u\cdot D u\frac{u\varphi}{(u+\varepsilon)^2}\dex\\
+\int\limits_\Omega A D u\cdot D \varphi\frac{u}{u+\varepsilon}+ v^{1-\theta} \frac{u^r}{u+\varepsilon}\varphi\dex\le\int\limits_\Omega f \frac{u^{1-\gamma}}{u+\varepsilon}\varphi\dex.
\end{multline}
Note that $f\frac{u^{1-\gamma}}{u+\varepsilon}\varphi\le f u^{-\gamma}\varphi\in L^1(\Omega)$ for all $\varepsilon>0$. Then, disregarding the first integral in \eqref{final step subsol thm data bounded 1st eq} (which is positive), and applying Fatou's lemma on the l\text{.}h\text{.}s\text{.} and the dominated convergence theorem on the r\text{.}h\text{.}s\text{.} (w\text{.}r\text{.}t\text{.} $\varepsilon\to0$), we obtain
\begin{equation}\label{u sub-solution m>d/2}
\int\limits_\Omega A D  u\cdot D \varphi+v^{1-\theta} u^{r-1}\varphi\dex\le\int\limits_\Omega\frac{f}{u^\gamma}\varphi\dex\quad\forall\varphi\in\HH(\Omega)\cap L^\infty(\Omega),\, \varphi\ge0.
\end{equation}
Combining \eqref{u super-solution m>d/2} and \eqref{u sub-solution m>d/2} it follows
\begin{equation}\label{non lo so}
\int\limits_\Omega A D u\cdot D \varphi+ v^{1-\theta} u^{r-1}\varphi\dex=\int\limits_\Omega\frac{f}{u^\gamma}\varphi\dex\quad\forall\varphi\in \HH(\Omega)\cap L^\infty(\Omega).
\end{equation}
Now, we have to show that \eqref{non lo so} holds for unbounded test functions. With this aim, let $0\le\varphi\in \HH(\Omega)$ and $k\in\N$. By \eqref{non lo so}, it follows that
$$\int\limits_\Omega A D u\cdot D T_k(\varphi)+v^{1-\theta} u^{r-1}T_k(\varphi)\dex=\int\limits_\Omega\frac{f}{u^\gamma}T_k(\varphi)\dex.$$
Then, by the dominated and the monotone convergence theorems, letting $k\to\infty$ entails
$$\int\limits_\Omega A D u\cdot D \varphi+v^{1-\theta} u^{r-1}\varphi\dex=\int\limits_\Omega\frac{f}{u^\gamma}\varphi\dex\quad\forall\varphi\in \HH(\Omega),\,\varphi\ge0,$$
which, by linearity with respect to $\varphi$, entails the desired conclusion.

To pass to the limit in the second equation, given $\varepsilon>0$ and $0\le\psi\in\HH(\Omega)\cap L^\infty(\Omega)$, one takes $\frac{v_n}{v+\varepsilon}\psi$ as a test function in \eqref{appr eq2}, and argues as above to get
\begin{equation}\label{final step eq 2 regular data}
\int\limits_\Omega A D v\cdot D \psi\dex=\int\limits_\Omega\frac{u^r}{v^{\theta}}\psi\dex\quad\forall\psi\in \HH(\Omega)\cap L^\infty(\Omega).
\end{equation}
Then, we conclude taking $\psi=T_k(\varphi)$ in \eqref{final step eq 2 regular data}, with $k\in\N$ and $0\le\varphi\in\HH(\Omega)$, letting $k\to\infty$, and, finally, exploiting the linearity w\text{.}r\text{.}t\text{.} the test function.
\section{Data in the dual space}\label{data dual space}
Let $(u_n)$ and $(v_n)$ be the sequences given by Lemma \ref{approximation lemma}. We first prove two auxiliary results.
\begin{lemma}
$(i)$ For any $\psi\in \HH(\Omega)$, $\psi\ge0$,
\begin{equation}\label{inequality for second equation}
\int\limits_{\{v_n\le1\}} \frac{u_n^r}{\big(v_n+\frac{1}{n}\big)^{\theta}}\psi\dex \le\int\limits_\Omega A D  T_1(v_n)\cdot D \psi\dex.
\end{equation}

$(ii)$ For every $h>0$,
\begin{equation}\label{bound for integral on superlevels of un}
\int\limits_{\{u_n\ge h\}}\frac{u_n^r}{\big(v_n+\frac{1}{n}\big)^{\theta}}\dex\le\frac{\beta}{2h}\Big(\|u_n\|_{\HH}^2+\|v_n\|_{\HH}^2\Big).
\end{equation}
\end{lemma}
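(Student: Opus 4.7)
The plan for part $(i)$ is to test \eqref{appr eq2} against a truncation that localizes to the set $\{v_n \le 1\}$. Concretely, for $\delta>0$ let $\eta_\delta\colon [0,+\infty)\to[0,1]$ be the Lipschitz function equal to $1$ on $[0,1]$, affine decreasing from $1$ to $0$ on $[1,1+\delta]$, and $0$ on $[1+\delta,+\infty)$. Since $v_n\in\HH(\Omega)\cap L^\infty(\Omega)$ by Lemma \ref{approximation lemma}, the product $\eta_\delta(v_n)\psi$ belongs to $\HH(\Omega)$ and is admissible in \eqref{appr eq2}. The chain rule gives
\[
\int_\Omega A Dv_n\cdot Dv_n\,\eta'_\delta(v_n)\psi\dex+\int_\Omega A Dv_n\cdot D\psi\,\eta_\delta(v_n)\dex=\int_\Omega \frac{u_n^r}{\bigl(v_n+\tfrac{1}{n}\bigr)^{\theta}}\eta_\delta(v_n)\psi\dex.
\]
The first integral is non-positive (since $\eta'_\delta\le 0$, $\psi\ge 0$, and $A$ is positive semidefinite), so dropping it yields an inequality. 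Passing to the limit $\delta\to 0^+$ via dominated convergence (justified by the $L^\infty$-bound on $v_n$ and the $L^2$-integrability of $Dv_n\cdot D\psi$) produces
\[
\int_{\{v_n\le 1\}} A Dv_n\cdot D\psi\dex\ge\int_{\{v_n\le 1\}}\frac{u_n^r}{\bigl(v_n+\tfrac{1}{n}\bigr)^{\theta}}\psi\dex,
\]
which is exactly \eqref{inequality for second equation}, since $DT_1(v_n)=Dv_n\chi_{\{v_n\le 1\}}$ a.e.\ in $\Omega$.

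For part $(ii)$, the plan is to use $u_n$ itself---which belongs to $\HH(\Omega)$ by Lemma \ref{approximation lemma}---as the test function in \eqref{appr eq2}, obtaining
\[
\int_\Omega A Dv_n\cdot Du_n\dex=\int_\Omega \frac{u_n^{r+1}}{\bigl(v_n+\tfrac{1}{n}\bigr)^{\theta}}\dex.
\]
On the left-hand side I would apply the pointwise bound $|ADv_n\cdot Du_n|\le\beta|Dv_n||Du_n|$ from \eqref{ellipticity}, followed by Young's inequality, to get the upper bound $\tfrac{\beta}{2}\bigl(\|u_n\|_{\HH}^2+\|v_n\|_{\HH}^2\bigr)$. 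On the right-hand side, restricting the integration to $\{u_n\ge h\}$ and using $u_n\ge h$ there to factor out one power of $u_n$ yields
\[
\int_\Omega \frac{u_n^{r+1}}{\bigl(v_n+\tfrac{1}{n}\bigr)^{\theta}}\dex\ge h\int_{\{u_n\ge h\}}\frac{u_n^{r}}{\bigl(v_n+\tfrac{1}{n}\bigr)^{\theta}}\dex.
\]
Dividing by $h$ gives \eqref{bound for integral on superlevels of un}.

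I do not expect any serious obstacle. The only non-routine step is the choice of the smooth truncation $\eta_\delta(v_n)\psi$ in part $(i)$; once that is selected, the chain-rule identity and the $\delta\to 0$ limit are standard. Part $(ii)$ is a one-line computation that exploits the fact that $u_n$ simultaneously bounds the test function on the left-hand side and the Lebesgue weight on the right-hand side.
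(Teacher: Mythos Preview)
Your proof is correct and follows essentially the same approach as the paper. For part $(i)$ the paper uses the test function $F_\varepsilon(v_n)\psi$ with $F_\varepsilon(v_n)=1-\tfrac{1}{\varepsilon}T_\varepsilon(G_1(v_n))$, which is precisely your $\eta_\delta(v_n)$; for part $(ii)$ the paper likewise tests \eqref{appr eq2} with $u_n$, bounds $ADv_n\cdot Du_n$ via \eqref{ellipticity} and Young, and extracts the factor $h$ on $\{u_n\ge h\}$.
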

\begin{proof}
$(i)$ Let $\varepsilon>0$ and $\psi\in\HH(\Omega)$, $\psi\ge0$, and set $F_\varepsilon (v_n)= 1-\frac{1}{\varepsilon}T_\varepsilon(G_1(v_n)).$ Taking $F_\varepsilon (v_n)\psi$ as a test function in the weak formulation of the second equation yields
$$-\frac{1}{\varepsilon}\int\limits_\Omega A D  v_n\cdot D  v_n\chi_{\{1\le v_n\le 1+\varepsilon\}}\psi \dex+\int\limits_\Omega A D  v_n\cdot D  \psi F_\varepsilon (v_n)\dex = \int\limits_\Omega \frac{u_n^r}{\big(v_n+\frac{1}{n}\big)^{\theta}}F_\varepsilon (v_n)\psi\dex.$$
Dropping the first integral (which is negative), we obtain
$$\int\limits_\Omega \frac{u_n^r}{\big(v_n+\frac{1}{n}\big)^{\theta}}F_\varepsilon (v_n)\psi\dex\le\int\limits_\Omega A D  v_n\cdot D  \psi F_\varepsilon (v_n)\dex$$
and letting $\varepsilon\to0$ \eqref{inequality for second equation} follows. 

$(ii)$ Note that $|A D v_n\cdot D  u_n|\le\beta| D  v_n| \,| D  u_n|$.
Then
\begin{align*}
\int\limits_{\{u_n\ge h\}}\frac{u_n^r}{\big(v_n+\frac{1}{n}\big)^{\theta}}\dex&\le\frac{1}{h}\int\limits_{\{u_n\ge h\}}\frac{u_n^{r+1}}{\big(v_n+\frac{1}{n}\big)^{\theta}}\dex\le\frac{1}{h}\int\limits_{\Omega}A D  v_n\cdot D  u_n\dex\\
&\le\frac{1}{h}\int\limits_{\Omega}\beta| D  v_n| \,| D  u_n|\dex\le\frac{\beta}{2h}\Big(\|u_n\|_{\HH}^2+\|v_n\|_{\HH}^2\Big),
\end{align*}
where the last inequality follows from Young's inequality.
\end{proof}
\begin{lemma}\label{a priori estimates on (vn)}
Let $r=2$ and $\big(\frac{2^\star}{1-\gamma}\big)'\le m<\frac{d}{2}$. Then:\\
$(i)$ If $m>\frac{d}{3+\gamma}$, then there exists a constant $C=C(d,|\Omega|,\alpha,m,\gamma)>0$ such that
$$\|v_n\|_{L^\infty}\le 1+C\|f\|_{L^m}^{\frac{2}{1+\gamma}}.$$
$(ii)$ If $\big(\frac{2^\star}{1-\gamma}\big)'\le m<\frac{d}{3+\gamma}$, then there exists a constant $C=C(d,|\Omega|,\alpha,m,\gamma,\theta)>0$ such that
$$\|v_n\|_{L^{s^{\star\star}(1+\theta)}}\le C\|f\|_{L^m}^{\frac{2}{(1+\theta)(1+\gamma)}},$$
where $s=\frac{m^{\star\star}(1+\gamma)}{2}$.\\
$(iii)$ If $m=\frac{d}{3+\gamma}$, then for every $1\le p<\infty$ there exists a constant $C=C(d,|\Omega|,\alpha,\gamma, \theta,p)>0$ such that
$$\|v_n\|_{L^p}\le C\|f\|_{L^{m}}^{\frac{2}{(1+\theta)(1+\gamma)}}.$$
\end{lemma}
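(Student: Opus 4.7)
The key observation is that, with $r=2$, the right-hand side of \eqref{appr eq2} reads $u_n^2/(v_n+1/n)^\theta$, and by Lemma \ref{a priori estimates on (un)}$(ii)$ the sequence $u_n^2$ is bounded in $L^s(\Omega)$ with
$$s\Def\frac{m^{\star\star}(1+\gamma)}{2},\qquad \|u_n\|_{L^{2s}}^{2}=\|u_n\|_{L^{m^{\star\star}(1+\gamma)}}^{2}\le C\|f\|_{L^m}^{2/(1+\gamma)}.$$
A direct computation using $m^{\star\star}=dm/(d-2m)$ shows that $s>d/2$, $s=d/2$, and $s<d/2$ are exactly cases $(i)$, $(iii)$, $(ii)$ of the statement. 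Case $(i)$ is then an immediate application of Theorem \ref{cor stamacchia theorem}: test \eqref{appr eq2} with $G_k(v_n)$, $k\ge1$; on $\{v_n>k\}\subset\{v_n\ge 1\}$ one has $(v_n+1/n)^{-\theta}\le 1$, whence ellipticity yields $\int_\Omega\alpha|DG_k(v_n)|^2\dex\le\int_\Omega u_n^2\, G_k(v_n)\dex$ for every $k\ge 1$, and Theorem \ref{cor stamacchia theorem} applied with $F=u_n^2\in L^s$, $s>d/2$, and $k_0=1$ gives $\|v_n\|_{L^\infty}\le 1+C\|u_n\|_{L^{2s}}^{2}\le 1+C\|f\|_{L^m}^{2/(1+\gamma)}$.

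Case $(ii)$ is the heart of the proof; since $s<d/2$ Stampacchia is unavailable, and I would instead test \eqref{appr eq2} with the admissible function $(v_n+\varepsilon)^\delta-\varepsilon^\delta\in\HH(\Omega)$, with parameters $\varepsilon>0$ small and $\delta>0$ to be chosen. Ellipticity, the identity $(v_n+\varepsilon)^{\delta-1}|Dv_n|^2=\big(\tfrac{2}{\delta+1}\big)^2|D((v_n+\varepsilon)^{(\delta+1)/2})|^2$, and Sobolev's inequality bound the left-hand side from below by a constant times $\|(v_n+\varepsilon)^{(\delta+1)/2}-\varepsilon^{(\delta+1)/2}\|_{L^{2^\star}}^2$; on the right, the pointwise bound $(v_n+1/n)^{-\theta}\le v_n^{-\theta}$ (valid \Ae since $v_n>0$ \Ae by Lemma \ref{approximation lemma}) combined with H\"older with conjugate pair $(s,s')$ produces a bound of order $\|u_n\|_{L^{2s}}^{2}\|v_n\|_{L^{(\delta-\theta)s'}}^{\delta-\theta}$ after letting $\varepsilon\to 0$, the passage to the limit being justified by Fatou on the left and by dominated convergence on the right (everything is bounded in $L^\infty$ once $n$ is fixed). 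Choosing
$$\delta\Def\frac{2^\star+2\theta s'}{2s'-2^\star},$$
well defined and strictly larger than $\theta$ because $s<d/2\Leftrightarrow 2s'>2^\star$, equates the Sobolev exponent $2^\star(\delta+1)/2$ and the H\"older exponent $(\delta-\theta)s'$; a short manipulation using $s'=s/(s-1)$ and $s^{\star\star}=ds/(d-2s)$ shows that their common value equals $s^{\star\star}(1+\theta)$. Absorbing the $v_n$ factor on the right (legitimate since $v_n\in L^\infty(\Omega)$ a priori makes $\|v_n\|_{L^{s^{\star\star}(1+\theta)}}$ finite) and invoking $\|u_n\|_{L^{2s}}^{2}\le C\|f\|_{L^m}^{2/(1+\gamma)}$ close the argument.

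For $(iii)$ I would apply the continuity-in-$m$ trick of Lemma \ref{a priori estimates on (un)}$(iii)$: since $\Omega$ is bounded, $f\in L^{d/(3+\gamma)}\subset L^{d/(3+\gamma)-\eta}$ for every small $\eta>0$; case $(ii)$ with $m_\eta\Def\frac{d}{3+\gamma}-\eta$ yields uniform bounds on $v_n$ in $L^{s_\eta^{\star\star}(1+\theta)}$, where $s_\eta\nearrow d/2$ and hence $s_\eta^{\star\star}\nearrow\infty$ as $\eta\to 0^+$; given $1\le p<\infty$, take $\eta$ small enough that $s_\eta^{\star\star}(1+\theta)\ge p$, and conclude via H\"older inclusion (with constants tracking the dependence on $p$ as in \eqref{choice of C in iii}). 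The main expected obstacle is the algebraic identity in case $(ii)$ forcing Sobolev and H\"older exponents to coincide at $s^{\star\star}(1+\theta)$; beyond that, the only genuine technical subtlety is the $\varepsilon$--regularization. One computes $\delta=(2+\theta(d-2))/(d-4)$ at the threshold $m=(\tfrac{2^\star}{1-\gamma})'$, so for $d\ge 7$ and small $\theta$ (precisely $\theta<(d-6)/(d-2)$, mirroring condition \eqref{conditions strict positivity dual space}) the exponent $\delta$ drops below $1$ and $v_n^\delta$ need not belong to $\HH(\Omega)$; the regularization $(v_n+\varepsilon)^\delta-\varepsilon^\delta$ is then unavoidable, and the passage $\varepsilon\to 0$ proceeds exactly as in Theorem $5.6$ of \cite{BoccardoOrsina2010}.
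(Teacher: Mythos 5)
Your proof is correct and follows the same approach as the paper: observe that for $r=2$ the second equation has datum $u_n^2$, which by Lemma \ref{a priori estimates on (un)}$(ii)$ is bounded in $L^s(\Omega)$ with $s=m^{\star\star}(1+\gamma)/2$, and then rerun the machinery of Lemma \ref{a priori estimates on (un)} on the second equation with $m$ replaced by $s$ and $\gamma$ replaced by $\theta$; the three cases $s>d/2$, $s<d/2$, $s=d/2$ correspond exactly to $(i)$, $(ii)$, $(iii)$. The paper simply invokes ``the argument in Lemma \ref{a priori estimates on (un)}'' without unrolling the test-function computation or the threshold $\delta<1$; you spell those out, and your explicit $\delta=(2^\star+2\theta s')/(2s'-2^\star)$, the identification $2^\star(\delta+1)/2=(\delta-\theta)s'=s^{\star\star}(1+\theta)$, and the observation that $\delta<1$ precisely when $\theta<(d-6)/(d-2)$ at the endpoint $m=(\frac{2^\star}{1-\gamma})'$ all check out. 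One minor slip: in case $(ii)$, after introducing the regularized test function $(v_n+\varepsilon)^\delta-\varepsilon^\delta$, the cleaner pointwise bound is $(v_n+\frac1n)^{-\theta}\le(v_n+\varepsilon)^{-\theta}$ for $\varepsilon\le\frac1n$ (so that it recombines with $(v_n+\varepsilon)^\delta$ to give $(v_n+\varepsilon)^{\delta-\theta}$ before H\"older), rather than $(v_n+\frac1n)^{-\theta}\le v_n^{-\theta}$, which does not recombine directly; the end result after $\varepsilon\to0$ is the same, but the former mirrors the paper's Theorem~$5.6$ of \cite{BoccardoOrsina2010} argument exactly and avoids any momentary concern about integrability of $v_n^{-\theta}$.
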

\begin{proof}
By Lemma \ref{a priori estimates on (un)}, there exists a constant $C=C(d,|\Omega|,\alpha,m,\gamma)>0$ such that
\begin{equation}\label{estimate (un) lemma for (vn)}
\|u_n\|_{L^{m^{\star\star}(1+\gamma)}}\le C\|f\|_{L^m}^{\frac{1}{1+\gamma}}.
\end{equation}
Since $r=2$, \eqref{appr eq2} becomes
$$\int\limits_\Omega ADv_n\cdot D\varphi\dex=\int\limits_\Omega \frac{u_n^2}{\big(v_n+\frac{1}{n}\big)^\theta}\varphi\dex\quad\forall\varphi\in\HH(\Omega).$$
Note that the argument that proves Lemma \ref{a priori estimates on (un)} applies to any sequence $(w_n)\subset\HH(\Omega)\cap L^\infty(\Omega)$ such that $w_n$ is a non-negative weak solution to
$$-\Div(A(x)Dw_n)\le\frac{g_n}{(w_n+\frac{1}{n}\big)^\theta},$$
where $0\le g_n\in L^m(\Omega)$ is a sequence of functions whose $L^m(\Omega)$--norms are uniformly bounded. Note also that
$$\frac{m^{\star\star}(1+\gamma)}{2}\ge\frac{\Big(\big(\frac{2^\star}{1-\gamma}\big)'\Big)^{\star\star}(1+\gamma)}{2}\ge 1.$$
Since
$$\frac{m^{\star\star}(1+\gamma)}{2}\ge\frac{d}{2}\quad\text{if and only if}\quad m\ge\frac{d}{3+\gamma},$$
the conclusion follows applying the argument in Lemma \ref{a priori estimates on (un)} to $(v_n)$ and the second equation, with $m$ replaced by $\frac{m^{\star\star}(1+\gamma)}{2}$, and exploiting \eqref{estimate (un) lemma for (vn)} to estimate $\|u_n^2\|_{L^{m^{\star\star}(1+\gamma)}}^{\frac{1}{1+\gamma}}$.
\end{proof}
\begin{proof}[\textbf{Proof of Theorem \ref{existence data in dual space}}]
\emph{Step $1$: energy boundedness.} As for Theorem \ref{existence m>d/2}, we show that $(u_n)$ and $(v_n)$ have bounded energy. Taking $u_n$ and $v_n$ as test functions in \eqref{appr eq1} and \eqref{appr eq2}, respectively, we get
\begin{gather*}
\int\limits_\Omega A D  u_n\cdot D  u_n+v_n^{1-\theta} u_n^{r}\dex=\int\limits_\Omega \frac{f_n}{\big(u_n+\frac{1}{n}\big)^\gamma} u_n\dex,\ \ \int\limits_\Omega A D  v_n\cdot D  v_n\dex=\int\limits_\Omega  \frac{u_n^r}{\big(v_n+\frac{1}{n}\big)^{\theta}}v_n\dex.
\end{gather*}
The above formulae and H\"older's inequality yield
\begin{multline}\label{holder dual data}
\alpha\int\limits_\Omega |D  u_n|^2+| D  v_n|^2\dex\le \int\limits_\Omega A D  u_n\cdot D  u_n\dex+\int\limits_\Omega A D  v_n\cdot D  v_n\dex \le\\
\le \int\limits_\Omega f u_n^{1-\gamma}\dex\le C\|f\|_{L^m}\bigg(\int\limits_\Omega u_n^{m'(1-\gamma)}\dex\bigg)^{1/m'}.
\end{multline}
Since $m'(1-\gamma)<2^\star$ and, by Lemma \ref{a priori estimates on (un)}, $\|u_n\|_{L^{2^\star}}\le C\|f\|_{L^m}^{\frac{1}{1+\gamma}}$, applying again H\"older's inequality on the right of \eqref{holder dual data} we obtain
$$\|u_n\|_{\HH}^2+\|v_n\|_{\HH}^2\le C\|f\|_{L^m} \bigg(\int\limits_\Omega u_n^{2^\star}\dex\bigg)^{(1-\gamma)/2^\star}\le C\|f\|_{L^m}^{\frac{2}{1+\gamma}}.$$
Therefore, there exist $u,v\in\HH(\Omega)$ such that, up to subsequences, $(u_n)$ and $(v_n)$ converge, respectively, to $u$ and $v$ weakly in $\HH(\Omega)$, strongly in $L^p(\Omega)$ for all $1\le p<2^\star$, and \text{a.e.} in $\Omega$. Moreover, the higher integrability of $u$ follows directly from Lemma \ref{a priori estimates on (un)}.

\emph{Step $2$.} We show that
\begin{equation}\label{conv L1 first equation data in dual space}
v_n^{1-\theta}u_n^{r-1}\to v^{1-\theta} u^{r-1}\quad\text{strongly in}\ L^1(\Omega).
\end{equation}
To do this, note at first that, for any $h>0$,
\begin{align*}
\int\limits_{\{u_n\ge h\}}v_n^{1-\theta}u_n^{r}\dex\le \int\limits_\Omega f u_n^{1-\gamma}\dex\le |\Omega|^{\frac{1}{m'}-\frac{1-\gamma}{2^\star}}\|f\|_{L^m} \bigg(\int\limits_\Omega u_n^{2^\star}\dex\bigg)^{(1-\gamma)/2^\star}.
\end{align*}
By the integrability assumption on $f$, and the boundedness of $(u_n)$ in $L^{2^\star}(\Omega)$ proved above, there exists a positive constant $C$ independent from $h$ such that
$$\int\limits_{\{u_n\ge h\}}v_n^{1-\theta}u_n^{r}\dex\le C\quad\forall n\in\N,\, h>0.$$
Then, for $E\subset\Omega$ measurable we have
\begin{align*}
\int\limits_{E} v_n^{1-\theta}u_n^{r-1}\dex\le h^{r-1}\int\limits_{E\cap\{u_n\le h\}} v_n^{1-\theta}\dex+\frac{1}{h}\int\limits_{E\cap\{u_n\ge h\}}v_n^{1-\theta}u_n^{r}\dex\le h^{r-1}\int\limits_{E} v_n^{1-\theta}\dex+\frac{C}{h}.
\end{align*}
Now, given $\varepsilon>0$, there exists $h_\varepsilon>0$ such that $C/{h_\varepsilon}<\varepsilon/{2}.$
Moreover, by Young's inequality
$$\int\limits_E v_n^{1-\theta}\dex\le \frac{{1-\theta}}{2}\int\limits_E v_n^2\dex+\frac{1+\theta}{2}|E|.$$
Recalling that $v_n\to v$ in $L^2(\Omega)$, by Vitali's theorem we can find $\delta_\varepsilon>0$ such that, if $|E|<\delta_\varepsilon$, one has
$$h_\varepsilon^{r-1}\int\limits_{E} v_n^{1-\theta}\dex<\frac{\varepsilon}{2}.$$
It follows that
$$\int\limits_{E} v_n^{1-\theta}u_n^{r-1}\dex<\varepsilon$$
for all measurable set $E\subset\Omega$ such that $|E|<\delta_\varepsilon$, whence $(u_n^{r-1}v_n^{{1-\theta}})$ is equi-integrable and \eqref{conv L1 first equation data in dual space} follows from Vitali's theorem.

\emph{Step $3$: $u$ and $v$ are super-solutions.} By the previous step, for any $\varphi\in \HH(\Omega)\cap L^\infty(\Omega)$, $u_n^{r-1}v_n^{1-\theta}\varphi\to u^{r-1}v^{1-\theta}\varphi$ strongly in $L^1(\Omega)$. This implies that
$$\int\limits_\Omega A D u_n\cdot D \varphi+ v_n^{1-\theta} u_n^{r-1}\varphi\dex\to \int\limits_\Omega A D u\cdot D \varphi+ v^{1-\theta} u^{r-1}\varphi\dex$$
for all $\varphi\in \HH(\Omega)\cap L^\infty(\Omega)$.
Then, letting
$$g(x,u)\Def\begin{cases}
\displaystyle 0 &\mbox{in}\ \{f=0\},\\
\displaystyle\frac{f(x)}{u^\gamma} &\mbox{in}\ \{f>0\}\cap\{u\neq0\},\\
+\infty &\mbox{in}\ \{f>0\}\cap\{u=0\},
\end{cases}$$
choosing $0\le\varphi\in\HH(\Omega)\cap L^\infty(\Omega)$ in \eqref{appr eq1} and letting $n\to\infty$ yield
\begin{equation}\label{u super-solution data dual space}
\int\limits_\Omega A D u\cdot D \varphi+ v^{1-\theta} u^{r-1}\varphi\dex\ge \int\limits_\Omega g(\cdot,u)\varphi\dex\quad\forall\varphi\in\HH(\Omega)\cap L^\infty(\Omega),\,\varphi\ge0.
\end{equation}
As a consequence, $g(\cdot,u(\cdot))$ is finite \Ae in $\Omega$, and $u>0$ \Ae in $\{f>0\}$. Concerning the second equation, the same argument of the second part of Step $3$ in Theorem \ref{existence m>d/2} applies, and yields $v>0$ \Ae in $\Omega$ and
\begin{equation}\label{v super-solution data dual space}
\int\limits_\Omega ADv\cdot D\psi\dex\ge\int\limits_\Omega \frac{u^r}{v^\theta}\psi\dex\quad\forall\psi\in\HH(\Omega),\,\psi\ge0.
\end{equation}
As a consequence, \eqref{integrability of reactions finite-energy solutions} holds and $u$ and $v$ are super-solutions.

\emph{Step $4$.} Adapting the argument of Step $2$, we now show that, for any $\varepsilon>0$ and $\psi\in \HH(\Omega)\cap L^\infty(\Omega)$,
\begin{equation}\label{conv L^1 second eq data dual space}
    \frac{u_n^r}{\big(v_n+\frac{1}{n}\big)^{\theta}}\frac{T_1(v_n)}{T_1(v)+\varepsilon}\psi\to \frac{u^r}{v^{\theta}}\frac{T_1(v)}{T_1(v)+\varepsilon}\psi\quad\text{strongly in}\ L^1(\Omega).
\end{equation}
Let $E\subset\Omega$ be a measurable set and $h>0$. We have
\begin{gather*}
\int\limits_{E\cap\{u_n\le h\}} \frac{u_n^r}{\big(v_n+\frac{1}{n}\big)^{\theta}}\frac{T_1(v_n)}{T_1(v)+\varepsilon}\psi\dex\le C \|\psi\|_{L^\infty}\varepsilon^{-1} h^r\int\limits_{E} v_n^{1-\theta}\dex\\
\int\limits_{E\cap\{u_n\ge h\}} \frac{u_n^r}{\big(v_n+\frac{1}{n}\big)^{\theta}}\frac{T_1(v_n)}{T_1(v)+\varepsilon}\psi\dex\le C\|\psi\|_{L^\infty}\varepsilon^{-1}\frac{1}{h} \int\limits_{\{u_n\ge h\}} \frac{u_n^{r+1}}{\big(v_n+\frac{1}{n}\big)^{\theta}}\dex.
\end{gather*}
Since $(u_n)$ and $(v_n)$ are bounded in $\HH(\Omega)$, by \eqref{bound for integral on superlevels of un} we get
$$\int\limits_E \frac{u_n^r}{\big(v_n+\frac{1}{n}\big)^{\theta}}\frac{T_1(v_n)}{T_1(v)+\varepsilon}\psi\dex\le C\|\psi\|_{L^\infty}\varepsilon^{-1} h^r\int\limits_{E} v_n^{1-\theta}\dex +C\|\psi\|_{L^\infty}\varepsilon^{-1}\frac{1}{h}$$
for all $h>0$. Then, for any $\sigma>0$, choosing $h_\sigma$ big enough and exploiting Young's inequality as done in Step $1$, we prove that there exists $\delta_\sigma>0$ such that
$$\int\limits_E \frac{u_n^r}{\big(v_n+\frac{1}{n}\big)^{\theta}}\frac{T_1(v_n)}{T_1(v)+\varepsilon}\psi\dex<\sigma$$
whenever $|E|<\delta_\sigma$, and \eqref{conv L^1 second eq data dual space} follows again from Vitali's theorem.

\emph{Step $5$.} We claim that
\begin{equation}\label{ae convergence by Boccardo Murat}
 D  v_n\to D  v\quad \ \text{\Ae in}\ \Omega.
\end{equation}
Let $M$ be a positive constant such that $\| D  v_n\|_{L^2}\le M$ for all $n\in\N$. For every $\phi\in H^1_0(\Omega)$ one has
$$\int\limits_\Omega\frac{u_n^r}{\big(v_n+\frac{1}{n}\big)^{\theta}}\phi\dex=\int\limits_\Omega A D  v_n\cdot D \phi\dex\le \beta \| D  v_n\|_{L^2}\| D \phi\|_{L^2}\le \beta M \|\phi\|_{\HH},$$
whence $u_n^r(v_n+1/n)^{-\theta}\in H^{-1}(\Omega)$. Given $K\subset\Omega$ compact, as it is well known there exist $c\ge1$, $U\ssubset\Omega$ open and $\eta\in C^\infty_{\mathrm{c}}(\Omega)$ such that
$$K\subset U,\quad 0\le\eta\le1,\quad \eta=1\ \text{in}\ K,\quad \eta=0\ \text{in}\ \Omega\setminus \overline{U},\quad \|D\eta\|_{L^\infty(\Omega)}\le \frac{c}{\delta},$$
where $\delta=\mathrm{dist}(K,\partial\Omega)$. Then
$$\int\limits_K \frac{u_n^r}{\big(v_n+\frac{1}{n}\big)^{\theta}}\dex\le \int\limits_\Omega\frac{u_n^r}{\big(v_n+\frac{1}{n}\big)^{\theta}}\eta\dex\le\beta \int\limits_{U\setminus K} |Dv_n||D\eta|\dex\le\beta M\frac{c}{\delta}|U\setminus K|^{1/2}=\vcentcolon C_K,$$
hence, for any $\phi\in C_{\mathrm{c}}(\Omega)$ with $\mathrm{supp}(\phi)\subset K$,
$$\int\limits_\Omega\frac{u_n^r}{\big(v_n+\frac{1}{n}\big)^{\theta}}\phi\dex=\int\limits_{\mathrm{supp}(\phi)}\frac{u_n^r}{\big(v_n+\frac{1}{n}\big)^{\theta}}\phi\dex\le C_K\|\phi\|_{L^\infty(K)}.$$
It follows that $(u_n^r (v_n+1/n)^{-\theta})$ is bounded in the space of Radon measures on $\Omega$. Therefore, \eqref{ae convergence by Boccardo Murat} follows from Theorem \ref{Boccardo-Murat}.

\emph{Step $6$.} We claim that
\begin{equation}\label{ae convergence by Boccardo Murat 2}
 D  u_n\to D  u\quad \ \text{\Ae in}\ \Omega.
\end{equation}
By Step $1$, there exists $0\le h\in L^1(\Omega)$ such that, choosing a subsequence if necessary, $|v_n^{1-\theta}u_n^{r-1}|\le h$ \Ae in $\Omega$. Fix $K\subset\Omega$ compact. Taking $\eta$ as in Step $4$, we have
$$\int\limits_K\frac{f_n}{\big(u_n+\frac{1}{n}\big)^{\gamma}}\dex\le\int\limits_\Omega ADu_n\cdot D\eta+v_n^{1-\theta}u_n^{r-1}\eta\dex\le c_{K} |U\setminus K|^{1/2}+\int\limits_K h\dex=\vcentcolon C_k.$$
Therefore, given $\phi\in C_{\mathrm{c}}(\Omega)$ such that $\mathrm{supp}(\phi)\subset K$, one has
$$\int\limits_\Omega\frac{f_n}{\big(u_n+\frac{1}{n}\big)^{\gamma}}\phi\dex\le C_K\|\phi\|_{L^\infty(K)},$$
whence $(f_n(u_n+1/n)^{-\gamma})$ is bounded in the space of Radon measures on $\Omega$. Since $(v_n^{1-\theta}u_n^{r-1})$ is equi-integrable, \eqref{ae convergence by Boccardo Murat 2} is a consequence Theorem \ref{Boccardo-Murat}, as before.

\emph{Step $7$.} Here, we show that
\begin{equation}\label{v sub-solution data dual space}
\int\limits_\Omega ADv\cdot D\psi\dex\le\int\limits_\Omega\frac{u^r}{v^\theta}\psi\dex\quad\forall\psi\in\HH(\Omega)\cap L^\infty(\Omega),\,\psi\ge0.
\end{equation}
Given $\varepsilon>0$ and $0\le\psi\in\HH(\Omega)\cap L^\infty(\Omega)$, we can choose $\frac{T_1(v_n)}{T_1(v)+\varepsilon}\psi$ as a test function. Let us stress that we need to truncate $v_n$ because we do not have any uniform $L^\infty$--bound. We have
\begin{multline}\label{approx v subsolution data dual space}
\int\limits_\Omega A D  v_n\cdot D  T_1(v_n)\frac{\psi}{T_1(v)+\varepsilon}-A D  v_n\cdot D  T_1(v)\frac{T_1(v_n)}{(T_1(v)+\varepsilon)^2}\psi\dex+\\
+\int\limits_\Omega A D  v_n\cdot D \psi\frac{T_1(v_n)}{T_1(v)+\varepsilon}\dex=\int\limits_\Omega \frac{u_n^r}{\big(v_n+\frac{1}{n}\big)^{\theta}}\frac{T_1(v_n)}{T_1(v)+\varepsilon}\psi\dex.
\end{multline}
We would like to let $n\to\infty$ in \eqref{approx v subsolution data dual space}. First, note that
\begin{multline*}
\int\limits_\Omega A D  v_n\cdot D  T_1(v_n)\frac{\psi}{T_1(v)+\varepsilon}\dex=\int\limits_{\{v_n\le 1\}} A D  v_n\cdot D  v\frac{\psi}{T_1(v)+\varepsilon}\dex+\\+\int\limits_{\{v_n\le 1\}}A( D  v_n- D  v)\cdot( D  v_n- D  v)\frac{\psi}{T_1(v)+\varepsilon}\dex+\\+\int\limits_{\{v_n\le 1\}}A D  v\cdot( D  v_n- D  v)\frac{\psi}{T_1(v)+\varepsilon}\dex=\alpha_{n}+\beta_{n}+\gamma_{n}.
\end{multline*}
Since, as a straightforward computation proves,
\begin{equation}\label{boundedness A DT_1(v_n)}
\| A D  T_1(v_n)\|_{L^2}\le\beta \|  D  v_n\|_{L^2}\le C\quad\forall n\in\N,
\end{equation}
by \eqref{ae convergence by Boccardo Murat}, \eqref{boundedness A DT_1(v_n)}, and the \Ae convergence of $(v_n)$ to $v$, it follows that $A D  T_1(v_n)\rightharpoonup A D  T_1(v)$ in $L^2(\Omega),$ which in turn implies
$$\alpha_n\to\int\limits_\Omega A D  T_1(v)\cdot D  v\frac{\psi}{T_1(v)+\varepsilon}\dex.$$
Now, noting that $\beta_n\ge0$ and
$$\gamma_n=\alpha_n-\int\limits_{\{v_n\le 1\}}A D  v\cdot D  v\frac{\psi}{T_1(v)+\varepsilon}\dex\to0,$$
we infer
\begin{equation}\label{liminf second equation data dual space}
\liminf_{n\to\infty}\int\limits_\Omega A D  v_n\cdot D  T_1(v_n)\frac{\psi}{T_1(v)+\varepsilon}\dex\ge \int\limits_\Omega A D  v\cdot D  T_1(v)\frac{\psi}{T_1(v)+\varepsilon}\dex.
\end{equation}
Therefore, thanks to \eqref{liminf second equation data dual space} and \eqref{conv L^1 second eq data dual space}, the weak convergence of ($v_n)$ to $v$, the \text{a.e.} convergence of $(u_n)$, $(v_n)$, and $(Dv_n)$ to $u$, $v$, and $Dv$, respectively, and the dominated convergence theorem, letting $n\to\infty$ in \eqref{approx v subsolution data dual space} yields
\begin{multline}\label{final step v subsol data in dual space}
\int\limits_\Omega A D v\cdot D  T_1(v)\frac{\psi}{T_1(v)+\varepsilon}-A D  v\cdot D  T_1(v)\frac{T_1(v_n)\psi}{(T_1(v)+\varepsilon)^2}\dex\\
+\int\limits_\Omega A D  v\cdot D \psi\frac{T_1(v)}{T_1(v)+\varepsilon}\dex\le\int\limits_\Omega \frac{u^r}{v^{\theta}}\frac{T_1(v)\psi}{T_1(v)+\varepsilon}\dex.
\end{multline}
Since the first integral on the l\text{.}h\text{.}s\text{.} in \eqref{final step v subsol data in dual space} is positive, and $\frac{T_1(v)}{T_1(v)+\varepsilon}\le1$ for all $\varepsilon>0$, using Fatou's lemma w\text{.}r\text{.}t\text{.} $\varepsilon\to0$ formula \eqref{v sub-solution data dual space} follows.

\emph{Step $8$.} Now we prove that
\begin{equation}\label{u sub-solution data dual space}
\int\limits_\Omega A D  u\cdot D \varphi+ v^{1-\theta} u^{r-1}\varphi\dex\le\int\limits_\Omega\frac{f}{u^\gamma}\varphi\dex\quad\forall\varphi\in\HH(\Omega)\cap L^\infty(\Omega),\,\varphi\ge0.
\end{equation}
Let $\varepsilon>0$ and $0\le\varphi\in\HH(\Omega)\cap L^\infty(\Omega)$ and define
$$H_{n,\varepsilon}\Def\frac{T_1(G_1(u_n))}{T_1(G_1(u))+\varepsilon}\varphi,\quad H_\varepsilon\Def\frac{T_1(G_1(u))}{T_1(G_1(u))+\varepsilon}\varphi.$$
Since $u_n\to u$ a\text{.}e\text{.},
$$\lim_{n\to\infty}\int\limits_\Omega \frac{f_n}{\big(u_n+\frac{1}{n}\big)^\gamma}H_{n,\varepsilon}\dex=\int\limits_\Omega \frac{f}{u^\gamma}H_\varepsilon\dex.$$
Moreover, by $\|H_{n,\varepsilon}\|_{L^\infty}\le\varepsilon^{-1}\|\varphi\|_{L^\infty}$ and \eqref{conv L1 first equation data in dual space}, extracting a subsequence if necessary, we have
$$v_n^{1-\theta} u_n^{r-1}H_{n,\varepsilon}\to v^{1-\theta} u^{r-1}H_\varepsilon\quad\text{strongly in}\ L^1(\Omega).$$
Then, \eqref{u sub-solution data dual space} follows taking $H_{n,\varepsilon}$ as a test function in \eqref{appr eq1} and arguing as in Step $6$, with \eqref{ae convergence by Boccardo Murat} replaced by \eqref{ae convergence by Boccardo Murat 2}.

\emph{Step $9$: conclusion.} By \eqref{u super-solution data dual space}, \eqref{u sub-solution data dual space}, \eqref{v super-solution data dual space}, and \eqref{v sub-solution data dual space}, we have
\begin{equation*}
\int\limits_\Omega A D  u\cdot D \varphi+v^{1-\theta} u^{r-1}\varphi\dex=\int\limits_\Omega\frac{f}{u^\gamma}\varphi\dex\quad\forall\varphi\in\HH(\Omega)\cap L^\infty(\Omega)
\end{equation*}
and
\begin{equation*}
\int\limits_\Omega A D  v\cdot D \psi\dex=\int\limits_\Omega \frac{u^r}{v^{\theta}}\psi\dex\quad\forall\psi\in\HH(\Omega)\cap L^\infty(\Omega).
\end{equation*}
In fact, the last formula holds also for unbounded test functions. This follows letting $k\to\infty$ in the formula
$$\int\limits_\Omega A D  v\cdot D T_k(\psi)\dex=\int\limits_\Omega \frac{u^r}{v^{\theta}}T_k(\psi)\dex,$$
which holds for any $\psi\in\HH(\Omega)$ and any $k\ge0$. The proof is concluded.

\emph{Step $10$: the case $r=2$.} Thanks to the previous steps, $(u,v)$ is a finite energy solution to \eqref{second_system}. The assertion on the regularity of $v$ follows from Lemma \ref{a priori estimates on (vn)}, so it remains to prove that $u$ is strictly positive. By \eqref{u super-solution data dual space}, for all $\varphi\in\HH(\Omega)\cap L^\infty(\Omega)$, $\varphi\ge0$, it holds
$$\int\limits_\Omega ADu\cdot D\varphi\dex+\int\limits_\Omega v^{1-\theta}u\varphi\dex\ge0.$$
This remains true also for unbounded test functions, allowing the second integral to become infinite. Indeed, for $\varphi\in\HH(\Omega)$ and $k\in\N$,
$$\int\limits_\Omega ADu\cdot DT_k(\varphi)\dex+\int\limits_\Omega v^{1-\theta}uT_k(\varphi)\dex\ge0,$$
and the inequality for $\varphi$ follows letting $k\to\infty$ and exploiting the dominated and the monotone convergence theorems. Thence, $u$ is a non-negative, weak solution to
$$-\Div(A(x)D u)+v^{1-\theta}u\ge0.$$
Since $v^{1-\theta}\in L^{\frac{s_m}{1-\theta}}(\Omega)$, with $s_m$ given by \eqref{regularity v dual space}, and $u>0$ \Ae in $\{f>0\}$, by the weak Harnack inequality (Theorem $7.1.2$ in \cite{pucci-serrin}) $u$ is strictly positive in $\Omega$ if
\begin{equation}\label{condition sm}
s_m>\frac{d}{2}(1-\theta).
\end{equation}
We want to show that conditions \eqref{conditions strict positivity dual space} ensure that \eqref{condition sm} holds.
If $d=3,4$, then $\big(\frac{2^\star}{1-\gamma}\big)'\ge \frac{d}{3+\gamma}$, and \eqref{condition sm} holds thanks to the very definition of $s_m$ (see \eqref{regularity v dual space}). If $d\ge 5$, the only uncertain situation is $m<\frac{d}{3+\gamma}$, i.e., $s_m=\big(\frac{m^{\star\star}(1+\gamma)}{2}\big)^{\star\star}(1+\theta)$. Note that
\begin{equation}\label{almost last display data dual space}
\Big(\frac{m^{\star\star}(1+\gamma)}{2}\Big)^{\star\star}(1+\theta)>\frac{d}{2}(1-\theta)\quad\text{if and only if}\quad m>\frac{1-\theta}{2(2-\theta+\gamma)}d
\end{equation}
and that
\begin{equation}\label{last display data dual space}
\bigg(\frac{2^\star}{1-\gamma}\bigg)'>\frac{1-\theta}{2(2-\theta+\gamma)}d\quad\text{if and only if}\quad d<2\frac{3-\theta}{1-\theta}.
\end{equation}
Since the function $[0,1)\ni\theta\mapsto2\frac{3-\theta}{1-\theta}$ is increasing, its minimum in $[0,1)$ is $6$. Therefore, when $d=5$, by \eqref{almost last display data dual space} and \eqref{last display data dual space}, \eqref{condition sm} holds (recall that $m>\big(\frac{2^\star}{1-\gamma}\big)'$). Finally, if $d\ge 6$, the validity of \eqref{condition sm} follows by the conditions in \eqref{conditions strict positivity dual space}, which match \eqref{almost last display data dual space} and \eqref{last display data dual space}.
\end{proof}
\section{Data outside the dual space}\label{data outside dual space}
Here, we assume $m<\big(\frac{2^\star}{1-\gamma}\big)'$. Let $(u_n,v_n)\in\big(\HH(\Omega)\cap L^\infty(\Omega)\big)^2$ be the sequence given by Lemma \ref{approximation lemma}.
\subsection{A priori estimates} Repeating the reasoning that lead to \eqref{holder dual data} we obtain
\begin{equation}\label{energy bound step data outside dual space}
\alpha\int\limits_\Omega | D  u_n|^2\dex+\alpha\int\limits_\Omega| D  v_n|^2\dex\le \int\limits_\Omega f_n u_n^{1-\gamma}\dex.
\end{equation}
\begin{lemma}\label{bound in Lr}
Let $m\ge\big(\frac{r}{1-\gamma}\big)'$ and $r>2^\star$. Then $(u_n)$ is bounded in $L^{r}(\Omega)$.
\end{lemma}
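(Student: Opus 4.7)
The plan is to test the first equation in \eqref{appr eq1} with $u_n$ itself, which, after discarding the positive Dirichlet term and using $f_n \le f$ together with $(u_n+1/n)^{-\gamma} u_n \le u_n^{1-\gamma}$, yields the fundamental absorption estimate
\begin{equation*}
\int_\Omega v_n^{1-\theta} u_n^r\,dx \le \int_\Omega f\, u_n^{1-\gamma}\,dx.
\end{equation*}
The hypothesis $m \ge \big(\tfrac{r}{1-\gamma}\big)'$ is equivalent to $(1-\gamma) m' \le r$; coupled with H\"older's inequality and the continuous inclusion $L^r(\Omega) \hookrightarrow L^{(1-\gamma) m'}(\Omega)$ on the bounded domain $\Omega$, this gives
\begin{equation*}
\int_\Omega f\, u_n^{1-\gamma}\,dx \le C\,\|f\|_{L^m(\Omega)} \|u_n\|_{L^r(\Omega)}^{1-\gamma}.
\end{equation*}

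To convert the weighted control $\int v_n^{1-\theta} u_n^r \le C \|u_n\|_{L^r}^{1-\gamma}$ into an unweighted $L^r$--bound, I would split $\Omega = \{v_n \ge \lambda\} \cup \{v_n < \lambda\}$ for a parameter $\lambda>0$ to be chosen. On $\{v_n \ge \lambda\}$ the pointwise estimate $u_n^r \le \lambda^{-(1-\theta)} v_n^{1-\theta} u_n^r$ is immediate. On $\{v_n < \lambda\}$ the plan is to exploit the a priori estimate $\|u_n\|_{L^{m^{\star\star}(1+\gamma)}} \le C$ from Lemma \ref{a priori estimates on (un)}, combined with a Chebyshev-type control on $|\{v_n < \lambda\}|$ deduced from the $H^1_0$--estimate on $v_n$ obtained by testing the second equation in \eqref{appr eq2} with $v_n$ and using $\int u_n^r v_n(v_n+1/n)^{-\theta}\,dx \le \int u_n^r v_n^{1-\theta}\,dx$ together with the already established control of $\int v_n^{1-\theta} u_n^r$.

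I expect the principal obstacle to lie in this splitting step: since $r>2^\star$ forces $m^{\star\star}(1+\gamma) < r$, one cannot close the estimate on $\{v_n<\lambda\}$ by applying H\"older at the exponent $r$ with only the bound from Lemma \ref{a priori estimates on (un)}. The plan is therefore to choose $\lambda$ as an appropriate negative power of $\|u_n\|_{L^r}$ (also scaling with $\|v_n\|_{H^1_0}$ via Sobolev) in order to balance the two contributions and produce a self-improving inequality of the shape
\begin{equation*}
\|u_n\|_{L^r}^r \le C\bigl(1 + \|u_n\|_{L^r}^{\rho}\bigr)
\end{equation*}
with $\rho < r$; since $1-\gamma < r$, the sought uniform bound then follows.
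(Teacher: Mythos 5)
Your opening move is correct and matches the paper: testing \eqref{appr eq1} with $u_n$ and using $m'(1-\gamma)\le r$ does give
$\int_\Omega v_n^{1-\theta}u_n^r\,dx \le C\|f\|_{L^m}\|u_n\|_{L^r}^{1-\gamma}$,
and your split $\Omega=\{v_n\ge\lambda\}\cup\{v_n<\lambda\}$ with the observation $u_n^r\le\lambda^{\theta-1}v_n^{1-\theta}u_n^r$ on $\{v_n\ge\lambda\}$ is essentially the paper's treatment of $\{v_n\ge1\}$.

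However, the treatment of $\{v_n<\lambda\}$ contains a genuine gap, and the balancing choice of $\lambda$ does not repair it. Two separate obstructions: first, a Chebyshev-type inequality derived from the $H^1_0(\Omega)$ bound on $v_n$ controls $|\{v_n>t\}|$ from above, not $|\{v_n<\lambda\}|$; indeed $\{v_n<\lambda\}$ always contains a neighbourhood of $\partial\Omega$ (since $v_n\in H^1_0(\Omega)$ vanishes there), so its measure admits no nontrivial uniform bound as $\lambda\to0$. Second, and more seriously, even granting such a bound you would need to apply H\"older with $u_n$ at an exponent at least $r$, but as you yourself observe the only a priori estimate available is $\|u_n\|_{L^{m^{\star\star}(1+\gamma)}}\le C$ with $m^{\star\star}(1+\gamma)<2^\star<r$; the exponent $1-r/(m^{\star\star}(1+\gamma))$ is negative, so the H\"older step is vacuous. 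There is no choice of $\lambda$ (power of $\|u_n\|_{L^r}$ or otherwise) that converts a bound in a weaker norm into a bound in the stronger norm $L^r$ on a set whose measure you cannot make small — the hoped-for self-improving inequality $\|u_n\|_{L^r}^r\le C(1+\|u_n\|_{L^r}^\rho)$ with $\rho<r$ is not reachable by this route.

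The missing idea is to use the \emph{second} equation nonlinearly on the region $\{v_n\le1\}$. One first shows, by testing \eqref{appr eq2} with $\bigl(1-\tfrac{1}{\varepsilon}T_\varepsilon(G_1(v_n))\bigr)\psi$ and letting $\varepsilon\to0$, the key inequality
\begin{equation*}
\int\limits_{\{v_n\le1\}}\frac{u_n^r}{\bigl(v_n+\tfrac{1}{n}\bigr)^\theta}\,\psi\,dx \;\le\; \int\limits_\Omega A\,D T_1(v_n)\cdot D\psi\,dx
\qquad\forall\,0\le\psi\in H^1_0(\Omega).
\end{equation*}
Taking $\psi=u_n$, and combining with $u_n^r\le1+u_n^{r+1}$ and $(v_n+1/n)^\theta\le2^\theta$ on $\{v_n\le1\}$, one obtains
$\int_{\{v_n\le1\}}u_n^r\,dx\le|\Omega|+2^\theta\int_\Omega A\,DT_1(v_n)\cdot Du_n\,dx$, and the right-hand side is controlled by $\tfrac{\beta}{2}(\|u_n\|_{H^1_0}^2+\|v_n\|_{H^1_0}^2)$, which by \eqref{energy bound step data outside dual space} is again bounded by $C\int_\Omega f u_n^{1-\gamma}\,dx\le C\|f\|_{L^m}\|u_n\|_{L^r}^{1-\gamma}$. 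This closes the argument with the desired exponent $\rho=1-\gamma<r$, without invoking Lemma \ref{a priori estimates on (un)} or any measure estimate on $\{v_n<\lambda\}$.
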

\begin{proof}
Observe that
$$\int\limits_\Omega u_n^r\dex=\int\limits_{\{v_n\ge1\}}u_n^r\dex+\int\limits_{\{v_n\le1\}}u_n^r\dex.$$
Moreover,
\begin{equation}\label{bound un to r on vn ge 1}
\int\limits_{\{v_n\ge1\}}u_n^r\dex\le \int\limits_{\{v_n\ge1\}}v_n^{1-\theta} u_n^r\dex\le \int\limits_\Omega f_n u_n^{1-\gamma}\dex
\end{equation}
and
\begin{align*}
\int\limits_{\{v_n\le1\}}u_n^r\dex &\le |\Omega|+\int\limits_{\{v_n\le1\}} u_n^{r+1}\dex\le |\Omega|+2^\theta\int\limits_{\{v_n\le1\}} \frac{u_n^{r+1}}{\big(v_n+\frac{1}{n}\big)^{\theta}}\dex,
\end{align*}
as a simple computation shows. Since, by \eqref{inequality for second equation},
\begin{align*}
\int\limits_{\{v_n\le1\}} \frac{u_n^{r+1}}{\big(v_n+\frac{1}{n}\big)^{\theta}}\dex\le\int\limits_\Omega A D  T_1(v_n)\cdot D  u_n\dex
\end{align*}
and, taking advantage of \eqref{energy bound step data outside dual space},
$$\int\limits_\Omega A D  T_1(v_n)\cdot D  u_n \dex\le\frac{\beta}{2}\int\limits_\Omega | D  v_n|^2\dex+\frac{\beta}{2}\int\limits_\Omega| D  u_n|^2\dex\le C\int\limits_\Omega f_n u_n^{1-\gamma}\dex,$$
we infer
\begin{equation}\label{bound un to r on vn le 1}
    \int\limits_{\{v_n\le1\}}u_n^r\dex \le |\Omega|+C\int\limits_\Omega f_n u_n^{1-\gamma}\dex.
\end{equation}
Putting together \eqref{bound un to r on vn ge 1} and \eqref{bound un to r on vn le 1} yields
$$\int\limits_\Omega u_n^r\dex\le |\Omega|+ C\int\limits_\Omega f_n u_n^{1-\gamma}\dex,$$
hence, recalling that $u_n\in L^\infty(\Omega)$ and that, by assumption, $m'(1-\gamma)\le r$,
\begin{equation*}
\|u_n\|_{L^r}^r\le |\Omega|+C \|f\|_{L^m} \|u_n\|_{L^r}^{1-\gamma}.
\end{equation*}
It follows that
$$\|u_n\|_{L^r}^r\big(1-C\|f\|_{L^m}\|u_n\|_{L^r}^{1-\gamma-r}\big)\le|\Omega|,$$
which, given that $1-\gamma-r<0$, entails the conclusion.
\end{proof}
\begin{lemma}\label{bound in L(r+1)}
Let $m\ge\big(\frac{r+1}{1-\gamma}\big)'$, $r>2^\star-1$, and ${\theta}=0$. Then $(u_n)$ is bounded in $L^{r+1}(\Omega)$.
\end{lemma}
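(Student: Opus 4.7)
The plan is to exploit the simplification $\theta=0$, which turns the second equation of the approximating system \eqref{appr eq2} into the non-singular problem $-\Div(A(x)Dv_n)=u_n^r$. Since Lemma \ref{approximation lemma} guarantees $u_n\in\HH(\Omega)\cap L^\infty(\Omega)$, I can take $u_n$ itself as a test function in the second equation to obtain the clean identity
$$\int\limits_\Omega u_n^{r+1}\dex=\int\limits_\Omega ADv_n\cdot Du_n\dex.$$
This replaces the truncation-and-splitting argument used for Lemma \ref{bound in Lr} (where the singularity $(v_n+1/n)^{-\theta}$ forced the use of $T_1(v_n)$ and a restriction to $\{v_n\le 1\}$) with a single direct computation.

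From here I would bound the right-hand side via Cauchy--Schwarz followed by Young's inequality,
$$\int\limits_\Omega ADv_n\cdot Du_n\dex\le\beta\int\limits_\Omega|Dv_n|\,|Du_n|\dex\le\frac{\beta}{2}\big(\|u_n\|_{\HH}^2+\|v_n\|_{\HH}^2\big),$$
and then invoke the energy estimate \eqref{energy bound step data outside dual space} to control the sum by $\frac{1}{\alpha}\int_\Omega f_n u_n^{1-\gamma}\dex$. H\"older's inequality with exponents $m$ and $m'$ then yields
$$\int\limits_\Omega u_n^{r+1}\dex\le C\|f\|_{L^m}\|u_n\|_{L^{m'(1-\gamma)}}^{1-\gamma}$$
for a constant $C=C(\alpha,\beta)$ independent of $n$.

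Finally, the hypothesis $m\ge\big(\frac{r+1}{1-\gamma}\big)'$ is precisely equivalent to $m'(1-\gamma)\le r+1$, so the elementary embedding $L^{r+1}(\Omega)\hookrightarrow L^{m'(1-\gamma)}(\Omega)$ on the bounded domain $\Omega$ produces $\|u_n\|_{L^{m'(1-\gamma)}}^{1-\gamma}\le |\Omega|^{\eta}\|u_n\|_{L^{r+1}}^{1-\gamma}$ for some $\eta\ge 0$. Substituting back gives an inequality of the form
$$\|u_n\|_{L^{r+1}}^{r+1}\le \tilde{C}\,\|u_n\|_{L^{r+1}}^{1-\gamma},$$
which, since $r+\gamma>0$, rearranges to $\|u_n\|_{L^{r+1}}^{r+\gamma}\le \tilde{C}$ and concludes the proof. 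The condition $r>2^\star-1$ is precisely what keeps the standing range $\big(\frac{r+1}{1-\gamma}\big)'\le m<\big(\frac{2^\star}{1-\gamma}\big)'$ non-empty, so no serious obstacle arises: the argument is a direct, and in fact simpler, analogue of Lemma \ref{bound in Lr}, with the non-singularity of the second equation doing all the structural work.
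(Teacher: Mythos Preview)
Your proof is correct and follows essentially the same approach as the paper: test the second equation with $u_n$, bound the right-hand side via Young's inequality and the energy estimate \eqref{energy bound step data outside dual space}, then close the loop with H\"older's inequality and the observation $m'(1-\gamma)\le r+1$. The only difference is cosmetic---the paper presents the chain of inequalities more tersely and does not comment on the role of the hypothesis $r>2^\star-1$.
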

\begin{proof}
Under the assumption ${\theta}=0$, the equations \eqref{appr eq1}, \eqref{appr eq2} become
$$-\Div(A(x) D  u_n)+v_n u_n^{r-1}=\frac{f_n}{\big(u_n+\frac{1}{n}\big)^\gamma}\,,\qquad-\Div(A(x) D  v_n)=u_n^r.$$
Choosing $u_n$ as test function in the second one produces
$$\int\limits_\Omega u_n^{r+1}\dex=\int\limits_\Omega A D  v_n\cdot D  u_n\dex\le \frac{\beta}{2} \int\limits_\Omega | D  v_n|^2\dex+ \frac{\beta}{2} \int\limits_\Omega | D  u_n|^2\dex,$$
whence, exploiting \eqref{energy bound step data outside dual space},
$$\int\limits_\Omega u_n^{r+1}\dex\le C\int\limits_\Omega f_n u_n^{1-\gamma}\dex.$$
Since $m'(1-\gamma)\le r+1$, one has
$$\int\limits_\Omega u_n^{r+1}\dex\le C\|f\|_{L^m} \|u_n\|_{L^{r+1}}^{1-\gamma}.$$
This implies $\|u_n\|_{L^{r+1}}\le C\|f\|_{L^m}^{\frac{1}{r+\gamma}}$, as desired.
\end{proof}
\subsection{Proofs of the results} Our proofs follow the same line of that of Theorem \ref{existence data in dual space}. We just sketch the proof of Theorem \ref{regularizing outside dual space}.
\begin{proof}[\textbf{Proof of Theorem \ref{regularizing outside dual space}}]
Since $m'(1-\gamma)\le r$, by H\"older's inequality
$$\int\limits_\Omega f_n u_n^{1-\gamma}\dex\le |\Omega|^{\frac{1}{m'}-\frac{1-\gamma}{r}}\|f\|_{L^m}\|u_n\|_{L^r}^{1-\gamma}.$$
Then, by Lemma \ref{bound in Lr}, $(f_n u_n^{1-\gamma})$ is bounded in $L^1$ (and actually equi-integrable). Then, more solito, plugging the latter information into \eqref{energy bound step data outside dual space} implies that there exist $u,v\in\HH(\Omega)$ such that, up to subsequences, $(u_n)$ and $(v_n)$ converge, respectively, to $u$ and $v$ weakly in $\HH(\Omega)$, strongly in $L^p(\Omega)$ for all $1\le p<2^\star$, and \text{a.e.} in $\Omega$. Then, repeating the arguments in Theorem \eqref{existence data in dual space}, the proof proceeds as follows:
\begin{itemize}
    \item[\emph{Step $1$}.] By the equi-integrability of $(f_n u_n^{1-\gamma})$, $v_n^{1-\theta}u_n^{r-1}\to v^{1-\theta}u^{r-1}$ strongly in $L^1(\Omega)$.
    \item[\emph{Step $2$}.] By Fatou's lemma and Step $1$, $u>0$ \Ae in $\{f>0\}$, $v>0$ \Ae in $\Omega$, and they are super-solutions.
    \item[\emph{Step $3$}.] The same as Step $3$ of Theorem \ref{existence data in dual space}.
    \item[\emph{Step $4$}.] $Dv_n\to Dv$ and $Du_n\to Du$ \Ae in $\Omega$.
    \item[\emph{Step $5$}.] $u$ and $v$ are solutions.
\end{itemize}
Finally, Fatou's lemma entails
$$\|u\|_{L^r}\le \liminf_{n\to\infty}\|u_n\|_{L^r}\le C,$$
i.e., $u\in L^r(\Omega)$.
\end{proof}
\begin{proof}[\textbf{Proof of Theorem \ref{regularizing outside dual space 2}}]
Argue as in the previous proof.
\end{proof}
\section{A higher integrability result}\label{A higher integrability result}
Assume
$ (r+\gamma)' \le m<\frac{d}{2}$ and $r\ge\frac{d}{d-2}-\gamma$ and let $(u_n,v_n)$ be as in Lemma \ref{approximation lemma}. Since
$$(r+\gamma)' \ge \frac{r}{r-1+\gamma}=\Big(\frac{r}{1-\gamma}\Big)',$$
the proof of Theorem \ref{regularizing outside dual space} shows that $(u_n,v_n)$ converges almost everywhere to a finite-energy solution $(u,v)\in\HH(\Omega)\times\HH(\Omega)$. Then, Theorem \ref{last thm} is a straightforward consequence of the next
\begin{lemma}
There exists a positive constant $C$, depending on $|\Omega|$, $\gamma$, $r$, $\alpha$, and $\beta$, such that
$$\int\limits_\Omega u_n^{r+1+\gamma}\dex\le C \|f\|_{L^m}\Big(1+\|f\|_{L^m}^{\frac{1}{r-1+\gamma}}\Big)\quad \forall n\in\N.$$
\end{lemma}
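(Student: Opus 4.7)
The plan is to establish an a priori $L^{r+1+\gamma}$-bound on $u_n$ by combining the two equations of the approximating system through a tailored pair of test functions, and then absorbing the high-order contribution via Young's inequality using the $L^r$-bound already provided by Lemma \ref{bound in Lr}. The first step is to derive a key identity: test the first equation of \eqref{approx system intro} against $v_n (u_n+\epsilon)^\gamma$ and the second against $(u_n+\epsilon)^{1+\gamma}$, the parameter $\epsilon>0$ ensuring every integrand is in $L^1$. Multiplying the first tested equation by $(1+\gamma)$ and subtracting the second, the mixed gradient term $(1+\gamma)\int A Du_n\cdot Dv_n\,(u_n+\epsilon)^\gamma$ cancels thanks to the symmetry of $A$. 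Passing to the limit $\epsilon\to 0^+$ (via monotone/dominated convergence, which is legitimate since $u_n,v_n\in L^\infty(\Omega)$) yields
\[
\int_\Omega \frac{u_n^{r+1+\gamma}}{(v_n+\tfrac{1}{n})^\theta}\,dx + (1+\gamma)\gamma\!\int_\Omega\! v_n u_n^{\gamma-1} A Du_n\cdot Du_n\,dx + (1+\gamma)\!\int_\Omega\! v_n^{2-\theta} u_n^{r-1+\gamma}\,dx = (1+\gamma)\!\int_\Omega\! \frac{f_n v_n u_n^\gamma}{(u_n+\tfrac{1}{n})^\gamma}\,dx.
\]
All three left-hand terms are non-negative and $u_n^\gamma/(u_n+\tfrac{1}{n})^\gamma\le 1$, giving the central inequality $\int u_n^{r+1+\gamma}/(v_n+\tfrac{1}{n})^\theta\,dx \le (1+\gamma)\int f v_n\,dx$.

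Next, I would split $\int u_n^{r+1+\gamma}\,dx = \int_{\{v_n\le 1\}} + \int_{\{v_n\ge 1\}}$. On $\{v_n\le 1\}$ we have $(v_n+\tfrac{1}{n})^{-\theta}\ge 2^{-\theta}$ (for $n\ge 1$), so the central inequality gives $\int_{\{v_n\le 1\}} u_n^{r+1+\gamma}\le 2^\theta(1+\gamma)\int f v_n$. On $\{v_n\ge 1\}$, we have $v_n^{1-\theta}\ge 1$, and testing the first equation against the bounded function $u_n^{2+\gamma}$ yields $\int v_n^{1-\theta}u_n^{r+1+\gamma}\le \int f u_n^2$, whence $\int_{\{v_n\ge 1\}} u_n^{r+1+\gamma}\le \int f u_n^2$. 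Summing, $\int u_n^{r+1+\gamma}\le C\int f v_n + \int f u_n^2$. Hölder gives $\int fv_n\le \|f\|_{L^m}\|v_n\|_{L^{m'}}$ and $\int fu_n^2\le \|f\|_{L^m}\|u_n\|_{L^{2m'}}^2$, and the remaining task is to control these two norms. I would bound $\|v_n\|_{L^{m'}}$ using the energy estimate $\alpha\|Dv_n\|_{L^2}^2\le \int f u_n^{1-\gamma}$ (from testing the second equation with $v_n$ and the first with $u_n$), Sobolev's embedding, and the a priori bound $\|u_n\|_{L^r}\le C(1+\|f\|_{L^m}^{1/(r-1+\gamma)})$ from Lemma \ref{bound in Lr}; for $\|u_n\|_{L^{2m'}}$ I would exploit the Sobolev embedding of $u_n^{1+\gamma}\in H^1_0(\Omega)$, whose gradient is controlled by $\int u_n^{2\gamma}|Du_n|^2 \le C\int f u_n^{1+\gamma}$ (from testing the first equation with $u_n^{1+2\gamma}$), then interpolate with the $L^r$-bound and absorb any factor of $\|u_n\|_{L^{r+1+\gamma}}$ appearing in the resulting estimates back into the left-hand side via Young's inequality.

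The main obstacle is exactly this absorption: since $m'\le r+\gamma$ can well exceed both $2^\star$ and $r+1+\gamma$ when $m$ is close to the threshold $(r+\gamma)'$, neither $\|v_n\|_{L^{m'}}$ nor $\|u_n\|_{L^{2m'}}$ admits a direct uniform control from the data. The supplementary hypothesis $r\ge \tfrac{d}{d-2}-\gamma$, equivalent to $2(r+\gamma)\ge 2^\star$, is precisely the room required so that Sobolev applied to the natural power $u_n^{1+\gamma}$ reaches an exponent high enough for the resulting chain of interpolations to close with the correct scaling. Checking that the final exponent of $\|f\|_{L^m}$ after Young's inequality matches $(r+\gamma)/(r-1+\gamma)=1+\tfrac{1}{r-1+\gamma}$ is the technical core of the argument; once verified, rearrangement yields the announced bound $\int u_n^{r+1+\gamma}\le C\|f\|_{L^m}(1+\|f\|_{L^m}^{1/(r-1+\gamma)})$.
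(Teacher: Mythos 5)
Your proposal is in the right spirit — split $\int u_n^{r+1+\gamma}$ at $\{v_n=1\}$, couple the two equations through matched test functions so that the mixed gradient term cancels, and absorb the right-hand side via Young — but it contains two genuine gaps that keep it from closing.

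First, on $\{v_n\le1\}$: you test the first equation against $v_n(u_n+\varepsilon)^\gamma$ and the second against $(u_n+\varepsilon)^{1+\gamma}$, which indeed yields, after cancellation and $\varepsilon\to0^+$, the inequality $\int_\Omega u_n^{r+1+\gamma}(v_n+\tfrac1n)^{-\theta}\,dx\le(1+\gamma)\int_\Omega f v_n\,dx$. But the factor $v_n$ on the right is fatal: you now need to control $\|v_n\|_{L^{m'}}$, and as you yourself observe, when $m$ is near the threshold $(r+\gamma)'$ the exponent $m'$ can exceed both $2^\star$ and $r+1+\gamma$, so neither the energy estimate nor Lemma \ref{bound in Lr} supplies that bound. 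The paper's argument avoids this entirely by replacing $v_n$ with $T_1(v_n)$: testing the first equation against $T_1(v_n)(u_n+\varepsilon)^\gamma$ and using the truncated inequality \eqref{inequality for second equation} (obtained by testing the second equation against $F_\varepsilon(v_n)\psi$ with $F_\varepsilon(v_n)=1-\tfrac1\varepsilon T_\varepsilon(G_1(v_n))$) produces $\int_{\{v_n\le1\}} u_n^{r+1+\gamma}\,dx\le C\int_\Omega f\,dx\le C\|f\|_{L^m}$ directly, because $T_1(v_n)\le1$. Without the truncation, this piece of your plan does not close.

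Second, on $\{v_n\ge1\}$: you correctly derive $\int_{\{v_n\ge1\}} u_n^{r+1+\gamma}\,dx\le\int_\Omega f u_n^2\,dx$ by testing against $u_n^{2+\gamma}$, but your route to absorb $\int f u_n^2$ — Hölder to $\|f\|_{L^m}\|u_n\|_{L^{2m'}}^2$, then Sobolev on $u_n^{1+\gamma}$, then interpolation, then Young — is left as a program with the scaling explicitly unchecked (``the technical core of the argument; once verified...''). The paper's absorption is much more elementary and does not pass through $\|u_n\|_{L^{2m'}}$ at all: it splits $\Omega$ at the level set $E=\{f\le\tfrac12 u_n^{r-1+\gamma}\}$, so that on $E$ one has pointwise $f u_n^2\le\tfrac12 u_n^{r+1+\gamma}$, while on $\Omega\setminus E$ one has $u_n<(2f)^{1/(r-1+\gamma)}$ and thus $f u_n^2\le C f^{1+2/(r-1+\gamma)}$, which is integrable by the standing assumption on $m$ and bounded by a power of $\|f\|_{L^m}$. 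The first piece is then absorbed into the left-hand side. This is a pointwise Young inequality, and it is where the hypothesis $m\ge(r+\gamma)'$ enters; the supplementary hypothesis $r\ge\tfrac{d}{d-2}-\gamma$ is used earlier, in the proof of Theorem \ref{regularizing outside dual space}, to guarantee that the approximating solutions converge a.e.\ to a finite-energy solution. Your outline attributes both roles to a chain of interpolations that is never verified.

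In short: the decomposition at $\{v_n=1\}$ and the cancellation idea match the paper, but you need the $T_1$ truncation in the test function (not the bare $v_n$), and the final absorption of $\int fu_n^2$ should be done pointwise via the set $E$ rather than via the unverified Hölder–Sobolev–interpolation chain.
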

\begin{proof}
We have
\begin{equation}\label{splitting last theorem}
\int\limits_\Omega u_n^{r+1+\gamma}\dex=\int\limits_{\{v_n\le1\}} u_n^{r+1+\gamma}\dex+\int\limits_{\{v_n\ge1\}} u_n^{r+1+\gamma}\dex.
\end{equation}
Let us treat the two integrals in the above formula separately. Concerning the fist one, by \eqref{inequality for second equation} we have
\begin{align}\label{step first bound last theorem}
\int\limits_{\{v_n\le1\}} u_n^{r+1+\gamma}\dex\le 2^\theta\int\limits_{\{v_n\le1\}} \frac{u_n^{r+1+\gamma}}{\big(v_n+\frac{1}{n}\big)^\theta}\dex\le 2^\theta\int\limits_\Omega A D  T_1(v_n)\cdot  D  u_n (\gamma+1) u_n^\gamma\dex.
\end{align}
Pick $\varepsilon>0$ and choose $T_1(v_n) (u_n+\varepsilon)^\gamma$ as test function in \eqref{appr eq1}. Disregarding the positive terms
$$\gamma\int\limits_\Omega A D  u_n\cdot  D  u_n T_1(v_n) (u_n+\varepsilon)^{\gamma-1}\dex\quad\text{and}\quad\int\limits_\Omega v_n^{1-\theta} u_n^{r-1}T_1(v_n) (u_n+\varepsilon)^{\gamma}\dex,$$
we obtain
$$\gamma\int\limits_\Omega A D  u_n\cdot  D  T_1(v_n)(u_n+\varepsilon)^\gamma\dex\le\int\limits_\Omega \frac{f}{\big(u_n+\frac{1}{n}\big)^\gamma}(u_n+\varepsilon)^\gamma\dex.$$
Letting $\varepsilon\to0$ produces
$$\gamma\int\limits_\Omega A D  u_n\cdot  D  T_1(v_n)u_n^\gamma\dex\le\int\limits_\Omega f\dex,$$
which, together with \eqref{step first bound last theorem}, imply
\begin{equation}\label{first estimate last theorem}
\int\limits_{\{v_n\le1\}} u_n^{r+1+\gamma}\dex\le C\int\limits_\Omega f\dex\le C\|f\|_{L^m}.
\end{equation}
Now we estimate the second integral in \eqref{splitting last theorem}. Testing \eqref{appr eq1} against $u_n^{\gamma+2}$, one has
\begin{equation}\label{step second estimate last theorem}
\int\limits_{\{v_n\ge1\}}u_n^{r+1+\gamma}\dex\le \int\limits_\Omega v_n^{1-\theta} u_n^{r+1+\gamma}\dex\le\int\limits_\Omega f u_n^2\dex.
\end{equation}
Set $E\Def\big\{f\le \medmath{\frac{1}{2}}u_n^{r-1+\gamma}\big\}$, we obtain
$$\int\limits_\Omega f u_n^2\dex=\int\limits_E f u_n^2\dex + \int\limits_{\Omega\setminus E} f u_n^2\dex\le \int\limits_\Omega \frac{1}{2}u_n^{r+1+\gamma}+Cf^{ (r+\gamma)' }\dex.$$
Thanks to our (key) assumption $m\ge (r+\gamma)' $,
$$\bigg(\int\limits_\Omega f^{ (r+\gamma)' }\dex\bigg)^{\frac{1}{(r+\gamma)'}}\le C\|f\|_{L^m},$$
hence
$$\int\limits_\Omega f u_n^2\dex\le \frac{1}{2}\int\limits_\Omega u_n^{r+1+\gamma}\dex+C\|f\|_{L^m}^{ (r+\gamma)' },$$
and by \eqref{step second estimate last theorem}
\begin{equation}\label{second estimate last theorem}
\int\limits_{\{v_n\ge1\}}u_n^{r+1+\gamma}\dex\le\frac{1}{2}\int\limits_\Omega u_n^{r+1+\gamma}\dex+C\|f\|_{L^m}^{ (r+\gamma)' }.
\end{equation}
Finally, putting \eqref{splitting last theorem}, \eqref{first estimate last theorem} and \eqref{second estimate last theorem} together we obtain
$$\int\limits_\Omega u_n^{r+1+\gamma}\dex\le C\|f\|_{L^m}+\frac{1}{2}\int\limits_\Omega u_n^{r+1+\gamma}\dex+C\|f\|_{L^m}^{ (r+\gamma)' },$$
and the conclusion follows.
\end{proof}
\section*{Acknowledgments}
The author is a member of GNAMPA of INdAM and is supported by University of Florence, University of Perugia, and INdAM. This paper is based on the author master thesis, that he wrote as a student at University of Catania. The author warmly thanks Salvatore Angelo Marano and Sunra Mosconi for many stimulating discussions, and for their valuable comments about the content of the paper and its organization. Finally, the author would also like to acknowledge Marco Picerni for a fruitful discussion regarding the positivity of solutions.
\printbibliography
\end{document}